\newtheorem{theorem}{Theorem}[section]
\newtheorem*{theorem*}{Theorem}
\newtheorem{definition}{Definition}[section]
\newtheorem{corollary}{Corollary}[section]
\newtheorem{lemma}{Lemma}[section]
\newtheorem{proposition}{Proposition}[section]
\theoremstyle{definition}
\newtheorem{remark}{Remark}[section]
\newcommand{\C}{\mathbb C}
\newcommand{\R}{\mathbb R}
\newcommand{\N}{\mathbb N}
\newcommand{\calC}{\mathcal C}
\title{Iterative Methods for Globally Lipschitz Nonlinear Laplace Equations} 
\author{Jie Xu\thanks{Email Address: xujie@bu.edu}}
\affil{Boston University, Department of Mathematics and Statistics, Boston, Massachusetts, U.S.A.}
\date{}
\begin{document}
\maketitle
\noindent \begin{abstract} We introduce an iterative method to prove the existence and uniqueness of the complex-valued nonlinear elliptic PDE of the form $ -\Delta u + F(u) = f $ with Dirichlet or Neumann boundary conditions on a precompact domain $ \Omega \subset \mathbb{R}^{n}$, where $ F : \mathbb{C} \rightarrow \mathbb{C} $ is Lipschitz.  The same method gives a solution to 
$ - \Delta_{g} u + F(u) = f $ for these boundary conditions on a smooth, compact Riemannian manifold $ (M, g) $ with $ \mathcal{C}^{1} $ boundary, where $ - \Delta_{g} $ is the Laplace-Beltrami operator. We also apply parametrix methods to discuss an integral version of these PDEs.
\end{abstract}

\noindent \section{Introduction}
Let $ (M, g) $ be a smooth, compact Riemannian manifold with $ \mathcal{C}^{1} $ boundary $ \partial M $. We 
introduce an iterative method to solve the nonlinear elliptic PDE of the form
\begin{equation}\label{one}
-\Delta_{g} u + F(u) = f, \  u : M \rightarrow \mathbb{C},
\end{equation}
where $-\Delta_g$ is the Laplace-Beltrami operator on $M$.
Here $ F: \mathbb{C} \rightarrow \mathbb{C} $ is a globally Lipschitz function, and  $ f : \Omega \rightarrow \mathbb{C} $ is sufficiently regular. We can take either Dirichlet or Neumann boundary conditions. To introduce the iterative method, we first work on a domain, {\it i.e.}, an open, bounded subset $ \Omega \subset \mathbb{R}^{n} $ with $ \mathcal{C}^{1} $ boundary, and then extend the results to $ (M, g) $. \\
\\
\noindent We now state the general form of the main theorem. We will discuss various versions of this theorem with appropriate choices of Hilbert spaces $G, G'$ of functions in the paper.
\begin{theorem*} Let $ (M, g) $ be a smooth, compact Riemannian manifold with $ \mathcal{C}^{1} $ boundary $ \partial M $, and choose a symmetric boundary condition $B:\partial M\to \C$ for  $-\Delta$ acting on a Hilbert space of functions $G$. Let $ C $ be the optimal Poincar\'e constant on $ M $ for the operator $-\Delta$ 
acting on $ G $. Assume that $ F : \mathbb{C} \rightarrow \mathbb{C} $ is a globally Lipschitz function with Lipschitz constant $ C_{1} $ such that $ C_{1} C^{2} < 1 $. Then the Cauchy  problem for the nonlinear elliptic equation
\begin{equation*}
-\Delta u + F(u) = f \; \text{in} \; M, Bu = 0 \; \text{on} \; \partial M
\end{equation*}
 has a  weak solution $ u \in G $, provided that $ f$ is in the Hilbert space $G^{'} $ 
 associated to $G$. 
\end{theorem*}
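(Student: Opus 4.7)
The natural plan is a Picard-type iteration. Starting from $u_0 \in G$ (for instance $u_0 = 0$, or the unique solution of $-\Delta u_0 = f$ with $B u_0 = 0$), I would define $u_{n+1}$ inductively as the weak solution of the \emph{linear} boundary value problem
\begin{equation*}
-\Delta u_{n+1} = f - F(u_n) \ \text{in } M, \qquad B u_{n+1} = 0 \ \text{on } \partial M.
\end{equation*}
Because $F$ is globally Lipschitz, $|F(z)| \le |F(0)| + C_1|z|$, so if $u_n \in G$ then $F(u_n) \in L^2(M)$ (hence in $G'$ after composing with the natural inclusion), and the right-hand side lies in the data space on which the symmetric boundary value problem for $-\Delta$ is well-posed. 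The map $T : u_n \mapsto u_{n+1}$ is therefore well-defined on $G$, using the standard theory of symmetric boundary conditions that I assume is set up earlier in the paper.

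The heart of the argument is to show that $T$ is a contraction in the natural norm of $G$. Setting $v_n := u_{n+1} - u_n$ and subtracting successive iterates, $v_n$ satisfies $-\Delta v_n = -(F(u_n) - F(u_{n-1}))$ with homogeneous boundary condition $B v_n = 0$. Testing the weak formulation against $v_n$ itself gives
\begin{equation*}
\|\nabla v_n\|_{L^2}^{2} = -\langle F(u_n) - F(u_{n-1}), v_n \rangle_{L^2} \le C_1 \, \|v_{n-1}\|_{L^2} \, \|v_n\|_{L^2},
\end{equation*}
by Cauchy--Schwarz and the Lipschitz hypothesis. Applying the Poincar\'e inequality $\|w\|_{L^2} \le C \|\nabla w\|_{L^2}$ (valid on $G$ with optimal constant $C$) to $v_n$ on the right and then to $v_{n-1}$ yields $\|\nabla v_n\|_{L^2} \le C_1 C^2 \|\nabla v_{n-1}\|_{L^2}$, and analogously $\|v_n\|_{L^2} \le C_1 C^2 \|v_{n-1}\|_{L^2}$. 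Since $C_1 C^2 < 1$ by hypothesis, the sequence $\{u_n\}$ is Cauchy in $G$ and converges to some $u \in G$.

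The final step is to pass to the limit in the weak formulation. For every test function $\varphi$ in the appropriate space, the identity
\begin{equation*}
\int_M \nabla u_{n+1} \cdot \overline{\nabla \varphi} \, dV_g + \int_M F(u_n) \overline{\varphi}\, dV_g = \int_M f \overline{\varphi} \, dV_g
\end{equation*}
(together with the boundary terms dictated by $B$) holds for each $n$. Strong convergence $u_n \to u$ in $L^2$ plus the Lipschitz bound gives $F(u_n) \to F(u)$ in $L^2$, while convergence of $u_{n+1}$ in the gradient norm lets me pass to the limit in the Dirichlet term. The limiting identity is exactly the weak form of $-\Delta u + F(u) = f$ with $B u = 0$.

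The main obstacle I anticipate is not the contraction estimate itself, which is essentially mechanical once the Poincar\'e constant is named, but rather the careful bookkeeping of what ``symmetric boundary condition'' and the associated Hilbert spaces $G$, $G'$ mean uniformly: one has to ensure that the linear solver used to define $T$ actually returns an element of $G$ for every choice of $B$ (Dirichlet, Neumann on a mean-zero subspace, or mixed), that $F(u_n)$ sits in $G'$, and that the Poincar\'e inequality used in the contraction bound is genuinely the one that controls the norm on $G$ in which convergence is claimed. With those spaces pinned down, the above three steps yield existence; uniqueness follows from the same contraction applied to the difference of two putative solutions.
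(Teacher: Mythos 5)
Your proposal is correct and follows essentially the same route as the paper: iteratively solve the linear problem $-\Delta u_{k} = f - F(u_{k-1})$ with homogeneous boundary condition, test the difference of consecutive iterates against itself, and combine the Lipschitz bound with the Poincar\'e inequality to get the contraction factor $C_1C^2<1$, then pass to the limit in the weak formulation (with uniqueness from the same estimate applied to two solutions). The bookkeeping you flag about pinning down $G$, $G'$ and the boundary condition is precisely how the paper proceeds, proving the statement case by case (Dirichlet on domains, Dirichlet and Neumann on manifolds, with a separately established Poincar\'e inequality on the mean-zero space in the Neumann case).
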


The existence of solutions of (\ref{one}) is obtained by an iterative method: We first construct a sequence of solutions of linear Poisson equations associated with (\ref{one}), and  prove that the sequence converges strongly in some appropriate norm. We then prove that the limit of the sequence solves (\ref{one}). This argument relies heavily on the analysis of the optimal constant in the  Poincar\'e inequality on $ \Omega $. We 
also introduce a parametrix method in Section 4, which provides a solution to an integrated version of  (\ref{one}). 
\\
\\
Our approach differs from the usual method of linearizing the PDE and 
 then perturbing the linearized solution to a solution of the original nonlinear PDE.  This approach 
 involves some  restrictions, either to sufficiently small boundary conditions or a sufficiently small domain within 
 an appropriate energy space, so that the increase of inhomogeneity as well as nonlinearity can be controlled. 
 In contrast, we only assume a global Lipschitz condition for the nonlinear term.
 Since we work on standard $ L^{2} $-based Sobolev spaces,  we anticipate further applications of our method. \\

\noindent In Section 2, we prove (Theorem 2.1) the existence and uniqueness of solutions of (\ref{one}),
where $ M  = \Omega \subset \mathbb{R}^{n} $ is a bounded domain,  $ g $ is the standard Euclidean metric, and the boundary operator is $ Bu = u $ ({\it i.e.,} Dirichlet conditions). The Hilbert spaces $ G $ and $ G^{'} $ are standard Sobolev spaces $ H_{0}^{1}(\Omega) $ and $ L^{2}(\Omega) $, which are defined in this section. We also prover the existence and uniqueness of solutions of some variants of (\ref{one}). \\
\\
In Section 3, we solve the Dirichlet problem on compact manifolds $(M,g)$ with boundary (Theorem 3.4), with the boundary operators and the manifold analogues of $ G = H_{0}^{1}(M)$ and $ G^{'} =L^{2}(M)$.
We also treat the Neumann problem on $(M,g)$ in Theorem 3.7, where the boundary operator is $ Bu = g(\nabla u, N)$ (the vanishing of the normal derivative), and  the Hilbert spaces $ G =H_{\perp}^{1}(M) $ and $ G^{'} = L_{\perp}^{2}(M)$ are Sobolev spaces defined in this section. The Neumann case is significantly more difficult than the Dirichlet case. We also discuss the difficulties of solving the nonlinear equation on closed manifolds.\\
\\
In Section 4, we introduce a parametrix method for both the Euclidean and manifold cases. We show that the parametrix method yields a new integral expression for the solution of nonlinear elliptic PDEs of the type we consider, and we derive energy estimates. \\
\\
Our approach is inspired by the work of 
Hintz-Vasy \cite{Hintz}, \cite{HVasy}, who applied an iterative method to solve the small data quasilinear wave equation $ \Box_{g(u)} u = f + q(u, du) $ in asymptotic de Sitter space,
 where $ g(0) = g_{dS} $ is the de Sitter metric, for appropriate domain and solution space. \\
\\
Of course, many mathematicians have considered nonlinear elliptic equations using non-iterative methods. We just mention a few examples. Evans \cite{Evans} and Ekeland-T\'emam \cite{Temam} treat real solutions of fully nonlinear second order elliptic PDE with a convexity condition on the nonlinear terms. Fixed point methods, like the Schauder's/Larey-Schaefer's fixed point theorems, play a prominent role in solving semilinear and quasilinear elliptic PDEs, see for instance Evans \cite{Evans}, Gilbarg-Trudinger \cite{GT}. In addition, the inverse function theorems on Banach spaces is used, for example in \cite{Aubin}, \cite{Don}, to solve sesquilinear PDEs $ Pu + F(u) = f $, where $ P $ is an appropriate 
differential operator and $ F $ is a $ \mathcal{C}^{1} $ Fr\'echet differentiable map between Banach spaces. (These PDEs are typically solved for real valued functions, but we can obtain the local solvability of complex-valued  PDEs with the same  nonlinearities by fixed point and inverse function theorem methods.) There are other approaches to the
uniqueness of solutions of nonlinear PDEs \cite{Evans}, \cite{GT}, since  fixed point and other methods may not guarantee the uniqueness of the fixed point, except in rare situations where an appropriate Banach space map is a contraction. In \cite{Stahn}, the existence and uniqueness is proven for the solution of a quasilinear second order elliptic PDE with a real-valued Caratheodory nonlinear function with both coerciveness and growth conditions. Recently, Caffarelli {\it et al.} \cite{CGGKV} introduced powerful methods in nonlinear PDEs, such as the De Giorgi Theorem in elliptic and parabolic PDEs, the concentration-compactness rigidity method in dispersive equations, etc. \\
\\
\noindent Iterative methods have a long history as well, as they can be traced back to Newton's method to solve nonlinear problems. On nonlinear elliptic PDE, iterative methods have traditionally been applied to solving discretized versions of elliptic PDE using numerical approximation. For continuous methods, there is a large body of work, summarized in \cite{Evans}, applying iterative methods to real-valued nonlinear elliptic PDEs by assuming the existence of subsolutions and supersolutions. However, Evans states: "Warning: ...A plausible plan for constructively solving a quasilinear elliptic PDE would be to select some $ u_{0} $ and then iteratively solve the linear boundary-value problems...However, we CANNOT assert that $ \lbrace u_{k} \rbrace_{k = 0}^{\infty} $ then converges to a solution...Schauder's and Schaefer's fixed point theorems do not say that any sequence converges to a fixed point." In another direction, Nash \cite{Nash} modified Newton's method in his proof of the isometric embedding theorem. This work developed  into the so-called Nash-Moser method \cite{Moser1}, \cite{Moser2} to solve nonlinear equations iteratively, with the drawback that typically some regularity is lost at each iteration. Later H\"ormandar \cite{Hor1} improved Moser's method to reduce the loss of regularity, which is actually closer to Nash's original approach. 

\noindent \section{The Dirichlet Problem for Domains in Euclidean Space} 
In this section, we consider the Dirichlet problem for the basic equation (\ref{one})
on a connected bounded domain $\Omega\subset\R^n$ with $\calC^1$ boundary.  Thus we solve $-\Delta u + F(u) = f$, provided  $ f \in L^{2}(\Omega) $ and $ F : \mathbb{C} \rightarrow \mathbb{C} $ is globally Lipschitz. (An example of a non-differentiable globally Lipschitz function  is $ F(z) = e^{i\Im z} $, which has global Lipschitz constant $ \sqrt{2} $.)Here $ -\Delta $ is the standard Laplacian with strictly positive eigenvalues. The main existence and uniqueness result is Theorem 2.1. We also consider some variations of (\ref{one}) in this section.

\subsection{The Main Result}
\noindent In this subsection, we prove the existence and uniqueness of solutions of (\ref{one}).
\begin{definition} \label{Def 2.1} Let $ \Omega \subset \mathbb{R}^{n} $ be a connected bounded domain.
\\
(i) The $L^2$ inner product $ \langle \cdot, \cdot \rangle_{L^{2}} $ on $ \mathcal{C}_{c}^{\infty}(\Omega,\C) $ is
\begin{equation*}
\langle f, h \rangle_{L^{2}} = \int_{\Omega} f \bar{h} dx.
\end{equation*}
\noindent $ L^{2}(\Omega) $ is the completion of $ \mathcal{C}_{c}^{\infty} (\Omega,\C)$ with respect to $ \langle \cdot, \cdot \rangle_{L^{2}} $. The norm on $ L^{2}(\Omega) $ is thus given by $ \left\lVert f \right\rVert_{L^{2}(\Omega)}^{2} = \langle f, f \rangle_{L^{2}} $. \\
\\
(ii) The energy inner product $ \langle \cdot, \cdot \rangle_{E} $ on $ \mathcal{C}_{c}^{\infty}(\Omega,\C) $ is
\begin{equation*}
\langle f, h \rangle_{E} = \int_{M} \nabla f \cdot \nabla \bar h dx.
\end{equation*}
(iii) The $H^1$ inner product is  $ \langle \cdot, \cdot \rangle_{H^{1}} $ on $ \mathcal{C}^{\infty}(\Omega,
\C) $ or $ \mathcal{C}_{c}^{\infty}(\Omega,\C) $ is
\begin{equation*}
\langle f, h \rangle_{H^{1}} = \langle f, h \rangle_{L^{2}} + \langle f, h \rangle_{E}.
\end{equation*}
\noindent  $ H^{1}(\Omega) $ is the completion of $ \mathcal{C}^{\infty}(\Omega,\C) $ 
with respect to $ \langle \cdot, \cdot \rangle_{H^{1}} $,
 and $H^1_0(\Omega)  $ is the completion of $ \mathcal{C}^{\infty}_c(\Omega,\C) $ with respect to $ \langle \cdot, \cdot \rangle_{H^{1}} $. $H_0^1(\Omega)$ is the space of weakly differentiable functions on $\Omega$ that vanish on $\partial \Omega.$ \\
 \\
(iv) In general, the integer-valued Sobolev spaces $ H^{k}(\Omega) $, $ k > 1 $ is defined inductively by requiring that
\begin{equation*}
    u \in H^{k}(\Omega) \Leftrightarrow \frac{\partial u}{\partial x_{j}} \in H^{k-1}(\Omega), j = 1, \dotso, n.
\end{equation*}
\noindent Here $ x_{1}, \dotso, x_{n} $ are coordinate system on $ \Omega \subset \mathbb{R}^{n} $. The norm $ \lVert \cdot \rVert_{H^{k}(\Omega)} $ is given by
\begin{equation*}
    \lVert u \rVert_{H^{k}(\Omega)} = \lVert u \rVert_{H^{k-1}(\Omega)} + \sum_{j=1}^{n} \left\lVert \frac{\partial u}{\partial x_{j}} \right\rVert_{H^{k-1}(\Omega)}.
\end{equation*}
\end{definition}
\noindent We first prove a result about the  optimal Poincar\'e constant for $ u \in H_{0}^{1}(\Omega) $, or equivalently for $u$ with Dirichlet boundary condition.  We use the well-known fact that $-\Delta$ on $H^1_0(\Omega)$ has discrete spectrum contained in $\R^+$. See for example \cite[Ch. 28, 31, 32]{Lax}
\begin{proposition} \label{Prop 2.1} (Poincar\'e inequality) Let  $ \lambda_{1} $ be the smallest eigenvalue of $-\Delta$  on $H_0^1(\Omega) $. For  $ u \in H_{0}^{1}(\Omega) $, we have
\begin{equation*}\label{P.cons.}
\left\lvert u \right\rvert_{L^{2}(\Omega)} \leqslant \lambda_{1}^{-1 \slash 2} \left\lvert \nabla u \right\rvert_{L^{2}(\Omega)}.
\end{equation*}
Moreover, $\lambda_1^{-1/2}$ is the maximal constant for which this estimate holds.
\end{proposition}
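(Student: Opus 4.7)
The plan is to use the spectral decomposition of $-\Delta$ on $H^1_0(\Omega)$ in a direct way. By the cited fact from Lax, $-\Delta$ with Dirichlet conditions has a discrete positive spectrum $0 < \lambda_1 \leq \lambda_2 \leq \cdots \to \infty$ with an associated orthonormal basis $\{\phi_k\}_{k\geq 1}$ of $L^2(\Omega)$ consisting of eigenfunctions $-\Delta \phi_k = \lambda_k \phi_k$, each $\phi_k \in H^1_0(\Omega)$.

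First I would record the basic energy identity. For $u \in \mathcal{C}_c^\infty(\Omega)$, integration by parts gives $\langle -\Delta u, u\rangle_{L^2} = \langle u, u\rangle_E = \lVert \nabla u\rVert_{L^2}^2$. By density of $\mathcal{C}_c^\infty(\Omega)$ in $H^1_0(\Omega)$, and continuity of both $\langle\cdot,\cdot\rangle_E$ and $\lVert\cdot\rVert_{L^2}$ on $H^1_0(\Omega)$, this extends to all $u\in H^1_0(\Omega)$ as $\langle u,u\rangle_E = \lVert \nabla u\rVert_{L^2}^2$. Also, the eigenfunctions $\{\phi_k\}$ are orthogonal with respect to $\langle\cdot,\cdot\rangle_E$, since $\langle \phi_j,\phi_k\rangle_E = \langle -\Delta\phi_j,\phi_k\rangle_{L^2} = \lambda_j \delta_{jk}$, and in particular $\lVert \nabla\phi_k\rVert_{L^2}^2 = \lambda_k$.

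Next I would expand $u \in H^1_0(\Omega)$ in the basis $\{\phi_k\}$: write $c_k = \langle u,\phi_k\rangle_{L^2}$ and $u = \sum_k c_k\phi_k$ with $\lVert u\rVert_{L^2}^2 = \sum_k |c_k|^2$. I need to identify $\lVert \nabla u\rVert_{L^2}^2$ with $\sum_k \lambda_k|c_k|^2$. Considering the partial sums $u_N = \sum_{k\leq N} c_k\phi_k$, orthogonality in the energy inner product gives $\lVert \nabla u_N\rVert_{L^2}^2 = \sum_{k\leq N}\lambda_k|c_k|^2$. Since $u \in H^1_0(\Omega)$, the sequence $\lVert \nabla u_N\rVert_{L^2}^2$ is uniformly bounded (by $\lVert \nabla u\rVert_{L^2}^2$, as the $\phi_k$ are orthogonal projections onto closed subspaces of $H^1_0$ under $\langle\cdot,\cdot\rangle_E$), so $\sum_k\lambda_k|c_k|^2 < \infty$ and in fact equals $\lVert\nabla u\rVert_{L^2}^2$. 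Then the estimate is immediate:
\begin{equation*}
\lVert \nabla u\rVert_{L^2}^2 = \sum_k \lambda_k|c_k|^2 \geq \lambda_1 \sum_k |c_k|^2 = \lambda_1 \lVert u\rVert_{L^2}^2,
\end{equation*}
which rearranges to $\lVert u\rVert_{L^2} \leq \lambda_1^{-1/2}\lVert \nabla u\rVert_{L^2}$.

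For optimality I would simply plug $u = \phi_1$: this gives $\lVert \nabla \phi_1\rVert_{L^2}^2 = \lambda_1 = \lambda_1 \lVert \phi_1\rVert_{L^2}^2$, so the inequality is saturated and no smaller constant $C < \lambda_1^{-1/2}$ can work. The main technical point to be careful about is the simultaneous convergence of the eigenfunction expansion in both the $L^2$ and energy norms on $H^1_0(\Omega)$; this is what legitimizes passing from the finite-sum identity to the Parseval-type identity for $\lVert\nabla u\rVert_{L^2}^2$, and it follows from the standard fact that $\{\lambda_k^{-1/2}\phi_k\}$ forms an orthonormal basis of $(H^1_0(\Omega),\langle\cdot,\cdot\rangle_E)$.
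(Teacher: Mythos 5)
Your argument is correct and amounts to the same variational/spectral characterization of $\lambda_1$ that the paper relies on: the paper simply cites \cite[Ch.~7]{GT} for the inequality and notes that the optimal constant comes from minimizing the Rayleigh quotient, which is exactly the content your eigenfunction-expansion (Parseval) argument establishes, with optimality saturated by $u=\phi_1$. So this is a fleshed-out version of the paper's proof rather than a different route.
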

\begin{proof} See \cite[Ch. 7]{GT}. The optimal constant is easily obtained by minimizing the Rayleigh quotient.
\end{proof}
\noindent Now we can prove our main result.  The proof introduces the iterative method 
that is used throughout the paper. 

\begin{theorem}\label{Thm 2.1} Let $ \Omega \subset \mathbb{R}^{n} $ be a connected bounded domain with $ \mathcal{C}^{1} $ boundary. Let $ C = \lambda_{1}^{-1\slash 2} $ be the optimal Poincar\'e constant on $ \Omega $, as in Proposition \ref{Prop 2.1}. Assume that $ F : \mathbb{C} \rightarrow \mathbb{C} $ is a globally Lipschitz function with Lipschitz constant $ C_{1} $ satisfying $ C_{1} C^{2} < 1 $, and choose $ f \in L^{2}(\Omega) $. Then the Dirichlet problem for the nonlinear elliptic equation
\begin{equation}\label{Thm 2.1 PDE}
-\Delta u + F(u) = f \; \text{in} \; \Omega, u |_{\partial \Omega} = 0,
\end{equation}
\noindent has a unique weak solution $ u \in H_{0}^{1}(\Omega) $.
\end{theorem}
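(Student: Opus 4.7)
The plan is to set up a Picard-type iteration solving a sequence of linear Poisson problems, show it is a contraction in the energy norm under the hypothesis $C_1 C^2 < 1$, and pass to the limit. Specifically, fix $u_0 \equiv 0$ and define $u_{k+1}$ inductively as the unique weak solution in $H^1_0(\Omega)$ of the linear Dirichlet problem
\begin{equation*}
-\Delta u_{k+1} = f - F(u_k) \text{ in } \Omega, \qquad u_{k+1}\big|_{\partial\Omega} = 0.
\end{equation*}
Since $F$ is Lipschitz, $|F(z)| \le C_1 |z| + |F(0)|$, so whenever $u_k \in L^2(\Omega)$ the right-hand side lies in $L^2(\Omega)$ and existence/uniqueness of $u_{k+1} \in H^1_0(\Omega)$ follows from Lax--Milgram applied to the energy inner product (which is coercive on $H^1_0$ by Proposition \ref{Prop 2.1}).

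Next I would establish the crucial contraction estimate. Subtracting the equations for $u_{k+1}$ and $u_k$, the difference $w_k := u_{k+1} - u_k$ weakly satisfies $-\Delta w_k = F(u_{k-1}) - F(u_k)$ with zero boundary data. Testing against $w_k$ itself and integrating by parts gives
\begin{equation*}
\|w_k\|_E^2 = \langle F(u_{k-1}) - F(u_k),\, w_k \rangle_{L^2} \le C_1 \|u_{k-1} - u_k\|_{L^2} \|w_k\|_{L^2}.
\end{equation*}
Applying the Poincar\'e inequality (Proposition \ref{Prop 2.1}) to both $L^2$ norms on the right yields $\|w_k\|_E \le C_1 C^2 \|w_{k-1}\|_E$. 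Because $C_1 C^2 < 1$, the sequence $\{u_k\}$ is Cauchy in the energy norm, hence in $H^1_0(\Omega)$ (where the energy and $H^1$ norms are equivalent by Poincar\'e), and converges to some $u \in H^1_0(\Omega)$.

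To verify that $u$ is a weak solution, I would take the weak formulation
\begin{equation*}
\langle \nabla u_{k+1}, \nabla \varphi \rangle_{L^2} + \langle F(u_k), \varphi \rangle_{L^2} = \langle f, \varphi \rangle_{L^2}
\end{equation*}
for $\varphi \in \mathcal{C}_c^\infty(\Omega)$, and pass to the limit $k \to \infty$. The energy term converges from $H^1$-convergence; the nonlinear term converges because Lipschitz continuity gives $\|F(u_k) - F(u)\|_{L^2} \le C_1 \|u_k - u\|_{L^2} \to 0$. This produces a weak solution $u \in H^1_0(\Omega)$ of (\ref{Thm 2.1 PDE}).

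Uniqueness will follow from essentially the same contraction argument: if $u, v$ are two weak solutions, then $-\Delta(u-v) = F(v) - F(u)$ weakly with zero boundary values, so testing against $u-v$ and applying the Lipschitz bound together with Poincar\'e twice gives $\|u-v\|_E^2 \le C_1 C^2 \|u-v\|_E^2$, forcing $u=v$. The main technical point is ensuring the contraction factor comes out as exactly $C_1 C^2$ rather than something larger; this is why one must apply Poincar\'e to \emph{both} factors $\|u_{k-1} - u_k\|_{L^2}$ and $\|w_k\|_{L^2}$ on the right-hand side of the energy identity rather than using Cauchy--Schwarz in $H^1$, and it is what makes the sharpness of the Poincar\'e constant in Proposition \ref{Prop 2.1} genuinely necessary for the stated hypothesis.
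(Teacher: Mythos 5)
Your proposal is correct and follows essentially the same route as the paper: iterate linear Dirichlet problems, obtain a contraction in the energy norm from the Lipschitz bound plus the sharp Poincar\'e constant (your factor $C_1C^2$ versus the paper's equivalent $\sqrt{C_1C^2}$), pass to the limit in the weak formulation, and prove uniqueness by the same testing argument. The only cosmetic differences are that you start from $u_0\equiv 0$ and stay entirely in the weak $H_0^1$ framework instead of invoking $H^2$ regularity at each step, which the paper itself notes (Remark 2.1) is equally valid.
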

\begin{proof} We may assume $ F(0) = 0 $, since if $ F(0) = c \neq 0 $, then we replace $F$ with 
 $ \tilde{F} = F - c $. It is clear that $ \tilde{F} $ is still globally Lipschitz.\\
\\
We first prove existence using our iterative set-up. We want to construct a sequence $ \lbrace u_{k} \rbrace_{k = 0}^{\infty} \subset H^2(\Omega)$ such that
\begin{align} 
 -\Delta u_{0} &= f \; in \; \Omega, u_{0} = 0 \; on \; \partial \Omega; \label{Thm. 2.1 strong-iteration1}\\
 -\Delta u_{k} + F(u_{k-1}) &= f \; in \; \Omega, u_{k} = 0 \; on \; \partial \Omega, k = 1, 2, 3, \dotso. \label{Thm. 2.1 strong-iteration2}
\end{align}
\noindent According to standard linear elliptic theory \cite{Evans2}, (\ref{Thm. 2.1 strong-iteration1}) has a unique solution with $ u_{0} \in H^{2}(\Omega) $; this uses the  Poincar\'e inequality, the Lax-Milgram theorem, and the regularity of the Poisson equation. In addition, we have $ u_{0} \in H_{0}^{1}(\Omega) $ in the trace class sense, so $ u_{0} \in H_{0}^{1}(\Omega) \cap H^{2}(\Omega) $.  \\
\\
Observe that for each $ k \geqslant 1 $ in (\ref{Thm. 2.1 strong-iteration2}), if we know that $ u_{k - 1} \in H^{2}(\Omega) \cap H_{0}^{1}(\Omega) $, then by the Lipschitz property of $ F $, we have
\begin{align}\label{Thm 2.1 Lip-esti}
\left\lVert F(u_{k-1}) \right\rVert_{L^{2}(\Omega)} & = \left\lVert F(u_{k-1}) - F(0) \right\rVert_{L^{2}(\Omega)} = \int_{\Omega} \left\lvert F(u_{k-1}) - F(0) \right\rvert^{2} dx \nonumber \\
& \leqslant \int_{\Omega} C_{1}^{2} \left\lvert u_{k-1} \right\rvert^{2} dx = C_{1}^{2} \left\lVert u_{k-1} \right\rVert_{L^{2}(\Omega)} \nonumber \\
& \leqslant C_{1}^{2} \left\lVert u_{k-1} \right\rVert_{H^{2}(\Omega)}.
\end{align}
\noindent Therefore $ u_{k-1} \in H^{2}(\Omega) $ implies $ F(u_{k-1}) \in L^{2}(\Omega) $.
Thus we can rewrite (4) as
\begin{equation}
-\Delta u_{k} = - F(u_{k-1}) + f \; in \; \Omega, u = 0 \; on \; \partial \Omega, 
\end{equation}
where the right hand side is in $L^2(\Omega).$
\noindent Standard linear elliptic theory implies that (6) has a unique solution $ u_{k} \in H_{0}^{1}(\Omega) \cap H^{2}(\Omega)$ for each $k$.\\
\\
We want to show that there exists $ u \in H_{0}^{1}(\Omega) $ such that we have the strong convergence
\begin{equation}
u = \lim_{k \rightarrow \infty} u_{k} \; \text{in} \: H_{0}^{1}(\Omega).
\end{equation}
To do this, we take consecutive iterations of  (4): 
\begin{align}
 -\Delta u_{k} + F(u_{k-1}) &= f \; in \; \Omega, u = 0 \; on \; \partial \Omega; \\
 -\Delta u_{k+1} + F(u_{k}) &= f \; in \; \Omega, u = 0 \; on \; \partial \Omega,
\end{align}
\noindent Subtracting (8) from (9),  we obtain
\begin{equation}
-\Delta (u_{k+1} - u_{k}) = -F(u_{k}) + F(u_{k-1}).
\end{equation}
\noindent Since $ u_{k} \in H_{0}^{1}(\Omega) \cap H^{2}(\Omega) $, we can introduce the weak version of (10):
\begin{equation}
\int_{\Omega} (\nabla u_{k+1} - \nabla u_{k}) \cdot \nabla v dx = - \int_{\Omega} (F(u_{k}) - F(u_{k-1})) \cdot v dx,
\end{equation}
for all $ v \in H_{0}^{1}(\Omega) $.
Setting $ v = u_{k+1} - u_{k} \in H_{0}^{1}(\Omega) $, we get:
\begin{equation*}
\int_{\Omega} \left\lvert \nabla u_{k+1} - \nabla u_{k} \right\rvert^{2} dx = - \int_{\Omega} (F(u_{k}) - F(u_{k-1})) \cdot (u_{k+1} - u_{k}) dx.
\end{equation*}
\noindent This gives
\begin{align*} \lefteqn{
 \left\lVert \nabla u_{k+1} - \nabla u_{k} \right\rVert_{L^{2}(\Omega)}^{2}}\\
 & = - \int_{\Omega} (F(u_{k}) - F(u_{k-1})) \cdot (u_{k+1} - u_{k}) dx 
\leqslant  \int_{\Omega} \left\lvert F(u_{k}) - F(u_{k-1}) \right\rvert \cdot \left\lvert u_{k+1} - u_{k} \right\rvert dx\\
&\leqslant \int_{\Omega} C_{1} \left\lvert u_{k} - u_{k-1} \right\rvert \cdot \left\lvert u_{k+1} - u_{k} \right\rvert dx \nonumber 
\leqslant  C_{1} \left\lVert u_{k} - u_{k-1} \right\rVert_{L^{2}(\Omega)} \cdot \left\lVert u_{k+1} - u_{k} \right\rVert_{L^{2}(\Omega)} \\
&\leqslant  C_{1} C^{2} \left\lVert \nabla u_{k} - \nabla u_{k-1} \right\rVert_{L^{2}(\Omega)} \cdot \left\lVert \nabla u_{k+1} - \nabla u_{k} \right\rVert_{L^{2}(\Omega)}. \nonumber 
\end{align*}
\noindent It follows that
\begin{equation}
\left\lVert \nabla u_{k+1} - \nabla u_{k} \right\rVert_{L^{2}(\Omega)} \leqslant \sqrt{C_1C_2^2} \left\lVert \nabla u_{k} - \nabla u_{k-1} \right\rVert_{L^{2}(\Omega)}, \forall k \in \mathbb{N}.
\end{equation}
\noindent Since $C_{1}C^{2}  < 1 $ by hypothesis, we conclude that $ \lbrace \nabla u_{k} \rbrace_{k=0}^{\infty} $ is Cauchy in $ L^{2}(\Omega) $. The Poincar\'e inequality guarantees the equivalence of $ H_{0}^{1} $-norm of $ u $ and $ L^{2} $-norm of $ \nabla  u $, and hence $ \lbrace u_{k} \rbrace_{k=0}^{\infty} $ is a Cauchy sequence in $ H_{0}^{1}(\Omega) $. Since $ H_{0}^{1}(\Omega) $ is a Hilbert space, we conclude that there exists a unique $ u \in H_{0}^{1}(\Omega) $ such that
\begin{equation}
u = \lim_{k \rightarrow \infty} u_{k}\ in\ H_{0}^{1}(\Omega).
\end{equation}
\noindent We now show that the limit in (13) solves (2) in the weak sense. The weak version of the iterative equation (4) for any $ v \in H_{0}^{1}(\Omega) $ is:
\begin{equation}\label{Thm2.1-weak}
\int_{\Omega} \nabla u_{k} \cdot \nabla v dx + \int_{\Omega} F(u_{k-1}) \cdot v dx = \int_{\Omega} f \cdot v dx.
\end{equation}
\noindent Note that since the limit $ u $ is in $ H_{0}^{1}(\Omega) $, the integrals in the left hand side of 
(14)  converge when $ u_{k}, u_{k-1} $ are replaced by $ u $. \\
\\
We can easily estimate the size of 
\begin{equation*}
\int_{\Omega} (\nabla u - \nabla u_{k}) \cdot \nabla v dx + \int_{\Omega} (F(u) - F(u_{k-1})) v dx : = A + B.
\end{equation*}
For $ A $, by Cauchy-Schwarz, we have
\begin{align*}
\left\lvert A \right\rvert & = \left\lvert \int_{\Omega} (\nabla u - \nabla u_{k}) \cdot \nabla v dx \right\rvert \leqslant \left\lVert \nabla u - \nabla u_{k} \right\rVert_{L^{2}(\Omega)} \cdot \left\lVert \nabla v \right\rVert_{L^{2}(\Omega)} \xrightarrow{k \rightarrow \infty} 0.
\end{align*}
\noindent Similarly, we get
\begin{align*}
\left\lvert B \right\rvert & = \left\lvert \int_{\Omega} (F(u) - F(u_{k-1})) v dx \right\rvert \leqslant \int_{\Omega} \left\lvert F(u) - F(u_{k-1}) \right\rvert \cdot \left\lvert v \right\rvert dx \\
& \leqslant C_{1} \int_{\Omega} \left\lvert u - u_{k-1} \right\rvert \cdot \left\lvert v \right\rvert dx \leqslant C_{1} \left\lVert u - u_{k-1} \right\rVert_{L^{2}(\Omega)} \cdot \left\lVert v \right\rVert_{L^{2}(\Omega)} \xrightarrow{j \rightarrow \infty} 0.
\end{align*}
\noindent We conclude using (\ref{Thm2.1-weak}) that
\begin{align*}
& \left\lvert \int_{\Omega} \nabla u \cdot \nabla v dx + \int_{\Omega} F(u) \cdot v dx - \int_{\Omega} f \cdot v dx \right\rvert \\
& \leqslant \left\lvert \int_{\Omega} (\nabla u - \nabla u_{k}) \cdot \nabla v dx + \int_{\Omega} (F(u) - F(u_{k-1})) v dx \right\rvert \\
& \qquad + \left\lvert \int_{\Omega} \nabla u_{k} \cdot \nabla v dx + \int_{\Omega} F(u_{k-1}) \cdot v dx - \int_{\Omega} f \cdot v dx \right\rvert \\
& = \left\lvert \int_{\Omega} (\nabla u - \nabla u_{k}) \cdot \nabla v dx + \int_{\Omega} (F(u) - F(u_{k-1})) v dx \right\rvert.
\end{align*}
\noindent  
The right hand side of the last inequality is arbitrarily small for $k \gg 0$, so
\begin{equation}
\int_{\Omega} \nabla u \cdot \nabla v dx + \int_{\Omega} F(u) \cdot v dx = \int_{\Omega} f \cdot v dx, \forall v \in H_{0}^{1}(\Omega).
\end{equation}
\noindent Thus $u$ solves (\ref{Thm 2.1 PDE}) in the weak sense.\\
\\
To check the boundary condition, since each $ u_{k} $ solves (4) in the weak sense, we obtain by the trace theorem that $ u_{k} |_{\partial \Omega} = Tu_{k} = 0 $ where $ T: H_{0}^{1}(\Omega) \rightarrow L^{2}(\partial \Omega), Tv = v|_{\partial\Omega} $ is the trace operator. By the continuity of $ T $, it follows that $ Tu = 0 $.\\
\\
For the uniqueness of the weak solution, suppose that $ u_{1}, u_{2} $ solve (2) in the weak sense:
\begin{align*}
 \int_{\Omega} \nabla u_{1} \cdot \nabla v dx + \int_{\Omega} F(u_{1}) v dx &= \int_{\Omega} fvdx, \\
 \int_{\Omega} \nabla u_{2} \cdot \nabla v dx + \int_{\Omega} F(u_{2}) v dx &= \int_{\Omega} fvdx.
\end{align*}
\noindent Subtracting, we have
\begin{equation}
\int_{\Omega} (\nabla u_{1} - \nabla u_{2}) \cdot \nabla v dx = \int_{\Omega} (F(u_{2}) - F(u_{1})) v dx.
\end{equation}
\noindent Setting $ v = u_{1} - u_{2} $, we get
\begin{align*}
& \left\lVert \nabla u_{1} - \nabla u_{2} \right\rVert_{L^{2}(\Omega)}^{2} \leqslant \int_{\Omega} \left\lvert F(u_{2}) - F(u_{1}) \right\rvert \left\lvert u_{1} - u_{2} \right\rvert dx \\
 \leqslant & C_{1} \left\lVert u_{1} - u_{2} \right\rVert_{L^{2}(\Omega)}^{2} \leqslant C_{1} C^{2} \left\lVert \nabla u_{1} - \nabla u_{2} \right\rVert_{L^{2}(\Omega)}^{2}.
\end{align*}
\noindent Since $ C_{1}C^{2} < 1 $, we obtain $ \left\lvert \nabla u_{1} - \nabla u_{2} \right\rvert_{L^{2}(\Omega)} = 0 $. It follows that $u_{1} - u_{2} = c,$ for some constant $ c \in \mathbb{C} $. Since $ u_{1} - u_{2} = 0 $ on $ \partial \Omega $, we must have $c=0$, {\it i.e.,} $u_1 = u_2.$
\end{proof}
\begin{remark} 
In this proof, we have used both strong and weak versions of linear and nonlinear elliptic PDEs. We use the strong version as long as possible to show the iterative steps clearly, and then switch to weak version out of necessity. In fact, the iterative steps  (\ref{Thm. 2.1 strong-iteration1}) - (10) work equally well in the weak form, since we can rewrite (\ref{Thm. 2.1 strong-iteration1}), (\ref{Thm. 2.1 strong-iteration2}) for some $ v \in H_{0}^{1}(\Omega) $ as
\begin{align*}
 \int_{\Omega} \nabla u_{0} \cdot \nabla v dx &= \int_{\Omega} fv dx, \\
 \int_{\Omega} \nabla u_{k} \cdot \nabla v dx + \int_{\Omega} F(u_{k-1}) \cdot v dx &= \int_{\Omega} f v dx, k \geqslant 1,
\end{align*}
\noindent and then continue.
\end{remark}

\subsection{More General Nonlinearities and More General Dirichlet Problems}
\noindent In this subsection, we  discuss the iterative method on Euclidean space when $ F $ is a function of both $ u $ and $ \nabla u $. \\
\\
The first result is the existence of a solution of $ -\Delta u + F(u, \nabla u) = f $ when $ F $ is of the simple form $ F(u, \nabla u) = F_{1}(u) + F_{2}(\nabla u) $, where $ F_{1} $ and $ F_{2} $ are globally Lipschitz functions with appropriate Lipschitz constants.
\begin{theorem} Let $ \Omega \subset \mathbb{R}^{n} $ be a connected domain with $ \mathcal{C}^{1} $ boundary. Let $ C = \lambda_{1}^{-1 \slash 2} $ be the optimal Poincar\'e constant on $ \Omega $, as in Proposition \ref{Prop 2.1}. Assume that $ F_{1} : \mathbb{C} \rightarrow \mathbb{C} $, $ F_{2} : \mathbb{C}^{n} \rightarrow \mathbb{C} $ are globally Lipschitz functions with Lipschitz constants $ C_{1}, C_{2} $ such that $ C_{1} C^{2} + C_{2}C < 1 $. Choose $f\in L^2(\Omega).$ Then the Dirichlet problem for the nonlinear elliptic equation
\begin{equation}\label{Thm 2.2 equation}
-\Delta u + F_{1}(u) + F_{2}(\nabla u) = f \; \text{in} \; \Omega, u |_{\partial \Omega} = 0,
\end{equation}
\noindent has a unique weak solution $ u \in H_{0}^{1}(\Omega) $.
\end{theorem}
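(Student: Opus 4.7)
The plan is to mimic the iterative scheme of Theorem 2.1, using the assumption $C_1C^2+C_2C<1$ to get a contractive bound on successive differences in the $H_0^1$-norm.

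First I would normalize by subtracting constants: since $F_1(0)$ and $F_2(0)$ are fixed complex numbers, replacing $F_1,F_2$ by $\tilde F_1=F_1-F_1(0)$, $\tilde F_2=F_2-F_2(0)$ and absorbing the constant $F_1(0)+F_2(0)$ into $f$ (which stays in $L^2(\Omega)$ because $\Omega$ is bounded) reduces to the case $F_1(0)=F_2(0)=0$. Then define the iteration $u_0\in H^2\cap H_0^1$ solving $-\Delta u_0=f$, and for $k\ge1$ let $u_k\in H^2\cap H_0^1$ solve
\begin{equation*}
-\Delta u_k = f - F_1(u_{k-1}) - F_2(\nabla u_{k-1}),\qquad u_k|_{\partial\Omega}=0.
\end{equation*}
At each step the right-hand side lies in $L^2(\Omega)$: by the Lipschitz bound and $F_i(0)=0$, $\|F_1(u_{k-1})\|_{L^2}\le C_1\|u_{k-1}\|_{L^2}$ and $\|F_2(\nabla u_{k-1})\|_{L^2}\le C_2\|\nabla u_{k-1}\|_{L^2}$, both finite since $u_{k-1}\in H^2\cap H_0^1$. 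Standard linear elliptic theory then gives $u_k\in H^2\cap H_0^1$.

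Next I would subtract two consecutive iterates, pass to the weak form against the test function $v=u_{k+1}-u_k\in H_0^1(\Omega)$, and obtain
\begin{equation*}
\|\nabla(u_{k+1}-u_k)\|_{L^2}^2 = -\int_\Omega \bigl(F_1(u_k)-F_1(u_{k-1})\bigr)(u_{k+1}-u_k)\,dx -\int_\Omega \bigl(F_2(\nabla u_k)-F_2(\nabla u_{k-1})\bigr)(u_{k+1}-u_k)\,dx.
\end{equation*}
Applying Cauchy--Schwarz and the Lipschitz conditions bounds the first integral by $C_1\|u_k-u_{k-1}\|_{L^2}\|u_{k+1}-u_k\|_{L^2}$ and the second by $C_2\|\nabla u_k-\nabla u_{k-1}\|_{L^2}\|u_{k+1}-u_k\|_{L^2}$. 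The key observation is that applying Proposition \ref{Prop 2.1} once to each norm involving $u$'s without gradients yields a factor of $C$ for the $F_2$ estimate (not $C^2$), because only one of the two factors needs converting; the $F_1$ estimate still produces $C^2$ as in Theorem \ref{Thm 2.1}. Dividing through by $\|\nabla(u_{k+1}-u_k)\|_{L^2}$ gives
\begin{equation*}
\|\nabla(u_{k+1}-u_k)\|_{L^2} \le (C_1C^2+C_2C)\,\|\nabla(u_k-u_{k-1})\|_{L^2},
\end{equation*}
and by hypothesis the contraction factor is strictly less than $1$, so $\{u_k\}$ is Cauchy in $H_0^1(\Omega)$ and converges to some $u\in H_0^1(\Omega)$.

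Finally I would pass to the limit in the weak formulation $\int\nabla u_k\cdot\nabla v+\int F_1(u_{k-1})v+\int F_2(\nabla u_{k-1})v=\int fv$ for any $v\in H_0^1(\Omega)$. Each of the three terms on the left converges: the gradient term by $L^2$ convergence of $\nabla u_k$, the $F_1$ term by Lipschitz continuity plus $L^2$ convergence of $u_{k-1}$, and the $F_2$ term by Lipschitz continuity plus $L^2$ convergence of $\nabla u_{k-1}$ (using Cauchy--Schwarz against $v$). The trace theorem then gives $u|_{\partial\Omega}=0$ exactly as in Theorem \ref{Thm 2.1}. Uniqueness is handled identically: if $u_1,u_2$ are two weak solutions, subtracting and testing with $v=u_1-u_2$ produces $\|\nabla(u_1-u_2)\|_{L^2}^2\le(C_1C^2+C_2C)\|\nabla(u_1-u_2)\|_{L^2}^2$, forcing $u_1=u_2$. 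The only genuine subtlety, and the place to be careful, is bookkeeping the Poincaré factors so that $F_2$ contributes $C_2 C$ rather than $C_2 C^2$; this is what makes the threshold $C_1C^2+C_2C<1$ the natural one.
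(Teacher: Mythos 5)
Your proposal is correct and follows essentially the same route as the paper: the same iteration with both nonlinearities frozen at the previous step, the same Lipschitz/Poincaré bookkeeping giving the factor $C_{1}C^{2}+C_{2}C$ (one Poincaré application for the $F_{2}$ term, two for the $F_{1}$ term), and the same limit-passing, trace, and uniqueness arguments. The only cosmetic difference is that you state the contraction constant as $C_{1}C^{2}+C_{2}C$ after dividing, while the paper records the (weaker but equally sufficient) factor $\sqrt{C_{1}C^{2}+C_{2}C}$.
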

\begin{proof} We apply the iterative method as in Theorem \ref{Thm 2.1}. As in that proof, we can assume that $ F_{1}(0) = F_{2}(0) = 0 $. Construct $ \lbrace u_{k} \rbrace, k\in \mathbb N, $ such that
\begin{align}
 -\Delta u_{0} = f, u_{0} |_{\partial \Omega} &= 0; \\
 - \Delta u_{k} + F_{1}(u_{k-1}) + F_{2}(\nabla u_{k-1}) = f, u_{k} |_{\partial \Omega} &= 0.\label{Thm 2.2 iteration-strong}
 \end{align}
\noindent Applying (\ref{Thm 2.1 Lip-esti}) to $ F_{1}(u_k) $ and $ F_{2} (\nabla u_k)$, we conclude that
\begin{equation*}
\left\lVert F_{1}(u_{k}) \right\rVert_{L^{2}(\Omega)} \leqslant \tilde{C}_{1} \left\lVert u_{k} \right\rVert_{H^{2}(\Omega)}, \left\lVert F_{2}(\nabla u_{k}) \right\rVert_{L^{2}(\Omega)} \leqslant \tilde{C}_{2} \left\lVert u_{k} \right\rVert_{H^{2}(\Omega)}.
\end{equation*}
\noindent for some constants $ \tilde{C}_{1}, \tilde{C}_{2} > 0 $, provided $ u_{k} \in H^{2}(\Omega) $.  As before, it follows that $ u_{k} \in H_{0}^{1}(\Omega) \cap H^{2}(\Omega) $. Subtracting two consecutive iterative equations in the weak form of (\ref{Thm 2.2 iteration-strong}), we have
\begin{equation*}
\int_{\Omega} (\nabla u_{k+1} - \nabla u_{k}) \cdot \nabla v dx = -\int_{\Omega} (F_{1}(u_{k}) - F_{1}(u_{k-1})) \cdot v dx - \int_{\Omega} (F_{2}(\nabla u_{k}) - F_{2}(\nabla u_{k-1})) \cdot v dx,
\end{equation*}
\noindent for any $ v \in H_{0}^{1}(\Omega) $. Choosing $ v = u_{k+1} - u_{k} $, we compute that
\begin{align*}  \lefteqn{
 \left\lVert \nabla u_{k+1} - \nabla u_{k} \right\rVert_{L^{2}(\Omega)}^{2} }\\
 & \leqslant \int_{\Omega} \left\lvert F_{1}(u_{k}) - F_{1}(u_{k-1}) \right\rvert v dx + \int_{\Omega} \left\lvert F_{2}(\nabla u_{k}) - F_{2}(\nabla u_{k-1}) \right\rvert v dx \\
& \leqslant  \int_{\Omega} C_{1} \left\lvert u_{k} - u_{k-1} \right\rvert \cdot \left\lvert u_{k+1} - u_{k} \right\rvert dx + \int_{\Omega} C_{2} \left\lvert \nabla u_{k} - \nabla u_{k-1} \right\rvert \cdot \left\lvert u_{k+1} - u_{k} \right\rvert dx \\
&\leqslant  C_{1} \left\lVert u_{k} - u_{k-1} \right\rVert_{L^{2}(\Omega)} \left\lVert u_{k+1} - u_{k} \right\rVert_{L^{2}(\Omega)} + C_{2} \left\lVert \nabla u_{k} - \nabla u_{k-1} \right\rVert_{L^{2}(\Omega)} \cdot \left\lVert u_{k+1} - u_{k} \right\rVert_{L^{2}(\Omega)} \\
&\leqslant  (C_{1} C^{2} + C_{2} C) \left\lVert \nabla u_{k} - \nabla u_{k-1} \right\rVert_{L^{2}(\Omega)} \cdot \left\lVert \nabla u_{k+1} - \nabla u_{k} \right\rVert_{L^{2}(\Omega)}.
\end{align*}
\noindent Thus
\begin{equation}
\left\lVert \nabla u_{k+1} - \nabla u_{k} \right\rVert_{L^{2}(\Omega)} \leqslant  \sqrt{C_{1} C^{2} + C_{2} C}
 \left\lVert \nabla u_{k} - \nabla u_{k-1} \right\rVert_{L^{2}(\Omega)}. 
\end{equation}
\noindent Since $C_{1} C^{2} + C_{2} C<1$,  $\lbrace \nabla u_{k} \rbrace $ is a Cauchy sequence in $ L^{2}(\Omega) $.
By the Poincar\'e inequality, $ \lbrace u_{k} \rbrace $ is Cauchy in $ H_{0}^{1}(\Omega) $. Therefore $ \lbrace u_{k} \rbrace $ converges to a limit $ u \in H_{0}^{1}(\Omega) $.\\
\\
We prove that $ u $ solves (\ref{Thm 2.2 equation}) in the weak sense. The integrals in the weak version of (\ref{Thm 2.2 iteration-strong}),
\begin{equation*}
\int_{\Omega} \nabla u_{k} \cdot \nabla v dx + \int_{\Omega} (F_{1}(u_{k-1})v + F_{2}(\nabla u_{k-1}) v)dx = \int_{\Omega} fvdx,
\end{equation*}
\noindent  still converge when we replace $ u_{k}, u_{k-1} $ by $ u $. We now estimate
\begin{equation*}
\int_{\Omega} (\nabla u - \nabla u_{k}) \cdot v + \int_{\Omega} (F_{1}(u) - F_{1}(u_{k-1})v dx + \int_{\Omega} (F_{2}(\nabla u) - F_{2}(\nabla u_{k-1})) vdx : = A + B + C.
\end{equation*}
\noindent Repeating the argument in Theorem \ref{Thm 2.1}, we obtain $\left\lvert A \right\rvert \xrightarrow{k \rightarrow \infty} 0, \left\lvert B \right\rvert \xrightarrow{k \rightarrow \infty} 0.$ For $ C $, we compute that
\begin{align*}
& \left\lvert C \right\rvert = \left\lvert \int_{\Omega} (F_{2}(\nabla u) - F_{2}(\nabla u_{k-1})) vdx \right\rvert \leqslant \int_{\Omega} \left\lvert F_{2}(\nabla u) - F_{2}(\nabla u_{k-1}) \right\rvert \left\lvert v \right\rvert dx \\
\leqslant & C_{2} \int_{\Omega} \left\lvert \nabla u - \nabla u_{k-1} \right\rvert \left\lvert v \right\rvert dx \leqslant C_{2} \left\lVert \nabla u - \nabla u_{k-1} \right\rVert_{L^{2}(\Omega)} \cdot \left\lVert v \right\rVert_{L^{2}(\Omega)} \\
\leqslant & C_{2} \left\lVert u - u_{k-1} \right\rVert_{H_{0}^{1}(\Omega)} \cdot \left\lVert v \right\rVert_{L^{2}(\Omega)} \xrightarrow{k \rightarrow \infty} 0.
\end{align*}
\noindent We observe that
\begin{align*}
& \left\lvert \int_{\Omega} \nabla u \cdot \nabla v dx + \int_{\Omega} (F_{1}(u) + F_{2}(\nabla u)) vdx - \int_{\Omega} fv dx \right\rvert \\
\leqslant & \left\lvert \int_{\Omega} (\nabla u - \nabla u_{k}) \cdot v + \int_{\Omega} (F_{1}(u) - F_{1}(u_{k-1})v dx + \int_{\Omega} (F_{2}(\nabla u) - F_{2}(\nabla u_{k-1})) vdx \right\rvert  \\
& \qquad + \left\lvert \int_{\Omega} \nabla u_{k} \cdot \nabla v dx + \int_{\Omega} (F_{1}(u_{k-1})v + F_{2}(\nabla u_{k-1}) v)dx - \int_{\Omega} fv dx \right\rvert \\
= &  \left\lvert \int_{\Omega} (\nabla u - \nabla u_{k}) \cdot v + \int_{\Omega} (F_{1}(u) - F_{1}(u_{k-1})v dx + \int_{\Omega} (F_{2}(\nabla u) - F_{2}(\nabla u_{k-1})) vdx \right\rvert.
\end{align*}
\noindent 
The second to last line above vanishes due to (\ref{Thm 2.2 iteration-strong}). By the estimates for $ A, B, C $, the last line is arbitrarily small for $ k \gg 0$. It follows that $ u $ solves (17) in the weak sense. \\
\\
As before, since each $ u_{k} $ solves (\ref{Thm 2.2 iteration-strong}), we obtain by the continuity of the trace operator that $ u_{k} |_{\partial \Omega} = Tu_{k} = 0 $, which implies that $ Tu = 0 $.\\ 
\\
The uniqueness proof is essentially the same as in Theorem \ref{Thm 2.1}.
\end{proof} 

\noindent We now discuss the PDE of the form $ -\Delta u + F_{1}(u) + F_{2}(\nabla u) = f, u |_{\partial \Omega} = g $ with general Dirichlet condition, which has non-zero trace class. Suppose that $ f \in L^{2}(\Omega) $ and $ g \in L^{2}(\partial \Omega) $ such that $ g $ is the trace of some $ \tilde{g} \in H^{1}(\Omega) $. In this case, we set $ \tilde{u} = u - \tilde{g} $ and it follows that the above PDE becomes
\begin{equation*}
-\Delta \tilde{u} + F_{1}(\tilde{u} + \tilde{g}) + F_{2}(\nabla u + \nabla \tilde{g})= f - \Delta \tilde{g}, \; \text{in} \; \Omega, \tilde{u} = 0 \; \text{on} \; \partial \Omega.
\end{equation*}
\noindent Note that above expression does make sense in the weak form. To achieve this, we can rewrite the strong forms in (\ref{Thm. 2.1 strong-iteration1}) and (\ref{Thm. 2.1 strong-iteration2}) to weak form. We show the existence and uniqueness of the solution of PDE with general Dirichlet condition in the following corollary.
\begin{corollary}\label{Cor 2.1}
Let $ \Omega \subset \mathbb{R}^{n} $ be an open, bounded, connected subset of n-dimensional Euclidean space with $ \mathcal{C}^{1} $ boundary and symmetric boundary condition. Let $ C = \lambda_{1}^{-1 \slash 2} $ be the optimal Poincar\'e constant on $ \Omega $, as in Proposition \ref{Prop 2.1}. Assume that $ F_{1} : \mathbb{C} \rightarrow \mathbb{C} $, $ F_{2} : \mathbb{C}^{n} \rightarrow \mathbb{C} $ are globally Lipschitz functions with Lipschitz constants $ C_{1}, C_{2} $ such that $ C_{1} C^{2} + C_{2}C < 1 $. Then the Cauchy problem of nonlinear elliptic equation
\begin{equation}\label{Cor 2.1 equation}
-\Delta u + F_{1}(u) + F_{2}(\nabla u) = f \; \text{in} \; \Omega, u |_{\partial \Omega} = g,
\end{equation}
\noindent has a unique weak solution $ u \in H_{0}^{1}(\Omega) $, provided that $ f \in L^{2}(\Omega) $, $ g \in L^{2}(\partial \Omega) $ and $ g = \tilde{g} $ on $ \partial \Omega $ in the trace sense where $ \tilde{g} \in H^{1}(\Omega) $.
\end{corollary}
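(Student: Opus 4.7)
The plan is to reduce the inhomogeneous Dirichlet problem (\ref{Cor 2.1 equation}) to the homogeneous case already handled by Theorem 2.2 via the substitution $\tilde u = u - \tilde g$. Since $g$ is the trace of $\tilde g\in H^1(\Omega)$, the boundary condition $u|_{\partial\Omega}=g$ is equivalent to $\tilde u\in H_0^1(\Omega)$. Because $\tilde g$ has no classical Laplacian, the reduced equation must be treated entirely in weak form: a weak solution of (\ref{Cor 2.1 equation}) corresponds to $\tilde u\in H_0^1(\Omega)$ satisfying
\[
\int_{\Omega}\nabla\tilde u\cdot\nabla v\,dx+\int_{\Omega}\bigl[F_{1}(\tilde u+\tilde g)+F_{2}(\nabla\tilde u+\nabla\tilde g)\bigr]v\,dx=\int_{\Omega}fv\,dx-\int_{\Omega}\nabla\tilde g\cdot\nabla v\,dx
\]
for every $v\in H_0^1(\Omega)$. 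With $f\in L^2(\Omega)$ and $\tilde g\in H^1(\Omega)$, every integral on the right is a bounded linear functional of $v\in H_0^1(\Omega)$ once $\tilde u$ is frozen in $H_0^1(\Omega)$.

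I then introduce translated nonlinearities $\hat F_1(x,z):=F_1(z+\tilde g(x))$ and $\hat F_2(x,w):=F_2(w+\nabla\tilde g(x))$. These are Lipschitz in the function arguments $z,w$ with the same constants $C_1,C_2$, since at each $x$ the translation by $\tilde g(x)$ (resp.\ $\nabla\tilde g(x)$) is a fixed vector that cancels under subtraction inside the Lipschitz bound. I run the iteration of Theorem 2.2 in weak form: let $\tilde u_0\in H_0^1(\Omega)$ be the Lax--Milgram solution of $\int_{\Omega}\nabla\tilde u_0\cdot\nabla v\,dx=\int_{\Omega}fv\,dx-\int_{\Omega}\nabla\tilde g\cdot\nabla v\,dx$, and for $k\geq 1$,
\[
\int_{\Omega}\nabla\tilde u_k\cdot\nabla v\,dx=\int_{\Omega}fv\,dx-\int_{\Omega}\nabla\tilde g\cdot\nabla v\,dx-\int_{\Omega}\bigl[\hat F_1(x,\tilde u_{k-1})+\hat F_2(x,\nabla\tilde u_{k-1})\bigr]v\,dx.
\]
At each step the right-hand side is bounded linear on $H_0^1(\Omega)$, so Lax--Milgram produces a unique $\tilde u_k\in H_0^1(\Omega)$.

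The contraction estimate is essentially verbatim that of Theorem 2.2: subtract two consecutive iterates, test against $v=\tilde u_{k+1}-\tilde u_k$, and observe that the $\tilde g$-translation drops out when differencing $\hat F_j$. The resulting bound
\[
\|\nabla(\tilde u_{k+1}-\tilde u_k)\|_{L^2(\Omega)}\leq \sqrt{C_1C^2+C_2C}\,\|\nabla(\tilde u_k-\tilde u_{k-1})\|_{L^2(\Omega)}
\]
together with $C_1C^2+C_2C<1$ makes $\{\tilde u_k\}$ Cauchy in $H_0^1(\Omega)$. Passing to the limit exactly as in Theorem 2.2 identifies the limit $\tilde u\in H_0^1(\Omega)$ as a weak solution of the reduced equation, so $u:=\tilde u+\tilde g\in H^1(\Omega)$ solves (\ref{Cor 2.1 equation}) weakly and has trace $g$. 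For uniqueness, the difference of any two such solutions lies in $H_0^1(\Omega)$ since both have trace $g$, and the Lipschitz-plus-Poincar\'e argument used in Theorem 2.1 forces this difference to vanish.

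The main obstacle is organizational rather than analytic: because $\tilde g$ is merely in $H^1(\Omega)$, its Laplacian is not in $L^2(\Omega)$, so I cannot imitate Theorem 2.2 in strong form and must carry each iterative step as a bounded linear functional on $H_0^1(\Omega)$ solved by Lax--Milgram. The only substantive point to verify is the invariance of the Lipschitz constants under the $x$-dependent translation by $\tilde g$; this is immediate and is precisely what permits the Theorem 2.2 contraction constant to transfer unchanged.
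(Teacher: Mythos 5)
Your proposal is correct and follows essentially the same route as the paper: the substitution $\tilde u = u - \tilde g$ to reduce to the homogeneous Dirichlet case, carrying the iteration in weak form because $\Delta\tilde g$ only makes sense distributionally, and then rerunning the contraction argument of Theorem 2.2 with the translated nonlinearities, whose Lipschitz constants are unchanged. Your explicit Lax--Milgram formulation of each iterative step and the observation that the limit $u=\tilde u+\tilde g$ lies in $H^1(\Omega)$ with trace $g$ merely spell out details the paper leaves implicit.
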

\begin{proof} We set $ \tilde{u} = u - \tilde{g} $, then (\ref{Cor 2.1 equation}) is equivalent to
\begin{equation}\label{Cor 2.1 equation2}
-\Delta \tilde{u} + F_{1}(\tilde{u} + \tilde{g}) + F_{2}(\nabla \tilde{u} + \nabla \tilde{g}) = f - \Delta \tilde{g}, \tilde{u} = 0 \; \text{on} \; \partial \Omega,
\end{equation}
\noindent in the weak sense. Note that the iteration equations become
\begin{equation*}
-\Delta \tilde{u}_{k} + F_{1}(\tilde{u}_{k-1} + \tilde{g}) + F_{2}(\nabla \tilde{u}_{k-1} + \nabla \tilde{g}) = f - \Delta \tilde{g}, \tilde{u}_{k} = 0 \; \text{on} \; \partial \Omega.
\end{equation*}
\noindent Where we have
\begin{equation*}
\left\lVert F_{1}(\tilde{u}_{k}) \right\rVert_{L^{2}(\Omega)} \leqslant \tilde{C}_{1} (\left\lVert \tilde{u}_{k} \right\rVert_{H^{2}(\Omega)} + \left\lVert \tilde{g} \right\rVert_{H^{1}(\Omega)}), \left\lVert F_{2}(\nabla \tilde{u}_{k}) \right\rVert_{L^{2}(\Omega)} \leqslant \tilde{C}_{2} (\left\lVert \tilde{u}_{k} \right\rVert_{H^{2}(\Omega)} + \left\lVert \tilde{g} \right\rVert_{H^{1}(\Omega)}).
\end{equation*} 
\noindent Then the solvability of (\ref{Cor 2.1 equation2}) as well as uniqueness of solution can be derived by exactly the same method as in Theorem 2.2.
\end{proof}
\begin{remark} We can apply this theory to find special solutions to the Schr\"odinger and wave equations on $ \Omega$ for functions $F(z) = f(|z|)z$ for a globally Lipschitz function $f:[0,\infty)\to \C.$ Note that $F(e^{i\xi t} z) = e^{i\xi t} F(z).$  If we assume that the solution of the Schr\"odinger equation
\begin{equation}\label{Schrodinger}
i\partial_{t}u + \Delta u = F(u), u|_{t = 0, x \in \partial \Omega} = g(x), u : [0, \infty) \times \Omega \rightarrow \mathbb{C},
\end{equation}
\noindent is of the form $ u(t, x) = e^{i\xi t} Q(x) $, then (\ref{Schrodinger}) reduces to
\begin{equation}
-\Delta Q(x) + \xi Q(x) = -F(Q(x)), Q(x) = g(x).
\end{equation}
\noindent We may apply Corollary \ref{Cor 2.1} to solve this generalized Dirichlet problem 
provided  $ \xi \geqslant 0 $. \\
\\
For the same choices of $F$ and $u(t,x)$, we can reduce the nonlinear wave equation
\begin{equation}
\partial_{tt}u + \Delta u = F(u), u|_{t = 0, x \in \partial \Omega} = g(x), u : [0, \infty) \times \Omega \rightarrow \mathbb{C},
\end{equation}
\noindent to the nonlinear elliptic equation
\begin{equation}\label{wave}
- \Delta Q(x) + \xi^{2} Q(x) = -F(Q(x)), Q |_{\partial \Omega} = g(x).
\end{equation}
\noindent We can again solve (\ref{wave}) by Corollary \ref{Cor 2.1}.
\end{remark}

\noindent \section{Nonlinear Laplace Equations on Compact Riemannian Manifolds with Boundary}
\noindent In this section, we consider the existence of solutions of (\ref{one}) on compact manifolds with boundary, for both Dirichlet and Neumann conditions. 
Throughout this section, $(M,\partial M, g)$ denotes a smooth, connected, compact Riemannian manifold $ (M, g) $ with $ \mathcal{C}^{1} $ boundary $ \partial M $.
We rely heavily on appropriate versions of the Poincar\'e inequality.
\subsection{Dirichlet Boundary Conditions}
\noindent In this subsection, we consider the existence of the solution of complex-valued nonlinear Poisson equation
\begin{equation}\label{Poisson manifold}
-\Delta_{g} u + F(u) = f, u \equiv 0 \; on \; \partial M,
\end{equation}
\noindent on $(M, \partial M, g).$ Here $-\Delta_g$ is the Laplace-Beltrami operator on $M$, and
$ F $ is a complex-valued globally Lipschitz function with Lipschitz constant $ C_{1}$. Our convention is that $ - \Delta_{g} $ has strictly positive eigenvalues. \\
\\
In order to solve (\ref{Poisson manifold}), we need a Poincar\'e inequality on $ (M, \partial M, g) $ with boundary; we then recall existence and regularity results for the linear Poisson
equation on $ (M, \partial M, g) $; finally, we apply a manifold version of the Euclidean iterative method.\\
\\
Let $ d\mu $ be the volume form on $ (M,\partial M, g) $. Using $d\mu$, we define the manifold analogues of the inner products in Definition \ref{Def 2.1}. There are several equivalent definitions of Sobolev spaces on manifolds, and we choose the one that is most convenient for our purposes.
\begin{definition} \label{Def 3.1} Let $ (M,\partial M,g) $ and $d\mu$ be the same as above. \\
(i) The $L^2$ inner product $ \langle \cdot, \cdot \rangle_{L^{2}} $ on $ \mathcal{C}^{\infty}(M,\C) $ is
\begin{equation*}
\langle f, h \rangle_{L^{2}} = \int_{M} f \bar{h} d\mu, \forall f.
\end{equation*}
\noindent  $ L^{2}(M) $ is the completion of $ \mathcal{C}^{\infty} (M,\C)$ with respect to $ \langle \cdot, \cdot \rangle_{L^{2}} $. The norm on $ L^{2}(M) $ is given by $ \left\lVert f \right\rVert_{L^{2}(M)}^{2} = \langle f, f \rangle_{L^{2}} $. \\
\\
(ii) The energy inner product $ \langle \cdot, \cdot \rangle_{E} $ on $ \mathcal{C}^{\infty}(M,\C) $ is
\begin{equation*}
\langle f, h \rangle_{E} = \int_{M} g(\nabla f, \nabla \bar{h}) d\mu.
\end{equation*}
Here $\nabla f$ is the gradient of $f$, which is given in local coordinates by
$\nabla f = g^{ij}\partial_jf (\partial/\partial x^i).$ \\
\\
(iii) The $H^1$ inner product $ \langle \cdot, \cdot \rangle_{H^{1}} $ on $ \mathcal{C}^{\infty}(M,\C) $ is
\begin{equation*}
\langle f, h \rangle_{H^{1}} = \langle f, h \rangle_{L^{2}} + \langle f, h \rangle_{E}, \forall f, h \in \mathcal{C}^{\infty}(M, \mathbb{C}).
\end{equation*}
\noindent $ H^{1}(M) $ is the completion of $ \mathcal{C}^{\infty}(M,\C) $ with respect to $ \langle \cdot, \cdot \rangle_{H^{1}} $.  $H^1_0(M)  $ is the completion of $ \mathcal{C}_{c}^{\infty}(\mathring M,\C) $ with respect to $ \langle \cdot, \cdot \rangle_{H^{1}} $. $H_0^1(M)$ is the space of weakly differentiable functions on $M$ that vanish on $\partial M.$ The norm on $ H^{1}(M) $ is thug given by $ \lVert f \rVert_{H^{1}(M)} = \langle f, f \rangle_{H^{1}} $. \\
\\
(iv) In general, the integer-ordered Sobolev spaces $ H^{k}(M) $, $ k > 1 $ is defined inductively by requiring that
 \begin{equation*}
     u \in H^{k}(M) \Leftrightarrow  \frac{\partial u}{\partial x_{j}} \in H^{k-1}(M), j = 1, \dotso, n.
 \end{equation*}
 \noindent Here $ x_{1}, \dotso, x_{n} $ are local coordinates on charts of $ M $.
\end{definition}
\noindent We now state two fundamental results about the spectral theory of $-\Delta_g$ on  $ (M, g) $ with a $ \mathcal{C}^{1} $ boundary  and the corresponding Poincar\'e inequality.
\begin{theorem} \cite[Thm. 4.4]{Aubin} For  $ (M, \partial M,g) $,  the eigenvalues of the Dirichlet problem for the Laplacian $ -\Delta_{g} $ on $ (M, g) $ are strictly positive. The eigenfunctions are in $ \mathcal{C}^{\infty}(M) $. The first eigenvalue $ \lambda_{1} $ equals 
 $$\text{inf}_{\phi\in \mathcal A} \left\lVert \phi \right\rVert_{E}^{2}, \ {\rm for}\   \mathcal{A} = \lbrace \phi \in H_{0}^{1}(M), \left\lVert \phi \right\rVert_{L^{2}(M)} = 1 \rbrace.$$
\end{theorem}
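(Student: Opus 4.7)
The plan is to characterize $\lambda_1$ directly by the direct method of the calculus of variations, and then extract the full discrete spectrum via the Green's operator together with the spectral theorem for compact self-adjoint operators. First I would establish a Poincar\'e-type estimate on $H_0^1(M)$: using Rellich-Kondrachov compactness of the inclusion $H_0^1(M) \hookrightarrow L^2(M)$ on $(M, \partial M, g)$ with $\mathcal{C}^1$ boundary (obtained via partitions of unity and local boundary straightening), a standard contradiction argument shows that $\left\lVert \phi \right\rVert_{L^2(M)} \leqslant C_0 \left\lVert \phi \right\rVert_E$ for all $\phi \in H_0^1(M)$. In particular, $\lambda_1 := \inf_{\phi \in \mathcal{A}} \left\lVert \phi \right\rVert_E^2 \geqslant C_0^{-2} > 0$.

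Next I would show that the infimum is attained. Take a minimizing sequence $\{\phi_k\} \subset \mathcal{A}$; it is bounded in $H_0^1(M)$, so after passing to a subsequence it converges weakly in $H_0^1(M)$ and, by Rellich-Kondrachov, strongly in $L^2(M)$ to some $\phi_* \in H_0^1(M)$ with $\left\lVert \phi_* \right\rVert_{L^2} = 1$. Weak lower semi-continuity of $\left\lVert \cdot \right\rVert_E$ then gives $\left\lVert \phi_* \right\rVert_E^2 \leqslant \lambda_1$, so $\phi_*$ realizes the infimum. A standard Lagrange multiplier computation yields the weak Euler-Lagrange equation
\begin{equation*}
\int_M g(\nabla \phi_*, \nabla \bar v)\, d\mu = \lambda_1 \int_M \phi_* \bar v\, d\mu, \quad \forall v \in H_0^1(M),
\end{equation*}
so $\phi_*$ is a Dirichlet eigenfunction with eigenvalue $\lambda_1$, confirming the variational characterization.

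To obtain the complete spectrum I would invoke the Green's operator. By Lax-Milgram applied to the form $a(u,v) = \int_M g(\nabla u, \nabla \bar v)\, d\mu$ (coercive by the Poincar\'e estimate), there is a bounded $G: L^2(M) \to H_0^1(M) \subset L^2(M)$ sending $f$ to the unique weak solution of $-\Delta_g u = f$ with zero trace. This $G$ is self-adjoint by symmetry of $a$, strictly positive since $\langle Gf, f \rangle_{L^2} = \left\lVert Gf \right\rVert_E^2 > 0$ for $f \neq 0$, and compact through factorization by the Rellich-Kondrachov embedding. The spectral theorem then produces an $L^2$-orthonormal eigenbasis $\{\phi_k\}$ with positive eigenvalues $\mu_k \downarrow 0^+$; the Dirichlet eigenvalues are $\lambda_k = \mu_k^{-1} \uparrow \infty$, all strictly positive. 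Smoothness of each $\phi_k$ then follows by bootstrapping: $-\Delta_g \phi_k = \lambda_k \phi_k \in L^2(M)$ and elliptic regularity give $\phi_k \in H^2(M)$, then $H^4(M)$, and inductively $\phi_k \in H^s(M)$ for every $s$, hence $\phi_k \in \mathcal{C}^\infty(M)$ by Sobolev embedding.

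The main obstacle is the manifold Poincar\'e estimate itself, since the Rellich-Kondrachov compactness near a $\mathcal{C}^1$ boundary requires care with local boundary charts whose regularity is only $\mathcal{C}^1$. Once this input is granted, the remainder reduces to Hilbert-space spectral theory and standard elliptic regularity that transport from the Euclidean setting with only cosmetic changes; indeed, this is essentially the content of the cited \cite[Thm.~4.4]{Aubin}.
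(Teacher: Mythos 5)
Your proposal is correct, but note that the paper offers no proof of its own here: it simply cites \cite[Thm.~4.4]{Aubin}, so your argument should be compared with the proof in that reference rather than with anything in the text. Your route agrees with the standard one in its first half (Rellich--Kondrachov compactness, the Poincar\'e estimate, the direct method and the Lagrange-multiplier identity for $\lambda_1$), which is exactly the variational characterization being quoted; where you diverge is in producing the full spectrum via the Green's operator $G$ and the spectral theorem for compact self-adjoint operators, whereas Aubin's treatment proceeds by iterated minimization of the Rayleigh quotient over the $L^2$-orthogonal complement of the previously found eigenfunctions. The two are equivalent; the spectral-theorem route is cleaner for getting completeness of the eigenbasis in one stroke, while the iterative minimization stays entirely within the calculus of variations and gives the min-max description of each $\lambda_k$ directly. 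One small caution: since the boundary is only $\mathcal{C}^1$, your bootstrap gives $\phi_k\in\mathcal{C}^\infty$ in the interior (which is what the statement's $\mathcal{C}^\infty(M)$ is meant to convey, and all that is used later), but regularity up to $\partial M$ would require more boundary smoothness than is assumed, so you should phrase the final regularity claim as interior smoothness.
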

\begin{theorem}\label{Thm 3.2} \cite[Cor. 4.6]{Aubin}, {\it Poincar\'e Inequality.} For $ (M, \partial M,g) $ and $\lambda_1$ as above we have
\begin{equation}\label{ten}
\left\lVert \phi \right\rVert_{L^{2}(M)}^{2} \leqslant \lambda_{1}^{-1} \left\lVert \phi \right\rVert_{E}^{2}, \forall \phi \in H_{0}^{1}(M).
\end{equation}
\noindent Thus $ \left\lVert \phi \right\rVert_{E} $ is an equivalent norm for $ H_{0}^{1}(M) $.
\end{theorem}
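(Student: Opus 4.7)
The plan is to derive Theorem 3.2 directly from the variational characterization of the first Dirichlet eigenvalue given in Theorem 3.1, exactly in the style of the Euclidean argument referenced in Proposition 2.1. The only input needed is that $\lambda_1 = \inf\{\|\phi\|_E^2 : \phi \in H_0^1(M),\ \|\phi\|_{L^2(M)} = 1\}$, and the rest is a homogeneity/scaling argument.

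First I would dispose of the trivial case $\phi = 0$. For any nonzero $\phi \in H_0^1(M)$, I set $\tilde\phi := \phi / \|\phi\|_{L^2(M)}$. Since $H_0^1(M)$ is a vector space and the $L^2$ norm of $\tilde\phi$ equals $1$, we have $\tilde\phi \in \mathcal{A}$, so Theorem 3.1 yields $\|\tilde\phi\|_E^2 \geq \lambda_1$. The energy norm is $2$-homogeneous in $\phi$, so multiplying through by $\|\phi\|_{L^2(M)}^2$ gives
\begin{equation*}
\|\phi\|_E^2 \;\geq\; \lambda_1 \|\phi\|_{L^2(M)}^2,
\end{equation*}
which is exactly (\ref{ten}) after dividing by $\lambda_1 > 0$ (positivity of $\lambda_1$ is part of Theorem 3.1).

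For the norm-equivalence assertion, one direction is immediate: by definition $\|\phi\|_E^2 \leq \|\phi\|_{H^1}^2$. For the other direction, combining (\ref{ten}) with the definition of the $H^1$ inner product yields
\begin{equation*}
\|\phi\|_{H^1}^2 \;=\; \|\phi\|_{L^2(M)}^2 + \|\phi\|_E^2 \;\leq\; \bigl(\lambda_1^{-1} + 1\bigr)\,\|\phi\|_E^2,
\end{equation*}
so the two norms are comparable with explicit constants.

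The only delicate point is that Theorem 3.1 is a statement about smooth eigenfunctions and an infimum taken over all of $H_0^1(M)$; I would observe that the infimum is in fact attained (by a smooth first eigenfunction, per Theorem 3.1) and that both $\|\cdot\|_{L^2(M)}$ and $\|\cdot\|_E$ extend continuously from $\mathcal{C}_c^\infty(\mathring M, \C)$ to $H_0^1(M)$ by Definition 3.1, so the variational inequality $\|\phi\|_E^2 \geq \lambda_1$ for every $\phi \in \mathcal{A}$ passes to the completion without modification. No compactness or regularity results beyond what Theorem 3.1 already provides are required, which is why this step is genuinely routine once Theorem 3.1 is in hand — the substantive content of the Poincaré inequality has already been absorbed into the spectral statement.
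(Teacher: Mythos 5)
Your proposal is correct: the scaling argument from the variational characterization $\lambda_1 = \inf_{\phi\in\mathcal{A}}\lVert\phi\rVert_E^2$ in Theorem 3.1, plus the two-sided bound $\lVert\phi\rVert_E^2 \leqslant \lVert\phi\rVert_{H^1}^2 \leqslant (1+\lambda_1^{-1})\lVert\phi\rVert_E^2$, is exactly the Rayleigh-quotient argument the paper relies on (it gives no independent proof, citing Aubin, and invokes the same reasoning in the Euclidean case of Proposition 2.1). Your remark that the infimum is attained is not even needed, since the infimum characterization over $H_0^1(M)$ already yields the inequality for every normalized $\phi$.
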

\noindent We now consider the Dirichlet problem for the linear Poisson equation on $M$:
\begin{equation}\label{eleven}
-\Delta_{g} u = f, u \bigg|_{\partial M} = 0.
\end{equation}
\noindent Note that for $u, v \in C^\infty_c(\mathring M)$, 
\begin{equation}\label{motivation}\langle -\Delta_g u,v\rangle_{L^2} = \langle \nabla u, \nabla v
\rangle_{L^2} = \langle u,v\rangle_E.
\end{equation} The integration by parts in the first equality extends to functions with Dirichlet boundary conditions. This motivates the following definition.

\begin{definition} For $u, v \in H_{0}^{1}(M)$, set $B[u, v]  = \langle u, v \rangle_{E}.$
Choose $ f \in L^{2}(M) $. We say that $ u $ is a complex-valued weak solution of the Dirichlet boundary problem (\ref{eleven}) if
\begin{equation*}
B[u, v] = \langle f, v \rangle_{L^{2}},
\end{equation*}
\noindent for all $ v.$
\end{definition} 
\noindent The following theorem states the existence, uniqueness and regularity of weak solutions of (\ref{eleven}) on $ (M, \partial M, g) $. 
\begin{theorem}  \cite[Thm. 4.8]{Aubin} For $ (M, \partial M, g) $, there exists a unique weak solution $ u \in H_{0}^{1}(M) $ of (\ref{eleven}). If $ f \in \mathcal{C}^{\infty}(M) $, then $ u \in \mathcal{C}^{\infty}(M) $ and $u$ vanishes on the boundary. 
\end{theorem}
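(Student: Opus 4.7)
The plan is to combine the Lax--Milgram theorem with standard elliptic regularity. Existence and uniqueness of the weak solution follow from Lax--Milgram applied to the sesquilinear form $B[u,v] = \langle u,v\rangle_E$ on the Hilbert space $H_0^1(M)$; the smoothness claim follows from interior and boundary regularity for $-\Delta_g$.

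First I would verify the hypotheses of Lax--Milgram on $H_0^1(M)$. Sesquilinearity of $B$ is immediate. Continuity is Cauchy--Schwarz: $|B[u,v]|\leq \|u\|_E\|v\|_E\leq \|u\|_{H^1}\|v\|_{H^1}$. Coercivity is where the hypotheses enter: by the Poincar\'e inequality of Theorem \ref{Thm 3.2}, $\|u\|_{L^2}^2\leq \lambda_1^{-1}\|u\|_E^2$ for $u\in H_0^1(M)$, so
$$\|u\|_{H^1}^2 = \|u\|_{L^2}^2 + \|u\|_E^2 \leq (1+\lambda_1^{-1})\|u\|_E^2 = (1+\lambda_1^{-1})\, B[u,u],$$
which gives $B[u,u]\geq (1+\lambda_1^{-1})^{-1}\|u\|_{H^1}^2$. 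The antilinear functional $v\mapsto \langle f,v\rangle_{L^2}$ is bounded on $H_0^1(M)$ because $|\langle f,v\rangle_{L^2}|\leq \|f\|_{L^2}\|v\|_{L^2}\leq \|f\|_{L^2}\|v\|_{H^1}$. Lax--Milgram then produces a unique $u\in H_0^1(M)$ such that $B[u,v]=\langle f,v\rangle_{L^2}$ for every $v\in H_0^1(M)$, which is exactly the weak solution condition; uniqueness also follows directly, since if $u_1,u_2$ are two weak solutions then $B[u_1-u_2,u_1-u_2]=0$ implies $\|u_1-u_2\|_E=0$, and Poincar\'e finishes the argument.

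For the regularity claim, assuming $f\in \mathcal{C}^\infty(M)$, I would invoke standard elliptic regularity for $-\Delta_g$. In any coordinate chart, $-\Delta_g = -\frac{1}{\sqrt{\det g}}\partial_i(\sqrt{\det g}\, g^{ij}\partial_j)$ has smooth coefficients and uniformly elliptic principal symbol $g^{ij}\xi_i\xi_j$. The difference-quotient method together with Sobolev bootstrapping gives $u\in H^{k+2}_{\mathrm{loc}}$ whenever $-\Delta_g u=f\in H^k_{\mathrm{loc}}$, and iterating with Sobolev embedding yields $u\in \mathcal{C}^\infty(\mathring{M})$. Near $\partial M$ I would flatten the boundary via a local chart and apply boundary elliptic regularity on the half-ball to the transformed Dirichlet problem, obtaining $u\in H^k(M)$ for every $k$, hence $u\in \mathcal{C}^\infty(M)$ after Sobolev embedding. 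The Dirichlet condition $u|_{\partial M}=0$ is then preserved classically by continuity of the trace operator.

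The main obstacle is the boundary regularity step under the $\mathcal{C}^1$ boundary hypothesis, since the straightening diffeomorphism must be smooth enough that the transformed operator retains the regularity required for higher-order elliptic estimates. In practice one upgrades the boundary chart on a collar using the smoothness of the interior of $(M,g)$, or imposes the slightly stronger regularity implicitly assumed in the references. Since the theorem is quoted from Aubin, I would defer these technical verifications to \cite{Aubin}, having made explicit only the Lax--Milgram/Poincar\'e step that is central to our later iterative arguments.
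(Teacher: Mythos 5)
Your proposal is correct, and there is nothing in the paper to diverge from: the paper gives no proof of this theorem, it simply quotes it from Aubin. Your route — Lax--Milgram for the sesquilinear form $B[u,v]=\langle u,v\rangle_E$ on $H_0^1(M)$, with coercivity supplied by the Poincar\'e inequality of Theorem \ref{Thm 3.2}, followed by interior and boundary elliptic regularity — is the standard argument and is exactly the strategy the paper itself deploys for the Neumann problem in Theorem \ref{Thm 3.7}; note also that since $B$ is Hermitian and, by Theorem \ref{Thm 3.2}, $\|\cdot\|_E$ is an equivalent Hilbert norm on $H_0^1(M)$, you could bypass Lax--Milgram entirely and invoke the Riesz representation theorem. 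Your caveat about the smoothness claim is the one genuine subtlety: with only a $\mathcal{C}^1$ boundary, higher-order elliptic estimates up to the boundary are not available, and the result as cited (and as proved in Aubin) presupposes a smooth boundary, so the global $\mathcal{C}^\infty$ conclusion should be read as carrying that implicit extra regularity hypothesis — deferring that point to the reference, as you do, is appropriate.
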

\noindent We now define the weak form of (\ref{Poisson manifold}) and prove the existence of weak solutions.
\begin{definition} Fix $ f \in L^{2}(M) $. For  $ u,v \in H_{0}^{1}(M) $, set
$$B_{F}[u, v] : = \langle u, v \rangle_{E} + \langle F(u),v\rangle_{L^2}.$$
$u$ is a complex-valued weak solution of the Dirichlet boundary problem (\ref{Poisson manifold}) if
\begin{equation*}
B_{F}[u, v] = \langle f, v \rangle_{L^{2}}.
\end{equation*}
\noindent for all $ v \in H_{0}^{1}(M) $.
\end{definition} 
\begin{theorem} \label{Thm 3.4} For $ (M,  \partial M, g) $, let $ C = \lambda_{1}^{-1 \slash 2} $ be the Poincar\'e constant on $ M $ as in Theorem 3.2. Let $ F : \mathbb{C} \rightarrow \mathbb{C} $ be a globally Lipschitz function with Lipschitz constant $ C_{1} $, and fix $f\in L^2(M)$. If  $ C_{1}C^{2} < 1 $, then (\ref{Poisson manifold}) has a weak solution $ u \in H_{0}^{1}(M) $.
\end{theorem}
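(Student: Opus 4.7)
The plan is to mimic the iterative argument of Theorem \ref{Thm 2.1}, with the Euclidean Poincar\'e inequality and Dirichlet linear theory replaced by their manifold analogues from Theorems 3.2 and 3.3. First I would reduce to the case $F(0)=0$ by replacing $F$ with $\tilde F = F - F(0)$; since $F(0)$ is a constant and $M$ has finite volume, this only alters $f$ by an $L^2(M)$ function, so the hypotheses are preserved.

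Next I would define the iterates $\{u_k\}_{k=0}^{\infty}$ by $-\Delta_g u_0 = f$ with $u_0|_{\partial M}=0$, and for $k\geq 1$
\begin{equation*}
-\Delta_g u_k = f - F(u_{k-1}), \quad u_k|_{\partial M} = 0.
\end{equation*}
Each linear Dirichlet problem is solvable in $H_0^1(M)$ by Theorem 3.3, provided the right-hand side lies in $L^2(M)$. This is ensured inductively: if $u_{k-1}\in H^1_0(M)\subset L^2(M)$, the Lipschitz property together with $F(0)=0$ gives $\|F(u_{k-1})\|_{L^2(M)}\leq C_1\|u_{k-1}\|_{L^2(M)}$, so $f-F(u_{k-1})\in L^2(M)$.

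The heart of the argument is the contraction estimate. Subtracting two consecutive iteration equations and testing the resulting weak formulation against $v=u_{k+1}-u_k\in H_0^1(M)$, I would compute
\begin{equation*}
\|u_{k+1}-u_k\|_E^2 = -\langle F(u_k)-F(u_{k-1}),\, u_{k+1}-u_k\rangle_{L^2},
\end{equation*}
then apply Cauchy–Schwarz, the Lipschitz bound, and the Poincar\'e inequality of Theorem \ref{Thm 3.2} (twice, to each $L^2$ factor) to obtain
\begin{equation*}
\|u_{k+1}-u_k\|_E \leq \sqrt{C_1 C^2}\,\|u_k-u_{k-1}\|_E.
\end{equation*}
Because $C_1 C^2 < 1$, $\{u_k\}$ is Cauchy in the energy norm, hence in $H_0^1(M)$ since $\|\cdot\|_E$ is an equivalent norm on $H_0^1(M)$ by Theorem \ref{Thm 3.2}. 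Completeness of $H_0^1(M)$ produces a limit $u\in H_0^1(M)$.

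Finally I would pass to the limit in the weak formulation
\begin{equation*}
\langle u_k,v\rangle_E + \langle F(u_{k-1}),v\rangle_{L^2} = \langle f,v\rangle_{L^2}, \quad \forall v\in H_0^1(M),
\end{equation*}
estimating the difference with $B_F[u,v]$ term by term: Cauchy–Schwarz handles the energy term, and the Lipschitz bound $|F(u)-F(u_{k-1})|\leq C_1|u-u_{k-1}|$ together with $L^2$ convergence (which follows from $H_0^1$ convergence by Poincar\'e) handles the nonlinear term. The main conceptual step is this passage to the limit for the nonlinear term, but it is controlled cleanly by the global Lipschitz hypothesis; no delicate compactness argument is needed. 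The boundary condition $u|_{\partial M}=0$ is inherited from the $u_k$ by continuity of the trace operator $H^1(M)\to L^2(\partial M)$, completing the proof of existence.
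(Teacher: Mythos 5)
Your proposal is correct and follows essentially the same route as the paper: the paper's proof of Theorem 3.4 simply runs the iterative scheme of Theorem 2.1 in weak form, with the manifold Poincar\'e inequality (Theorem 3.2) and the linear Dirichlet solvability result (Theorem 3.3) replacing their Euclidean counterparts, exactly as you do. Your write-up just makes explicit the contraction estimate, the passage to the limit, and the boundary condition, which the paper leaves to the reference back to Theorem 2.1.
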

\begin{proof}  The argument in Theorem \ref{Thm 2.1}, now using Theorem 3.2 and Theorem 3.3, implies that 
\begin{equation*}
\left\lVert F(u) \right\rVert_{L^{2}(M)} \leqslant C_{1} \left\lVert u \right\rVert_{H_{0}^{1}(M)}.
\end{equation*}
\noindent Choosing a test function $ v \in H_{0}^{1}(M) $, we find $\{u_k\}$ satisfying the weak iterative equations
\begin{align}\label{twelve}
 \langle u_{0}, v \rangle_{E} &= \langle f, v \rangle_{L^{2}}, \nonumber \\
 \langle u_{k}, v \rangle_{E} + \langle F(u_{k-1}), v \rangle_{L^{2}} &= \langle f, v \rangle_{L^{2}}.
\end{align}
\noindent As in Theorem \ref{Thm 2.1}, we conclude that there exists $ u = \lim_{k\to\infty} u_k \in H_{0}^{1}(M) $ satisfying
\begin{equation*}
B_{F}[u, v] = \langle f, v \rangle_{L^{2}}.
\end{equation*}
\end{proof}
\noindent As in the Euclidean case, we may replace the trivial Dirichlet boundary condition with the general Dirichlet condition. 
\begin{corollary} Under the assumptions of Theorem \ref{Thm 3.4},  the generalized Dirichlet problem 
\begin{equation}\label{31}
-\Delta_{g} u + F(u) = f \; \text{in} \; M, u |_{\partial M} = h,
\end{equation}
\noindent has a weak solution $ u \in H_{0}^{1}(M) $, provided $ h \in L^{2}(\partial M) $ is the trace of $\tilde h\in H^1(M).$
\end{corollary}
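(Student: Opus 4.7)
The plan is to mirror the reduction carried out in Corollary \ref{Cor 2.1}: translate $u$ by the $H^1$-extension $\tilde h$ of the boundary datum and apply the iterative scheme of Theorem \ref{Thm 3.4} to the resulting problem with trivial Dirichlet data. Setting $\tilde u = u - \tilde h$, the weak formulation of (\ref{31}) becomes the problem of finding $\tilde u \in H_0^1(M)$ such that
\begin{equation*}
\langle \tilde u, v \rangle_E + \langle F(\tilde u + \tilde h), v \rangle_{L^2} = \langle f, v \rangle_{L^2} - \langle \tilde h, v \rangle_E
\end{equation*}
for every $v \in H_0^1(M)$; the right hand side is a bounded linear functional on $H_0^1(M)$ since $f \in L^2(M)$ and $\tilde h \in H^1(M)$. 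Given such $\tilde u$, the function $u := \tilde u + \tilde h \in H^1(M)$ has trace $h$ on $\partial M$ and solves (\ref{31}) weakly.

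I would then run the analogue of the iteration (\ref{twelve}): let $\tilde u_0 \in H_0^1(M)$ be the unique solution of $\langle \tilde u_0, v \rangle_E = \langle f, v \rangle_{L^2} - \langle \tilde h, v \rangle_E$, and recursively define $\tilde u_k \in H_0^1(M)$, $k \geq 1$, by
\begin{equation*}
\langle \tilde u_k, v \rangle_E = \langle f, v \rangle_{L^2} - \langle \tilde h, v \rangle_E - \langle F(\tilde u_{k-1} + \tilde h), v \rangle_{L^2}
\end{equation*}
for every $v \in H_0^1(M)$. Each step is uniquely solvable by Lax-Milgram, using coercivity of $\langle \cdot, \cdot \rangle_E$ on $H_0^1(M)$ from Theorem \ref{Thm 3.2} and the Lipschitz bound $\|F(\tilde u_{k-1} + \tilde h)\|_{L^2} \leq C_1(\|\tilde u_{k-1}\|_{L^2} + \|\tilde h\|_{L^2})$, which places the right hand side in $L^2(M)$. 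The key observation is that the shift by $\tilde h$ cancels pointwise in the Lipschitz estimate,
\begin{equation*}
|F(\tilde u_k + \tilde h) - F(\tilde u_{k-1} + \tilde h)| \leq C_1 |\tilde u_k - \tilde u_{k-1}|,
\end{equation*}
so subtracting two consecutive weak iterations and testing against $v = \tilde u_{k+1} - \tilde u_k$ yields $\|\nabla(\tilde u_{k+1} - \tilde u_k)\|_{L^2} \leq C_1 C^2 \|\nabla(\tilde u_k - \tilde u_{k-1})\|_{L^2}$ by the Poincar\'e inequality of Theorem \ref{Thm 3.2}, exactly as in Theorem \ref{Thm 3.4}. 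Thus $\{\tilde u_k\}$ is Cauchy in $H_0^1(M)$ and converges to some $\tilde u$.

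Passing to the limit in the weak iterative equations shows that $\tilde u$ solves the reduced weak problem, so $u := \tilde u + \tilde h$ is the desired solution of (\ref{31}); uniqueness follows by applying the same contraction estimate to the difference of two solutions. The one conceptual point to be careful about, and the only place the argument departs from Theorem \ref{Thm 3.4}, is that the naive strong-form rewrite $-\Delta_g \tilde u + F(\tilde u + \tilde h) = f + \Delta_g \tilde h$ has no pointwise meaning: since $\tilde h$ is only assumed in $H^1(M)$, the term $\Delta_g \tilde h$ lies in $H^{-1}(M)$ rather than $L^2(M)$. The resolution is to work in the weak form from the outset and absorb $\langle \tilde h, v \rangle_E$ into the right-hand-side functional, after which Lax-Milgram solvability at each step and the contraction argument proceed without change.
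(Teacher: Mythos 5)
Your proposal is correct and follows essentially the same route as the paper: set $\tilde u = u - \tilde h$ and apply the iterative scheme of Theorem \ref{Thm 3.4} to the translated problem with trivial Dirichlet data. Your added observation---that $\Delta_g \tilde h$ only lies in $H^{-1}(M)$ when $\tilde h \in H^1(M)$, so the iteration must be run in weak form with $\langle \tilde h, v\rangle_E$ absorbed into the right-hand-side functional---is a useful sharpening of a point the paper only gestures at.
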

\begin{proof} For $ \tilde{u} = u - \tilde{h} $, (\ref{31}) is equivalent to
\begin{equation*}
-\Delta_{g} \tilde{u} + F(\tilde{u} + \tilde{h}) = f + \Delta_{g} \tilde{h}, \tilde{u} |_{\partial M} = 0.
\end{equation*}
\noindent We can now apply Theorem \ref{Thm 3.4}.
\end{proof}


\noindent \subsection{Neumann Boundary Condition}
In this subsection, we consider the existence and uniqueness of solutions to the  complex-valued Poisson equation
\begin{equation}\label{seventeen}
-\Delta_{g} u + F(u) = f, g(\nabla u, N) \equiv 0 \; on \; \partial M,
\end{equation}
\noindent on $(M, \partial M, g).$ Here $ N $ is the outward unit normal vector field along $ \partial M $. The main tools will be a Poincar\'e inequality and the Lax-Milgram theorem.\\
\\
We adapt Definition \ref{Def 3.1} to this situation.
\begin{definition} For $ (M ,\partial M, g) $ and $ d\mu $ as in the previous subsection, \\
(i) Set 
$$ \calC^{\perp} = \left\{u\in \calC^\infty(M): \int_{M} u d\mu = 0\right\}.$$
(ii) Set $ L_{\perp}^{2}(M) $ to be the completion of $ \calC^{\perp} $ with respect to the $L^2$ inner product on $M$, and let $ \lVert \cdot \rVert_{L_{\perp}^{2}(M)} $ be the corresponding norm. \\
\\
(iii) Set $ E $ to be the completion of $ \calC^{\perp} $ with respect to $ \langle \cdot, \cdot \rangle_{E} $, and let $ \left\lVert \cdot \right\rVert_{E} $ be the corresponding norm.\\
\\
(iv) The $ H_{\perp}^{1} $ inner product $ \langle \cdot, \cdot \rangle_{H_{\perp}^{1}} $ on $ \calC^{\perp} $ is
\begin{equation*}
\langle f, g \rangle_{H_{\perp}^{1}} = \langle f, g \rangle_{L_{\perp}^{2}} + \langle f, g \rangle_{E}, \forall f, g \in \calC^{\perp}.
\end{equation*}
\noindent Set $ H_{\perp}^{1}(M) $ to be the completion of $ \calC^{\perp} $ with respect to $ \langle \cdot, \cdot \rangle_{H_{\perp}^{1}} $, and let $ \lVert \cdot \rVert_{H_{\perp}^{1}(M)} $ be the corresponding norm.
\end{definition} 
\noindent Clearly, $ \int_{M} u = 0$ for $u \in L_{\perp}^{2}(M) $ or $ u \in H_{\perp}^{1}(M) $,  $ H_{\perp}^{1}(M) \subset L_{\perp}^{2}(M) $, and $\lVert u \rVert_{E} \leq \lVert u \rVert_{H_\perp^{1}(M)}$.\\
\\
We know we need a Poincar\'e inequality for $ u \in H_{\perp}^{1}(M) $, but now the proof is more complicated.
\begin{theorem}\label{Thm 3.5} Fix $ (M,  \partial M, g) $. For every element $ u \in H_{\perp}^{1}(M) $, there exists a positive constant $ C > 0 $, which only depends on $ (M, \partial M, g) $ and a choice of a finite cover on $ M $, such that
\begin{equation}\label{nineteen}
\left\lVert u \right\rVert_{L_{\perp}^{2}(M)} \leqslant C \left\lVert u \right\rVert_{E}. 
\end{equation}
\end{theorem}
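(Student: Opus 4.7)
The plan is to argue by contradiction using the Rellich--Kondrachov compact embedding $H^{1}(M) \hookrightarrow L^{2}(M)$. This embedding is precisely the place where the statement's dependence on ``a finite cover on $M$'' enters: one covers $M$ by finitely many charts, uses a partition of unity subordinate to the cover, and reduces the compactness statement to the compact Rellich embedding for bounded Lipschitz domains in $\R^{n}$.

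Suppose the claimed inequality (\ref{nineteen}) fails. Then, for every $k \in \N$, there exists $u_{k} \in H_{\perp}^{1}(M)$ with $\lVert u_{k} \rVert_{L_{\perp}^{2}(M)} = 1$ and $\lVert u_{k} \rVert_{E} < 1/k$. In particular $\lVert u_{k} \rVert_{H_{\perp}^{1}(M)}^{2} = 1 + \lVert u_{k} \rVert_{E}^{2}$ is bounded uniformly in $k$, so the sequence is bounded in $H^{1}(M)$. By Rellich--Kondrachov, there is a subsequence (not relabelled) and a function $u \in L^{2}(M)$ such that $u_{k} \to u$ strongly in $L^{2}(M)$. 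In particular $\lVert u \rVert_{L^{2}(M)} = 1$, and since $\int_{M} u_{k} \, d\mu = 0$ for every $k$, continuity of the integral against the constant function $1 \in L^{2}(M)$ gives $\int_{M} u \, d\mu = 0$, so $u \in L_{\perp}^{2}(M)$.

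Next I would promote this $L^{2}$ convergence to convergence in $H^{1}(M)$. Because $\lVert u_{k} \rVert_{E} \to 0$, the gradients $\nabla u_{k}$ themselves converge to $0$ in $L^{2}$, so $\{u_{k}\}$ is Cauchy in the $H^{1}$ norm and hence converges to $u$ in $H^{1}(M)$. It follows that $\nabla u = 0$ in the weak sense and $\lVert u \rVert_{E} = 0$. Since $M$ is connected, a weakly differentiable function with vanishing weak gradient is constant on $M$ (this is the one technical step that requires the connectedness hypothesis on $M$, and can be justified locally in charts via the standard $\R^{n}$ result, then globalized by connectedness and the finite cover). Combining this with $\int_{M} u \, d\mu = 0$ forces $u \equiv 0$, which contradicts $\lVert u \rVert_{L^{2}(M)} = 1$.

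The main obstacle, and the reason the proof is non-constructive in $C$, is that compactness gives only existence of the subsequential limit without yielding any quantitative control on the resulting constant $C$; so there is no explicit formula analogous to $\lambda_{1}^{-1/2}$ in Proposition \ref{Prop 2.1}. A secondary subtlety is verifying that $H_{\perp}^{1}(M)$ is closed in $H^{1}(M)$ and that the zero-mean condition is preserved in the $L^{2}$ limit; both follow from the continuity of the bounded linear functional $v \mapsto \int_{M} v \, d\mu$ on $L^{2}(M)$.
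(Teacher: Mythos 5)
Your argument is correct, but it is a genuinely different proof from the one in the paper. You run the classical contradiction/compactness argument: normalize a failing sequence, extract an $L^{2}$-convergent subsequence via Rellich--Kondrachov (valid here since $M$ is compact with $\mathcal{C}^{1}$, hence Lipschitz, boundary), upgrade to $H^{1}$ convergence because the energies vanish, and use connectedness of $M$ (a standing hypothesis in Section 3) plus the mean-zero condition to force the limit to be $0$, contradicting the normalization; the closedness of the mean-zero condition under $L^{2}$ limits is handled correctly via continuity of $v\mapsto\int_M v\,d\mu$. The paper instead gives a constructive, self-contained argument: it decomposes a mean-zero $u$ as $u=\sum_i u_i$ with each $u_i$ mean-zero and supported in a single chart (Lemma \ref{Lem 3.1}), proves a pointwise Riesz-potential bound for the deviation from the average on convex chart images (Lemma \ref{Lem 3.3}), combines it with the convolution inequality of Lemma \ref{Lem 3.2} to get a local Poincar\'e inequality (Proposition \ref{Prop 3.1}), and then estimates $\int_M u\phi\,d\mu$ chart by chart, finally taking $\phi=\bar u$. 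What your route buys is brevity and a weaker-looking dependence (the constant depends only on $(M,\partial M,g)$, with the finite cover entering only through the proof of the compact embedding), at the cost of being non-constructive and resting on Rellich--Kondrachov for manifolds with boundary; the paper's route is longer but elementary (only convolution estimates and a partition-of-unity decomposition) and yields a constant that is in principle computable from the cover, the partition of unity, the cutoff $\sigma$, the bounds $D_i$ on $\sqrt{\det g}$, and the diameters and volumes of the convex chart images, which is exactly why the theorem is phrased as depending on a choice of finite cover.
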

\begin{remark} By Definition 3.4, we know that  $ L_{\perp}^{2}(M) $, $ H^1(M) $, and $ H_{\perp}^{1}(M) $ are completions of $ \mathcal{C}^{\perp} $, so it suffices to prove the theorem for $ u \in \mathcal{C}^{\perp} $.
\end{remark}

\noindent The proof of Theorem \ref{Thm 3.5} involves a series of lemmas.  The first technical lemma  drops the compactness assumption on $M$.

\begin{lemma}\label{Lem 3.1} Let $M$ be a smooth, connected, Riemannian manifold with $ \mathcal{C}^{1} $ boundary and a finite cover by coordinate charts. Let $ u \in C^\infty(M)$ satisfy $ \int_{M} u d\mu= 0 $. For any finite coordinate chart cover $ \lbrace U_{i} \rbrace $ of $ M $, we can write $ u = \sum_{i} u_{i} $, where each $ u_{i} $ is smooth, supported in $ U_{i} $, and satisfies 
\begin{equation*}
\int_{U_{i}} u_{i}d\mu = 0.
\end{equation*}
\end{lemma}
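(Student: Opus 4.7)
The plan is to start from a partition of unity subordinate to the cover and then iteratively correct the resulting pieces, transferring integral mass between neighboring charts, so that every piece has vanishing integral while their sum remains $u$.

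Concretely, I would fix a smooth partition of unity $\{\chi_i\}$ subordinate to $\{U_i\}$ and set $v_i := \chi_i u$, so that each $v_i$ is smooth with $\mathrm{supp}\, v_i \subset U_i$ and $\sum_i v_i = u$. Let $a_i := \int_M v_i\, d\mu$. Then $\sum_i a_i = \int_M u\, d\mu = 0$, although in general no individual $a_i$ vanishes, so integral mass must be shifted between charts in a way that preserves supports.

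To effect this shift, I would exploit the connectedness of $M$ through the intersection graph $G$ on the index set, with an edge between $i$ and $j$ whenever $U_i \cap U_j \neq \emptyset$. This graph is connected, for otherwise $M$ would split as a disjoint union of nonempty open sets corresponding to the components of $G$. Pick a spanning tree $T$ of $G$, root it at an arbitrary vertex, and for every edge $\{i,j\}$ of $T$ fix a bump function $\phi_{ij} \in \mathcal{C}_{c}^{\infty}(U_i \cap U_j)$ with $\int_M \phi_{ij}\, d\mu = 1$, which exists because $U_i \cap U_j$ is a nonempty open subset of $M$. Then I would clear the integrals leaf by leaf: for a current leaf $i$ of $T$ other than the root, with parent $j$, replace $v_i$ by $v_i - a_i \phi_{ij}$ and $v_j$ by $v_j + a_i \phi_{ij}$. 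This keeps $\sum_k v_k = u$, preserves the supports inside the relevant charts (since $\phi_{ij}$ is supported in $U_i \cap U_j$), sets $\int v_i = 0$, and updates $a_j$ to $a_j + a_i$. Prune $i$ and repeat. When only the root is left, conservation of the total integral forces its integral to be $0$ as well, and setting $u_i$ equal to the final $v_i$ gives the desired decomposition.

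The main obstacle is essentially the combinatorial bookkeeping — ensuring that supports remain inside the correct chart throughout the recursion and that the leaf-pruning terminates at the root — while the single geometric ingredient is the connectedness of the nerve graph, which in turn follows from the connectedness of $M$. All analytic requirements reduce to the standard existence of smooth bump functions with unit integral in any given nonempty open subset of $M$.
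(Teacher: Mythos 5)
Your argument is correct, and it is organized differently from the paper's. The paper proceeds by induction on the number of charts: taking a partition of unity $\{\chi_i\}$, it peels off $U_1$ by choosing a single bump $\sigma$ with unit integral supported in $U_1 \cap \bigl(\bigcup_{i\geqslant 2} U_i\bigr)$, sets $u_1 = \chi_1 u - I_1\sigma$ and lumps the rest into $u_1' = \sum_{i\geqslant 2}\chi_i u + I_1\sigma$, and then applies the induction hypothesis to $u_1'$ on the union of the remaining $k-1$ charts. Your version performs the same basic move --- redistributing integral mass through unit-integral bumps supported in overlaps --- but organizes it along a spanning tree of the nerve graph, clearing one leaf at a time and pushing its mass to its parent, with the root's integral vanishing automatically by conservation. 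What your organization buys is a cleaner treatment of connectedness: the paper's induction implicitly requires that the union of the remaining charts can again play the role of the connected manifold in the lemma (or that the charts be reordered so that this works), whereas your nerve-graph argument makes the needed connectedness explicit and handles it once and for all, at the mild cost of choosing one bump per tree edge supported in a pairwise intersection $U_i\cap U_j$ rather than a single bump per induction step supported in the intersection of one chart with the union of the others. Both proofs rest on the same two facts: existence of a subordinate smooth partition of unity and of smooth bump functions with unit $d\mu$-integral in any nonempty open set.
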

\begin{proof} We show this by induction on the number of charts. The case of one chart is trivial.\\
\\
Assume that the statement of lemma holds for $ k - 1 $ charts which covers $ M $. For a finite cover $ \lbrace U_{i} \rbrace_{i=1}^{k} $ by $ k $ charts, we can choose a smooth function $ \sigma $ such that
\begin{equation*}
\text{supp}(\sigma) \subset U_{1} \cap \left(\bigcup_{i=2}^{k} U_{i}\right)\ 
{\rm and}\ \int_{U_{1} \cap (\bigcup_{i=2}^{k} U_{i})} \sigma = 1.
\end{equation*}
\noindent Here we have dropped $d\mu$ from the notation. Take a smooth partition of unity $ \lbrace \chi_{i} \rbrace_{i=1}^{k} $ subordinate to $ \lbrace U_{i} \rbrace_{i=1}^{k} $. Set $ I_{i} = \int_{M} \chi_{i} u $. Since $ \int_{M} u = 0 $, it follows that
\begin{equation*}
\int_{M} \chi_{1} u = -\sum_{i=2}^{k} \int_{M} \chi_{i} u.
\end{equation*}
\noindent Set
\begin{equation*}
u_{1} = \chi_{1} u - I_{1}\sigma, \ u_{1}' = \sum_{i=2}^{k} \chi_{i} u + I_{1} \sigma.
\end{equation*}
\noindent Then $ \text{supp}(u_{1}) \subset U_{1}, \text{supp}(u_{1}') \subset \bigcup_{i=2}^{k} U_{i} $, and we have
\begin{equation*}
\int_{M} u_{1} = \int_{U_{1}} u_{1} = 0,\  \int_{M} u_{1}' = \int_{\bigcup_{i=2}^{k} U_{i}} u_{1}' = 0.
\end{equation*}
\noindent By induction, we can write $ u_{1}' = \sum_{i=2}^{k} u_{i} $, with $ \text{supp}(u_{i}) \subset U_{i}$, and $ \int_{M} u_{i} = 0 $. This completes the induction step.
\end{proof}


\noindent The next lemma is a special case of Schur's Criterion, but has a short proof of independent interest.
\begin{lemma}\label{Lem 3.2} If $ f \in L^{1}(\mathbb{R}^{n}) $ and $ g \in L^{2}(\mathbb{R}^{n}) $, then
\begin{equation}\label{twenty}
\left\lVert f * g \right\rVert_{L^{2}(\mathbb{R}^{n})} \leqslant \left\lVert f \right\rVert_{L^{1}(\mathbb{R}^{n})} \left\lVert g \right\rVert_{L^{2}(\mathbb{R}^{n})}.
\end{equation}
\end{lemma}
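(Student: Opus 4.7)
The plan is to give a direct, self-contained proof via Cauchy--Schwarz and Fubini--Tonelli, which is the most elementary route and makes the $L^1$--$L^2$ interaction transparent. An equally valid option is to invoke Minkowski's integral inequality, but Cauchy--Schwarz keeps the argument at the level of tools already used throughout the paper.

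First, I would write out the convolution pointwise as $(f*g)(x) = \int_{\mathbb{R}^n} f(y) g(x-y)\, dy$, and then use the ``splitting trick'' $|f(y)| = |f(y)|^{1/2} \cdot |f(y)|^{1/2}$ inside the integrand. Applying Cauchy--Schwarz in the $y$-variable gives the pointwise bound
\begin{equation*}
|(f*g)(x)|^{2} \leq \left(\int_{\mathbb{R}^n} |f(y)|\, dy\right)\left(\int_{\mathbb{R}^n} |f(y)|\, |g(x-y)|^{2}\, dy\right) = \|f\|_{L^{1}} \int_{\mathbb{R}^n} |f(y)|\, |g(x-y)|^{2}\, dy.
\end{equation*}

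Next, I would integrate this inequality over $x \in \mathbb{R}^n$ and apply Tonelli's theorem to swap the order of integration (the integrand is nonnegative and measurable, so this is justified without integrability assumptions beyond those given). After the swap, the inner integral in $x$ becomes $\int_{\mathbb{R}^n} |g(x-y)|^{2}\, dx = \|g\|_{L^{2}}^{2}$ by translation invariance of Lebesgue measure, independent of $y$. This yields
\begin{equation*}
\|f*g\|_{L^{2}}^{2} \leq \|f\|_{L^{1}} \int_{\mathbb{R}^n} |f(y)|\, \|g\|_{L^{2}}^{2}\, dy = \|f\|_{L^{1}}^{2}\, \|g\|_{L^{2}}^{2}.
\end{equation*}
Taking square roots gives (\ref{twenty}).

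The only technical point worth flagging, rather than a real obstacle, is measurability of $(x,y) \mapsto f(y) g(x-y)$ and finiteness of $(f*g)(x)$ for almost every $x$; both follow from standard Fubini/Tonelli bookkeeping once the nonnegative estimate above is established, so one can work with $|f|$ and $|g|$ first and then extend to the signed (complex) case by dominated convergence. Thus the proof reduces to Cauchy--Schwarz plus a single application of Tonelli, with no further input needed.
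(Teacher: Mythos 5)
Your proof is correct, and it takes a route that is genuinely different in structure from the one in the paper, even though both rest on Cauchy--Schwarz, Fubini--Tonelli, and translation invariance of Lebesgue measure. You apply Cauchy--Schwarz once, pointwise in the $y$-variable, after the splitting $|f(y)| = |f(y)|^{1/2}\cdot|f(y)|^{1/2}$, obtaining $|(f*g)(x)|^{2} \leq \lVert f\rVert_{L^{1}} \int |f(y)|\,|g(x-y)|^{2}\,dy$, and then integrate in $x$ and swap the order with Tonelli. The paper instead expands $\lVert f*g\rVert_{L^{2}}^{2}$ directly into a triple integral over $(x,y,z)$ involving the product $|f(y)g(x-y)|\,|f(z)g(x-z)|$, exchanges the order of integration, and applies Cauchy--Schwarz to the inner $x$-integral $\int |g(x-y)|\,|g(x-z)|\,dx \leq \lVert g\rVert_{L^{2}}^{2}$, finishing by integrating $|f(y)|\,|f(z)|$ to get $\lVert f\rVert_{L^{1}}^{2}$. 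Your version is the standard proof of this special case of Young's inequality and generalizes immediately (replace Cauchy--Schwarz by H\"older to get $L^{1}*L^{p}\subset L^{p}$), and it only needs one application of Cauchy--Schwarz; the paper's version avoids the splitting trick at the cost of handling a triple integral with two shifted copies of $g$. Your closing remark on measurability and a.e.\ finiteness of the convolution is handled correctly by working with $|f|$, $|g|$ first, so there is no gap.
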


\begin{proof} For $x\in \R^n$,  the $ L^{2} $-norm is translation invariant, i.e.,
\begin{equation}\label{transinv}
\left\lVert f(\cdot) \right\rVert_{L^{2}(\mathbb{R}^{n})} = \left\lVert f(\cdot - x) \right\rVert_{L^{2}(\mathbb{R}^{n})},
\end{equation}
\noindent which follows from the translation invariance of Lebesgue measure. Therefore, 
\begin{align*}
\left\lVert f * g \right\rVert_{L^{2}(\mathbb{R}^{n})}^{2} & =\int_{\mathbb{R}^{n}} \left\lvert f * g(x) \right\rvert^{2} dx = \int_{\mathbb{R}^{n}} \left\lvert \int_{\mathbb{R}^{n}} f(y)g(x - y) dy \right\rvert^{2} dx \\
& \leqslant \int_{\mathbb{R}^{n}} \int_{\mathbb{R}^{n}} \int_{\mathbb{R}^{n}} \left\lvert f(y) g(x - y) \right\rvert \left\lvert f(z) g(x - z) \right\rvert dy dz dx \\
& = \int_{\mathbb{R}^{n}} \int_{\mathbb{R}^{n}} \left(\int_{\mathbb{R}^{n}} \left\lvert g(x - y) \right\rvert \left\lvert g(x - z) \right\rvert dx\right) \left\lvert f(y) \right\rvert \left\lvert f(z) \right\rvert dy dz \\
& \leqslant \int_{\mathbb{R}^{n}} \int_{\mathbb{R}^{n}} \left\lvert f(y) \right\rvert \left\lvert f(z) \right\rvert dy dz \cdot \left\lVert g \right\rVert_{L^{2}(\mathbb{R}^{n})}^{2} \\
& \leqslant \left\lVert f \right\rVert_{L^{1}(\mathbb{R}^{n})}^{2} \left\lVert g \right\rVert_{L^{2}(\mathbb{R}^{n})}^{2},
\end{align*}
\noindent where we have used Cauchy-Schwarz and (\ref{transinv}).
\end{proof}
\noindent For the proof of Theorem \ref{Thm 3.5}, we need a local Poincar\'e inequality. The point is that charts on a manifold with boundary fall into two types: interior charts diffeomorphic to open balls in $ \mathbb{R}^{n} $, and boundary charts diffeomorphic to open balls in $ \mathbb{H}^{n} $ which meet $ \partial \mathbb{H}^{n} $. Since the boundary charts determine sets which are not open in $ \mathbb{R}^{n}$, we need a technical version of the Poincar\'e inequality that treats non-open sets.\\
\\
The next two results are modifications of proofs in \cite{Don}, where we drop the usual openness assumption.

\begin{lemma}\label{Lem 3.3} \cite[Thm. 2]{Don} Let $ \Omega $ be a bounded, connected, convex subset of $\mathbb{R}^{n} $and let $ \phi $ be a smooth function on an open, bounded set $ \Omega' \supset \overline{\Omega}.$ Let $ V,d $ be the volume and diameter of $ \Omega $, respectively. Let $ \text{Ave}(\phi) : = \frac{1}{V} \int_{\Omega} \phi dx $  be the average of $ \phi $ with respect to Lebesgue measure. Then for $ x \in \Omega $, we have
\begin{equation}\label{twentyone}
\left\lvert \phi(x) - \text{Ave}(\phi) \right\rvert \leqslant \frac{d^{n}}{nV} \int_{\Omega} \frac{1}{\left\lvert x - y \right\rvert^{n-1}} \left\lvert \nabla \phi(y) \right\rvert dy.
\end{equation}
\end{lemma}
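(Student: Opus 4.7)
The plan is to exploit convexity by expressing $\phi(x) - \phi(y)$ as the integral of the directional derivative of $\phi$ along the line segment from $x$ to $y$, which lies in $\Omega$ by convexity. First I would write, for fixed $x \in \Omega$ and any $y \in \Omega$, setting $r = |y-x|$ and $\omega = (y-x)/|y-x|$, the fundamental theorem of calculus identity
\begin{equation*}
\phi(x) - \phi(y) = -\int_{0}^{r} \nabla \phi(x + t\omega) \cdot \omega \, dt.
\end{equation*}
The smoothness of $\phi$ on the open set $\Omega' \supset \overline{\Omega}$ is what makes this legal even though $\Omega$ itself need not be open: the radial derivative exists everywhere along the segment.

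Next I would average in $y$ over $\Omega$ and pass to polar coordinates centered at $x$, writing $y = x + r\omega$ with $dy = r^{n-1} dr \, d\omega$. For each direction $\omega \in S^{n-1}$, let $R(\omega) \leqslant d$ denote the length of the chord of $\Omega$ starting at $x$ in direction $\omega$; convexity makes this well-defined. This yields
\begin{equation*}
\phi(x) - \mathrm{Ave}(\phi) = -\frac{1}{V}\int_{S^{n-1}} \int_{0}^{R(\omega)} \int_{0}^{r} \nabla\phi(x + t\omega)\cdot\omega \, dt \, r^{n-1} \, dr \, d\omega.
\end{equation*}
I would then apply Fubini's theorem on the inner double integral (in $t$ and $r$) to get the $t$-integral outside and compute $\int_{t}^{R(\omega)} r^{n-1} dr = (R(\omega)^{n} - t^{n})/n \leqslant d^{n}/n$.

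After taking absolute values and using $|\nabla\phi(x+t\omega)\cdot\omega| \leqslant |\nabla\phi(x+t\omega)|$, this produces
\begin{equation*}
|\phi(x) - \mathrm{Ave}(\phi)| \leqslant \frac{d^{n}}{nV} \int_{S^{n-1}} \int_{0}^{R(\omega)} |\nabla\phi(x + t\omega)| \, dt \, d\omega.
\end{equation*}
The final step is to convert back to Cartesian coordinates using $dt\, d\omega = t^{-(n-1)} dy$, so that the $1/t^{n-1} = 1/|y-x|^{n-1}$ factor appears, and the region of integration becomes exactly $\Omega$. This gives the stated inequality.

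I do not expect any step to be a real obstacle, since the argument is essentially a careful bookkeeping of the standard integral representation. The only mild subtlety is the one flagged in the remark preceding the lemma, namely that $\Omega$ need not be open in $\R^{n}$ (it may be a half-ball coming from a boundary chart on the manifold). This is handled once and for all by assuming $\phi$ is smooth on an honestly open set $\Omega' \supset \overline{\Omega}$: the radial segments used above stay inside $\Omega \subset \Omega'$ by convexity, so $\nabla\phi$ is defined and smooth everywhere the calculation requires it, and no boundary-regularity issue arises.
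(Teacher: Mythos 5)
Your proposal is correct and follows essentially the same route as the paper's proof: the fundamental theorem of calculus along radial segments (legitimate by convexity), polar coordinates centered at $x$, a Fubini swap in the $(t,r)$ variables giving the factor $(R^{n}-t^{n})/n \leqslant d^{n}/n$, and reinsertion of $t^{n-1}$ to return to Cartesian coordinates and produce the kernel $|x-y|^{1-n}$. The only cosmetic difference is that the paper first normalizes $x=0$ and $\phi(0)=0$ and then estimates $\mathrm{Ave}(\phi)$ directly, whereas you keep $\phi(x)-\phi(y)$ explicit; the content is identical.
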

\noindent Since $ \Omega $ is not necessarily open, we need the openness of $ \Omega' $ to define derivatives, especially at the boundary points of $ \Omega $. 
\begin{proof}
By shifting $\Omega$ by $-x$, we may assume that $ x = 0 \in \Omega $. Replacing $\phi$ by $\phi-\phi(0)$, we may also assume
that $ \phi(0) = 0 $. \\
\\
Since $\Omega$ is convex, we may set $ R(\theta) $ to be the length of the portion of the ray at the origin at angular variable $ \theta $ which lies in $ \Omega $. Therefore,
\begin{align*}
\text{Ave}(\phi) &= \frac{1}{V} \int_{0}^{2\pi} \int_{0}^{R(\theta)} \phi(r, \theta) r^{n-1} drd\theta.
 = \frac{1}{V} \int_{0}^{2\pi} \int_{0}^{R(\theta)} \left(\int_{0}^{r} \frac{\partial \phi}{\partial s} ds
 \right) r^{n-1}drd\theta\\
 &= \frac{1}{V} \int_{0}^{2\pi} \int_{s=0}^{R(\theta)} \left(\int_{r = s}^{R(\theta)} r^{n-1} dr\right) \frac{\partial \phi}{\partial s} ds d\theta \\
& = \frac{1}{V} \int_{0}^{2\pi} \int_{s=0}^{R(\theta)} \left(\frac{1}{n} (R(\theta)^{n} - s^{n}\right) \frac{\partial \phi}{\partial s} ds d\theta.
\end{align*}
\noindent By convexity, $ 0 < R(\theta) < d $, and it follows that
\begin{align*}
\left\lvert \text{Ave}(\phi) \right\rvert & \leqslant \frac{1}{V} \int_{0}^{2\pi} \int_{s=0}^{R(\theta)} \frac{d^{n}}{n} \left\lvert \frac{\partial \phi}{\partial s} \right\rvert ds d\theta = \frac{d^{n}}{nV} \int_{0}^{2\pi} \int_{s=0}^{R(\theta)} \left\lvert \frac{\partial \phi}{\partial s} \right\rvert \cdot \frac{1}{s^{n-1}} \cdot s^{n-1} ds d\theta \\
& \leqslant \frac{d^{n}}{nV} \int_{0}^{2\pi} \int_{s=0}^{R(\theta)} \left\lvert \nabla \phi \right\rvert \cdot \frac{1}{s^{n-1}} \cdot s^{n-1} ds d\theta = \frac{d^{n}}{nV} \int_{\Omega} \frac{1}{\left\lvert y \right\rvert^{n-1}} \left\lvert \nabla \phi(y) \right\rvert dy.
\end{align*}
\end{proof}

\begin{proposition} \label{Prop 3.1}
\cite[Cor. 1]{Don}, (Local Poincar\'e Inequality)  In the notation of Lemma \ref{Lem 3.3},  there exists a constant $ C > 0 $, which only depends on $ \Omega \subset \R^n $ and $n$, such that
\begin{equation}\label{twentytwo}
\int_{\Omega} \left\lvert \phi(x) - \text{Ave}(\phi) \right\rvert^{2} dx \leqslant C \int_{\Omega} \left\lvert \nabla \phi \right\rvert^{2} dx.
\end{equation}
\end{proposition}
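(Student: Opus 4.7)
The plan is to take the pointwise bound from Lemma \ref{Lem 3.3}, square both sides, integrate over $\Omega$, and recognize the right-hand side as an $L^2$ norm of a convolution so that Lemma \ref{Lem 3.2} applies. Concretely, for any $x \in \Omega$ the integrand on the right of (\ref{twentyone}) has the form $K(x-y)|\nabla\phi(y)|$ with $K(z) = |z|^{-(n-1)}$, so after extending $|\nabla \phi|$ by zero outside $\Omega$, the inner integral is literally a convolution $(K * |\nabla \phi|)(x)$.

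The only obstacle, and it is small, is that the raw kernel $K(z) = |z|^{-(n-1)}$ is locally integrable near the origin but not in $L^1(\R^n)$, so Lemma \ref{Lem 3.2} does not apply directly. To fix this, I will exploit the fact that $\Omega$ has finite diameter $d$: for $x, y \in \Omega$ we always have $|x - y| \leqslant d$, so we may harmlessly replace $K$ by the truncated kernel $\tilde K(z) = |z|^{-(n-1)} \chi_{B(0,d)}(z)$ without changing the value of the integral in (\ref{twentyone}). A polar-coordinates computation then gives $\|\tilde K\|_{L^1(\R^n)} = \omega_{n-1} d$, where $\omega_{n-1}$ is the surface measure of $S^{n-1}$.

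With this fix, the sequence of inequalities is routine. First, from Lemma \ref{Lem 3.3},
\begin{equation*}
|\phi(x) - \text{Ave}(\phi)| \leqslant \frac{d^{n}}{nV}\,\bigl(\tilde K * (|\nabla \phi|\chi_{\Omega})\bigr)(x)
\quad \text{for all } x \in \Omega.
\end{equation*}
Squaring, integrating over $\Omega$, extending the domain of integration to $\R^n$ (which only increases the integral), and applying Lemma \ref{Lem 3.2} yields
\begin{equation*}
\int_{\Omega} |\phi(x) - \text{Ave}(\phi)|^{2}\, dx
\leqslant \Bigl(\frac{d^{n}}{nV}\Bigr)^{2} \|\tilde K\|_{L^{1}(\R^{n})}^{2}\, \|\nabla \phi\|_{L^{2}(\Omega)}^{2}
= \Bigl(\frac{\omega_{n-1}\, d^{n+1}}{nV}\Bigr)^{2} \int_{\Omega} |\nabla \phi|^{2}\, dx,
\end{equation*}
which gives the desired inequality with an explicit constant $C = (\omega_{n-1} d^{n+1}/(nV))^{2}$ depending only on $\Omega$ and $n$.

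I expect no real hard step: Lemma \ref{Lem 3.3} supplies the pointwise bound, and Lemma \ref{Lem 3.2} supplies the $L^{1} * L^{2} \to L^{2}$ estimate; the only thing one must be alert to is the truncation of the Riesz-type kernel so that it lands in $L^{1}$. Note also that the openness of $\Omega'$ is not needed here (it was needed in Lemma \ref{Lem 3.3} merely to define $\nabla \phi$ at boundary points of $\Omega$), which is precisely why this formulation is useful for the boundary-chart case in the proof of Theorem \ref{Thm 3.5}.
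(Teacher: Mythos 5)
Your proposal is correct and follows essentially the same route as the paper: truncate the Riesz-type kernel $|z|^{-(n-1)}$ to the ball of radius $d$ (the paper builds this truncation directly into its kernel $F$), extend $|\nabla\phi|$ by zero outside $\Omega$, and apply Lemma \ref{Lem 3.2} to the resulting $L^1 * L^2$ convolution. Your write-up is in fact cleaner than the paper's, which contains typographical slips (defining $G = |\nabla\phi|^2$ instead of $|\nabla\phi|$ and writing an $L^1$ norm where an $L^2$ norm of $G$ is meant), and you additionally record an explicit constant.
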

\begin{proof} Define two functions on $ \mathbb{R}^{n} $, 
\begin{equation*}
F(x) = \begin{cases} \frac{d^{n}}{nV} \frac{1}{\left\lvert x \right\rvert^{n-1}}, & \left\lvert x \right\rvert \leqslant d, \\ 0, & \left\lvert x \right\rvert > d, \end{cases} \ {\rm and}\  G(x) = \begin{cases} \left\lvert \nabla \phi(x) \right\rvert^{2}, & x \in \Omega, \\ 0, & x \notin \Omega. \end{cases}
\end{equation*}
\noindent Clearly $ F \in L^{1}(\mathbb{R}^{n}) $ and $ G \in L^{2}(\mathbb{R}^{n}) $. Using Lemmas \ref{Lem 3.2} and \ref{Lem 3.3},  we  get 
\begin{align*}
\int_{\Omega} \left\lvert \phi(x) - \text{Ave}(\phi) \right\rvert^{2} dx & \leqslant \int_{\Omega} \left(\int_{\Omega} \frac{d^{n}}{nV} \frac{1}{\left\lvert x - y \right\rvert^{n-1}} \left\lvert \nabla \phi(y) \right\rvert dy\right)^{2} dx \\
& \leqslant \int_{\Omega} \left\lvert F * G(x) \right\rvert^{2} dx \leqslant \int_{\mathbb{R}^{n}} \left\lvert F * G(x) \right\rvert^{2} dx \\
& = \left\lvert F * G \right\rvert_{L^{2}(\mathbb{R}^{n})}^{2} \leqslant \left\lvert F \right\rvert_{L^{1}(\mathbb{R}^{n})}^{2} \left\lvert G \right\rvert_{L^{1}(\mathbb{R}^{n})}^{2} \\
& = \left\lvert F \right\rvert_{L^{1}(\mathbb{R}^{n})}^{2} \cdot \left\lvert \nabla \phi \right\rvert_{L^{2}(\mathbb{R}^{n})}^{2}.
\end{align*}
\noindent  $ F $ is integrable and independent of $ \phi $, so setting $ C = \left\lvert F \right\rvert_{L^{1}(\mathbb{R}^{n})}$ proves (\ref{twentytwo}).
\end{proof} 
\noindent Using Lemmas \ref{Lem 3.1} - \ref{Lem 3.3} and Proposition \ref{Prop 3.1}, we are now ready to prove the Poincar\'e inequality in Theorem \ref{Thm 3.5}. \\
\\
{\it Proof of Theorem 3.5.}
To show (\ref{nineteen}), we estimate $ \int_{M} u \phi d\mu $ for $ u, \phi \in \mathcal{C}^{\perp} $.
Recall that in a local chart with coordinates $(x^1,\ldots, x^n)$, the volume form is
$d\mu = \sqrt{\det g} dx,$ where $dx = dx^1\wedge\ldots\wedge dx^n.$  We want $ \sqrt{\det g} $ to be bounded 
on each chart.  In fact, for any open cover $ \lbrace U_{i}^{'} \rbrace $ of $ M $, there exists a new open cover $ \lbrace U_{i} \rbrace $ of $ M $, each $ \overline{U_{i}} \subset U_{i}^{'} $ \cite[Ch. 13]{Lee}. As a result,
there exists $D_i$ such that $\sqrt{\det g}< D_i$ is bounded on each $U_i.$\\
\\
By Lemma \ref{Lem 3.1}, we can write
\begin{equation*}
\int_{M} u \phi d\mu =  
\sum_{i=1}^{k} \int_{U_{i}} u_{i} \phi d\mu \ {\rm and} \int_{U_i} u_i d\mu = 0.
\end{equation*}
Each $ U_{i} $ is diffeomorphic to  an open ball, i.e. a convex, bounded and connected subset $V_i$ in either $ \mathbb{R}^{n} $ or $ \mathbb{H}^{n} $. We  denote the functions on $V_i$ corresponding to $u_i, \phi,$ by $ u'_{i}, \phi_i', $ 
respectively. \\
\\
On each $ U_{i} $, we have
\begin{align*}
\lVert \phi - \text{Ave}(\phi)) \rVert_{L^2(U_i)} & = \left\lVert \phi' - \text{Ave}(\phi')\right\rVert_{L^2(V_i, g)} = \int_{V_i}\left\lvert \phi' - \text{Ave}(\phi')\right\rvert^{2} \sqrt{\det(g)} dx \\
& \leqslant D_{i} \int_{V_{i}} \left\lvert \phi' - \text{Ave}(\phi') \right\rvert^{2} dx.
\end{align*}
\noindent By Lemma \ref{Lem 3.1}, we can write $u = \sum_i u_i$ with  $0= \int_{U_{i}} u_{i} =\text{Ave}(\phi) \cdot\int_{U_{i}} u_{i} $, so by Proposition \ref{Prop 3.1}
\begin{align*}
\left\lvert \int_{M} u\phi d\mu \right\rvert & \leqslant \sum_{i=1}^{k} \left\lvert \int_{U_{i}} u_{i} \phi d\mu \right\rvert = \sum_{i=1}^{k} \left\lvert \int_{U_{i}} u_{i} (\phi - \text{Ave}(\phi)) d\mu \right\rvert \\
& \leqslant \sum_{i=1}^{k} \left\lVert u_{i} \right\rVert_{L_{\perp}^{2}(U_{i})} \left\lVert \phi - \text{Ave}(\phi) \right\rVert_{L^{2}(U_{i})} \leqslant \sum_{i=1}^{k} \left\lVert u_{i} \right\rVert_{L_{\perp}^{2}(U_{i})}\cdot D_i \left\lVert \nabla \phi \right\rVert_{L^{2}(U_{i})}\\
& \leqslant C \left\lVert u \right\rVert_{L_{\perp}^{2}(M)} \left\lVert \phi \right\rVert_{E^{'}}.
\end{align*}
\noindent Here $ C > 0 $ only depends on $ (M, g) $ and the finite cover we choose. 
The last inequality is not immediate. We justify it in the situation where $ M $ is covered by two charts, and the general case follows inductively as in the proof of Lemma \ref{Lem 3.1}. \\
\\
\noindent 
First, it is clear that $ \lVert \nabla \phi \rVert_{L^{2}(U_{i})} \leqslant \lVert \phi \rVert_{E^{'}} $. Now for $ \lVert u_{i} \rVert_{L_{\perp}^{2}}, i = 1, 2 $, as in Lemma \ref{Lem 3.1} we can write
\begin{equation*}
    u_{1} = \chi_{1}u - I_{1}\sigma, u_{2} = \chi_{2} u + I_{1} \sigma.
\end{equation*}
\noindent Thus we can estimate that
\begin{align*}
     \lVert u_{1} \rVert_{L_{\perp}^{2}(U_{1})} &\leqslant \lVert \chi_{1} u \rVert_{L^{2}(U_{1})} + \lVert I_{1}\sigma \rVert_{L^{2}(U_{1})} \leqslant \lVert \chi_{1} u \rVert_{L^{2}(M)} + \lvert I_{1} \rvert \lVert \sigma \rVert_{L^{2}(U_{1})} \\
    &\leqslant  \lVert \chi_{1} \rVert_{L^{2}(M)} \lVert u \rVert_{L_{\perp}^{2}(M)} + \left| \int_{U_{1}} \chi_{1} u \rvert \lVert \sigma \rVert_{L^{2}(M)}\right|\\
    &\leqslant \lVert \chi_{1} \rVert_{L^{2}(M)} \lVert u \rVert_{L_{\perp}^{2}(M)} + \int_{U_{1}} \lvert \chi_{1} \rvert \lvert u \rvert \cdot \lVert \sigma \rVert_{L^{2}(M)} \\
    &\leqslant  \lVert \chi_{1} \rVert_{L^{2}(M)} \lVert u \rVert_{L_{\perp}^{2}(M)} ( 1 + \lVert \sigma \rVert_{L^{2}(M)}) \leqslant C_{1} \lVert u \rVert_{L_{\perp}^{2}(M)}.
\end{align*}
\noindent Here $ C_{1} $ only depends on the partition of unity as well as the choice of $ \sigma $. Similarly, we  obtain
\begin{equation*}
    \lVert u_{2} \rVert_{L_{\perp}^{2}(U_{2})}  \leqslant C_{2} \lVert u \rVert_{L^{2}(M)}.
\end{equation*}
\noindent Thus there exists  $ C>0 $ such that
\begin{equation*}
    \left| \int_{M} u\phi d\mu \right| \leqslant C \lVert u \rVert_{L_{\perp}^{2}(M)} \lVert \phi \rVert_{E^{'}}.
\end{equation*}
\noindent Lastly, we choose $ \phi = \bar{u} $, then we have
\begin{equation*}
\left\lVert u \right\rVert_{L_{\perp}^{2}(M)}^{2} \leqslant C \left\lVert u \right\rVert_{L_{\perp}^{2}(M)} \left\lVert u \right\rVert_{E}.
\end{equation*}
\noindent This finishes the proof of Theorem \ref{Thm 3.5}. $ \Box $

\begin{remark} Note that we do not need to specify a boundary condition  in the proof of Theorem \ref{Thm 3.5}. Note also that the Poincar\'e constant in Theorem \ref{Thm 3.5} may not be the first positive eigenvalue of $ - \Delta_{g} $ on $ M $.
\end{remark}
\noindent The integration by parts in (\ref{motivation}) is still valid for functions with Neumann boundary condition.  This motivates the definition of a weak solution in this setting.

\begin{definition} $ u \in H_{\perp}^{1}(M) $ is a weak solution of the Neumann boundary problem $ -\Delta_{g}u = f, g(\nabla u, N) = 0 \; on \; \partial M $ if
\begin{equation*}
B_{\mathcal{N}}[v, u] : = \langle v, u \rangle_{E} = \langle v, f \rangle_{L^{2}},
\end{equation*}
\noindent for all $ v \in H_{\perp}^{1}(M) $.
\end{definition}

\noindent We will need the complex-valued version of the Lax-Milgram Theorem.
\begin{theorem} {\it (Lax-Milgram)} Let $ \mathcal{H} $ be a complex Hilbert space with inner product $ \langle \cdot, \cdot \rangle $ and norm $ \left\lVert \cdot \right\rVert $. Let $ B : \mathcal{H} \times \mathcal{H} \rightarrow \mathbb{C} $ be a sesquilinear map satisfying 
\begin{align*}
 \left\lvert B[u, v] \right\rvert &\leqslant \alpha \left\lVert u \right\rVert \left\lVert v \right\rVert
 \ \ \  (boundedness),\\
 \left\lvert B[u, u] \right\rvert &\geqslant \beta \left\lVert u \right\rVert^{2} \ \ \ (Coercivity),
\end{align*}
\noindent for  constants $ \alpha, \beta > 0 $ and for all $u,v\in\mathcal H.$ Then for any bounded linear functional $ L : \mathcal{H} \rightarrow \mathbb{C} $, there exists a unique element $ u \in \mathcal{H} $ such that
\begin{equation}\label{eighteen}
B[v, u] = L(v), \forall v \in H.
\end{equation}
\end{theorem}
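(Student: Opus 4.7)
The plan is to reduce the solvability of $B[v,u]=L(v)$ to the invertibility of a bounded linear operator on $\mathcal{H}$ constructed from $B$, by applying the Riesz representation theorem twice. First, since $L$ is a bounded linear functional on the complex Hilbert space $\mathcal{H}$, Riesz yields a unique $w\in\mathcal{H}$ with $L(v)=\langle v,w\rangle$ for every $v$. Next, for each fixed $u\in\mathcal{H}$, the boundedness of $B$ together with linearity of $B$ in its first slot shows that $v\mapsto B[v,u]$ is a bounded linear functional, so Riesz produces a unique $Au\in\mathcal{H}$ with $B[v,u]=\langle v,Au\rangle$. This defines a map $A:\mathcal{H}\to\mathcal{H}$; the sesquilinearity of $B$ and uniqueness in the Riesz representation force $A$ to be linear, and the norm bound $\|Au\|\leq\alpha\|u\|$ follows by testing the defining identity against $v=Au$ and applying $|B[Au,u]|\leq\alpha\|Au\|\|u\|$.

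The second step is to use the coercivity hypothesis to promote $A$ to a bijection with bounded inverse. Combining coercivity with Cauchy--Schwarz gives
\[\beta\|u\|^{2}\leq|B[u,u]|=|\langle u,Au\rangle|\leq\|u\|\cdot\|Au\|,\]
so $\|Au\|\geq\beta\|u\|$ for every $u$. This estimate simultaneously yields injectivity and closedness of the range of $A$: if $Au_{n}\to y$ in $\mathcal{H}$, then $\{u_{n}\}$ is Cauchy by the inverse bound, hence converges to some $u$ with $Au=y$ by continuity. For surjectivity, suppose $A(\mathcal{H})\neq\mathcal{H}$; by closedness one can pick a nonzero $z\in A(\mathcal{H})^{\perp}$. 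Then $\langle z,Au\rangle=0$ for every $u\in\mathcal{H}$, i.e., $B[z,u]=0$ for every $u$. Specializing $u=z$ and invoking coercivity produces $\beta\|z\|^{2}\leq|B[z,z]|=0$, forcing $z=0$, a contradiction. Hence $A$ is a bounded bijection, and the inverse bound $\|A^{-1}y\|\leq\beta^{-1}\|y\|$ is automatic.

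Finally, the identity $B[v,u]=L(v)$ for all $v$ translates, via the two Riesz representations, to $\langle v,Au\rangle=\langle v,w\rangle$ for all $v$, which is equivalent to $Au=w$. The unique solution is $u=A^{-1}w$, giving both existence and uniqueness. The step I expect to require the most care is the bookkeeping around the sesquilinearity convention: one must verify that $v\mapsto B[v,u]$ is genuinely linear rather than antilinear in $v$, so that Riesz produces an operator $A$ that is linear in $u$ (and not antilinear), and one must confirm that the coercivity inequality is compatible with the slot in which Cauchy--Schwarz is deployed. Once those conventions are pinned down, the remainder of the argument is formal.
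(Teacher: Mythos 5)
Your proposal is correct and complete: the two applications of the Riesz representation theorem, the lower bound $\beta\|u\|\leq\|Au\|$ from coercivity, the closed-range argument, and the orthogonal-complement surjectivity step are all sound, and your attention to which slot of $B$ is linear is exactly the right point to watch (with the paper's conventions, $B_{\mathcal N}[v,u]=\langle v,u\rangle_E$ is linear in $v$ and conjugate-linear in $u$, so $A$ comes out linear as you claim). The paper itself gives no proof, deferring to Lax's text, and your argument is essentially the standard one found there, so there is nothing to reconcile.
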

\begin{proof}
See \cite[Sec.~6, Thm.~6]{Lax}.
\end{proof}
\noindent We now combine this theorem with the Poincar\'e inequality to prove the existence and uniqueness of weak solutions to the linear Poisson equation.
\begin{theorem}\label{Thm 3.7} For $ (M, \partial M, g) $ and $ f \in L^{2}(M) $, there exists a unique weak solution $ u \in H_{\perp}^{1}(M) $ of the Neumann boundary problem
\begin{equation}\label{twentythree}
-\Delta_{g}u = f, g(\nabla u, N) = 0 \; on \; \partial M.
\end{equation}
\end{theorem}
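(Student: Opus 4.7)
The plan is to apply the Lax-Milgram theorem on the Hilbert space $H_{\perp}^{1}(M)$ with the sesquilinear form $B_{\mathcal{N}}[v,u] = \langle v, u \rangle_{E}$ and the linear functional $L(v) = \langle v, f \rangle_{L^{2}}$. Existence and uniqueness of $u$ will follow at once if we can verify boundedness and coercivity of $B_{\mathcal{N}}$ and boundedness of $L$.

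First I would check that $L$ is a bounded conjugate-linear functional on $H_{\perp}^{1}(M)$. Since $H_{\perp}^{1}(M) \hookrightarrow L_{\perp}^{2}(M) \subset L^{2}(M)$, Cauchy-Schwarz gives $|L(v)| \le \|v\|_{L^{2}(M)}\|f\|_{L^{2}(M)} \le \|v\|_{H_{\perp}^{1}(M)}\|f\|_{L^{2}(M)}$, which is the desired bound. (Note that only the mean-zero component of $f$ influences $L$ because test functions lie in $H_{\perp}^{1}(M)$, so no compatibility hypothesis on $f$ is needed.)

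Next I would verify the two hypotheses of Lax-Milgram for $B_{\mathcal{N}}$. Boundedness is immediate from Cauchy-Schwarz applied to the energy inner product: $|B_{\mathcal{N}}[v,u]| = |\langle v, u\rangle_{E}| \le \|v\|_{E}\|u\|_{E} \le \|v\|_{H_{\perp}^{1}(M)}\|u\|_{H_{\perp}^{1}(M)}$. Coercivity is the step that actually uses the Neumann Poincaré inequality proved in Theorem \ref{Thm 3.5}: for every $u \in H_{\perp}^{1}(M)$ we have $\|u\|_{L_{\perp}^{2}(M)}^{2} \le C^{2}\|u\|_{E}^{2}$, so
\begin{equation*}
\|u\|_{H_{\perp}^{1}(M)}^{2} = \|u\|_{L_{\perp}^{2}(M)}^{2} + \|u\|_{E}^{2} \le (C^{2}+1)\|u\|_{E}^{2},
\end{equation*}
which gives $|B_{\mathcal{N}}[u,u]| = \|u\|_{E}^{2} \ge (C^{2}+1)^{-1}\|u\|_{H_{\perp}^{1}(M)}^{2}$, the required coercivity with $\beta = (C^{2}+1)^{-1}$.

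With these three ingredients in hand the Lax-Milgram theorem produces a unique $u \in H_{\perp}^{1}(M)$ such that $B_{\mathcal{N}}[v,u] = L(v)$ for all $v \in H_{\perp}^{1}(M)$, which is exactly the weak formulation of (\ref{twentythree}). The genuinely hard step here is not in this proof at all but is Theorem \ref{Thm 3.5}; once that Poincaré inequality is established, the rest is a textbook Lax-Milgram application. A minor subtlety worth flagging in the write-up is that the weak formulation does not visibly encode the Neumann boundary condition: it appears only implicitly through the choice of test space $H_{\perp}^{1}(M)$ (no boundary terms arise in the $\langle v,u\rangle_{E}$ pairing) and is recovered as a natural boundary condition when one integrates by parts against sufficiently regular $u$.
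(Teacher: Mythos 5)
Your proposal is correct and follows essentially the same route as the paper: a direct application of the complex Lax--Milgram theorem on $H_{\perp}^{1}(M)$ with $B_{\mathcal{N}}[v,u]=\langle v,u\rangle_{E}$, boundedness from Cauchy--Schwarz, and coercivity deduced from the Neumann Poincar\'e inequality of Theorem \ref{Thm 3.5}. Your coercivity computation, giving $\lVert u\rVert_{E}^{2}\geqslant (C^{2}+1)^{-1}\lVert u\rVert_{H_{\perp}^{1}(M)}^{2}$, is in fact a slightly cleaner version of the paper's splitting argument, and your remarks on the mean-zero component of $f$ and the natural boundary condition are accurate.
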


\begin{proof} It is immediate that $L:H_\perp^1(M)\to\C, L(v) = \langle v, f\rangle_{L^2(M)}$ is a bounded linear functional. For the sesquilinear map $ B_{\mathcal{N}} : H_{\perp}^{1}(M) \times H_{\perp}^{1}(M) \rightarrow \mathbb{C} $, we have
\begin{equation*}
\left\lvert B_{\mathcal{N}}[v, u] \right\rvert = \left\lvert \langle v, u \rangle_{E} \right\rvert \leqslant 
 \left\lVert u \right\rVert_E \left\lVert v \right\rVert_E  
\leqslant
\left\lVert u \right\rVert_{H_\perp^1(M)} \left\lVert v \right\rVert_{H_{\perp}^{1}(M)}.
\end{equation*}
\noindent Hence the boundedness condition for $ B_{\mathcal{N}}[v, u] $ is satisfied. 
For the constant $C$ in the Poincar\'e inequality (\ref{nineteen}), the coercivity condition follows from
\begin{align*}
\left\lvert B_{\mathcal{N}}[u, u] \right\rvert & = \left\lVert u \right\rVert_E^{2} = \frac{C^{2}}{1 + C^{2}} \left\lVert u \right\rVert_{E}^{2} + \frac{1}{1 + C^{2}} \left\lVert u \right\rVert_{E}^{2} \geqslant  \frac{C^{2}}{1 + C^{2}} \left\lVert u \right\rVert_{E}^{2} + \frac{1}{1 + C^{2}} C^{2} \left\lVert u \right\rVert_{L_{\perp}^{2}(M)}^{2} \\
& = \frac{C^{2}}{1 + C^{2}}(\left\lVert u \right\rVert_{E}^{2} + \left\lVert u \right\rVert_{L_{\perp}^{2}(M)}^{2}) \geqslant \frac{C^{2}}{2(1 + C^{2})} \left\lVert u \right\rVert_{H_{\perp}^{1}(M)}^{2}.
\end{align*}
\noindent By Lax-Milgram, we conclude that there exists a unique $ u \in H^{1}(M) $ such that $ B_{\mathcal{N}}[v, u] = \langle v, f \rangle_{L^{2}}, \forall v \in H_{\perp}^{1}(M) $.
\end{proof}
\noindent We can now solve the nonlinear Poisson equation (\ref{seventeen}) by combining Theorem \ref{Thm 3.7} with the iterative method.
\begin{theorem} Fix $ (M, \partial M, g) $ and $f\in L^2(M).$ Let $ C $ be the Poincar\'e constant on $ M $  in Theorem \ref{Thm 3.5}. Assume that $ F : \mathbb{C} \rightarrow \mathbb{C} $ is a globally Lipschitz function with Lipschitz constant $ C_{1} $ such that $ C_{1}C^{2} < 1 $. Then (\ref{seventeen}) has a weak solution $ u \in H_{\perp}^{1}(M) $.
\end{theorem}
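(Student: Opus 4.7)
The plan is to mimic the iterative argument of Theorem \ref{Thm 3.4}, but now in the mean-zero space $H_{\perp}^{1}(M)$: Theorem \ref{Thm 3.7} furnishes the unique solution of each linearized Neumann problem, and Theorem \ref{Thm 3.5} plays the role of the Dirichlet Poincar\'e inequality in the contraction estimate. As in Theorem \ref{Thm 2.1}, one may assume $F(0) = 0$ by replacing $F$ with $F - F(0)$, which does not change the Lipschitz constant and only shifts $f$.

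First, construct a sequence $\{u_{k}\}_{k=0}^{\infty} \subset H_{\perp}^{1}(M)$ iteratively. Let $u_{0} \in H_{\perp}^{1}(M)$ be the unique weak solution given by Theorem \ref{Thm 3.7} of $B_{\mathcal{N}}[v, u_{0}] = \langle v, f \rangle_{L^{2}}$ for all $v \in H_{\perp}^{1}(M)$. Given $u_{k-1} \in H_{\perp}^{1}(M)$, the Lipschitz bound $\|F(u_{k-1})\|_{L^{2}(M)} \leqslant C_{1}\|u_{k-1}\|_{L^{2}(M)}$ shows that $f - F(u_{k-1}) \in L^{2}(M)$, so Theorem \ref{Thm 3.7} produces a unique $u_{k} \in H_{\perp}^{1}(M)$ satisfying
\begin{equation*}
B_{\mathcal{N}}[v, u_{k}] = \langle v, f - F(u_{k-1}) \rangle_{L^{2}}, \quad \forall v \in H_{\perp}^{1}(M).
\end{equation*}
Note that no compatibility condition $\int_{M}(f - F(u_{k-1}))\, d\mu = 0$ is required, since the weak formulation tests only against mean-zero functions. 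Subtracting the equations at consecutive steps and choosing $v = u_{k+1} - u_{k} \in H_{\perp}^{1}(M)$, Cauchy-Schwarz, the Lipschitz property of $F$, and Theorem \ref{Thm 3.5} give
\begin{align*}
\|u_{k+1} - u_{k}\|_{E}^{2} &= \langle u_{k+1} - u_{k},\, F(u_{k-1}) - F(u_{k}) \rangle_{L^{2}} \\
&\leqslant C_{1}\|u_{k+1} - u_{k}\|_{L_{\perp}^{2}(M)} \cdot \|u_{k} - u_{k-1}\|_{L_{\perp}^{2}(M)} \\
&\leqslant C_{1} C^{2}\|u_{k+1} - u_{k}\|_{E} \cdot \|u_{k} - u_{k-1}\|_{E},
\end{align*}
so that $\|u_{k+1} - u_{k}\|_{E} \leqslant C_{1} C^{2}\|u_{k} - u_{k-1}\|_{E}$. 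Since $C_{1} C^{2} < 1$, the sequence $\{u_{k}\}$ is Cauchy in the energy norm, hence (by Theorem \ref{Thm 3.5}) Cauchy in $H_{\perp}^{1}(M)$, and converges to some $u \in H_{\perp}^{1}(M)$.

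To finish, pass to the limit in the weak formulation: $B_{\mathcal{N}}[v, u_{k}] \to B_{\mathcal{N}}[v, u]$ by boundedness of the energy pairing, and $\langle v, F(u_{k-1})\rangle_{L^{2}} \to \langle v, F(u)\rangle_{L^{2}}$ because $\|F(u) - F(u_{k-1})\|_{L^{2}} \leqslant C_{1}\|u - u_{k-1}\|_{L^{2}} \to 0$. The resulting identity $B_{\mathcal{N}}[v, u] + \langle v, F(u)\rangle_{L^{2}} = \langle v, f\rangle_{L^{2}}$ for all $v \in H_{\perp}^{1}(M)$ is exactly the weak formulation of (\ref{seventeen}); the Neumann condition is encoded implicitly in the space $H_{\perp}^{1}(M)$ and the form $B_{\mathcal{N}}$ rather than imposed separately. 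The main conceptual obstacle, compared with the Dirichlet case of Theorem \ref{Thm 3.4}, is the careful setup in the mean-zero framework: one must verify that the iteration remains well-posed without any zero-mean requirement on $f - F(u_{k-1})$, and that the Poincar\'e constant from Theorem \ref{Thm 3.5} (which controls the mean-zero difference $u_{k+1} - u_{k}$) is the correct one driving the contraction.
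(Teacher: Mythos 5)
Your proposal is correct and follows exactly the route the paper intends: the paper's own proof of this theorem is just the one-line remark that it is ``essentially the same as in Theorems \ref{Thm 2.1} and \ref{Thm 3.4},'' i.e.\ the iteration via the linear Neumann solvability of Theorem \ref{Thm 3.7} and the contraction driven by the Poincar\'e constant of Theorem \ref{Thm 3.5}, which is precisely what you carry out. Your explicit observations --- that no mean-zero compatibility condition on $f - F(u_{k-1})$ is needed because one only tests against $v \in H_{\perp}^{1}(M)$, and that the Neumann condition is encoded in the space and the form $B_{\mathcal{N}}$ --- are faithful elaborations of the paper's framework rather than a departure from it.
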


\begin{proof} This proof is essentially the same as in Theorems \ref{Thm 2.1} and \ref{Thm 3.4}.

\end{proof}


\subsection{Nonlinear Laplace Equations on Closed Manifolds}
It is technically difficult to apply the iterative method to closed Riemannian manifolds $(M,g)$. We show how to surmount these difficulties on the $n$-sphere $S^{n} $, and discuss the obstacles to this approach on general closed manifolds. \\
\\
Consider the strong version of the iterative steps (\ref{twelve}): 
\begin{align*}
 -\Delta_{g} u_{0} &= f, \\
 -\Delta_{g} u_{k} + F(u_{k-1}) &= f,
\end{align*}
\noindent for $k\in \N.$ Since $\int_M -\Delta_g f = \int_M -\Delta_g f \cdot 1 = \int_M f\cdot -\Delta_g 1 = 0$ on a closed manifold,
we must have
\begin{equation*}
\int_{M} f = 0, \int_{M} (f - F(u_{k-1}) )= 0, 
\end{equation*}
\noindent which implies that $ \int_{M} F(u_{k-1}) \equiv 0 $. This is difficult to guarantee.  It follows that simply mimicking the iterative method fails on closed manifolds. However, we can reintroduce the Dirichlet problem by considering the PDE chart by chart.\\
\\
\noindent 
This works on $S^n$, because the chart structure is particularly simple. We can cover $S^n$ by smooth charts $ U_{1} $ and $ U_{2} $, where $ U_{1} $  contains the north pole and is a bit larger than the upper hemisphere and $ U_{2} $  contains the south pole and is a bit larger than the lower hemisphere.  Hence $ U_{1} \cap U_{2} $ contains the equator and is diffeomorphic to $ S^{n-1} \times \mathbb{R} $. We also have $ \partial U_{1} \cap \partial U_{2} = \emptyset $. With this setup, we can apply artificial boundary conditions as in \cite{UV} for the charts $ U_{1} $ and $ U_{2} $ to obtain local solutions and then glue them together.  
\begin{theorem} Fix a Riemannian metric $g$ on $S^n$, and fix $f\in L^2(S^n).$ Set $ C  = \max\{D_1, D_2\}$, where $D_i$ is  the Poincar\'e constant for $U_i$. Assume that $ F : \mathbb{C} \rightarrow \mathbb{C} $ is a globally Lipschitz function with Lipschitz constant $ C_{1} $ such that $ C_{1}C^{2} < 1 $. Then
\begin{equation*}
-\Delta_{g} u + F(u) = f
\end{equation*}
\noindent has a weak solution $ u \in H^{1}(S^{n}) $. 
\end{theorem}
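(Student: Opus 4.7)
The plan is to bypass the zero-average obstruction on closed manifolds by splitting $S^n$ chartwise and applying the Dirichlet theory of Section 3.1 on each $U_i$. Because $\partial U_1 \cap \partial U_2 = \emptyset$, each $\partial U_i$ is a $\mathcal{C}^1$ hypersurface contained in the open interior of the other chart $U_j$, so the trace on $\partial U_i$ of any $v \in H^1(S^n)$ is well-defined through $v|_{U_j}$. Fix once and for all a smooth partition of unity $\{\chi_1, \chi_2\}$ subordinate to $\{U_1, U_2\}$.

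The iteration I would set up is as follows. Initialize $u^{(0)} \equiv 0$, and given $u^{(k-1)} \in H^1(S^n)$, for each $i \in \{1,2\}$ apply Corollary \ref{Cor 2.1}'s manifold analogue on $(U_i, g)$ (whose Dirichlet Poincar\'e constant $D_i$ satisfies $C_1 D_i^2 \leq C_1 C^2 < 1$) with boundary data $u^{(k-1)}\big|_{\partial U_i}$ to obtain the unique weak solution $u_i^{(k)} \in H^1(U_i)$ of
\begin{equation*}
-\Delta_g u_i^{(k)} + F(u_i^{(k)}) = f \ \text{in}\ U_i, \quad u_i^{(k)}\big|_{\partial U_i} = u^{(k-1)}\big|_{\partial U_i}.
\end{equation*}
Glue these into $u^{(k)} := \chi_1 u_1^{(k)} + \chi_2 u_2^{(k)} \in H^1(S^n)$. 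For convergence, subtract the weak PDEs for two consecutive iterations on $U_i$, test against $u_i^{(k+1)} - u_i^{(k)}$, and argue exactly as in Theorem \ref{Thm 3.4}: the Lipschitz bound on $F$ together with the Dirichlet Poincar\'e inequality on $U_i$ produces a contractive estimate, with an extra boundary-data mismatch term controlled by $\|u^{(k)} - u^{(k-1)}\|_{H^1(U_j)}$ via the trace theorem. Summing the two chartwise estimates and using $C_1 C^2 < 1$ shows that $\{u^{(k)}\}$ is Cauchy in $H^1(S^n)$, hence converges to some $u \in H^1(S^n)$.

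The main obstacle is the gluing step. Since $F$ is nonlinear, one cannot simply add the chartwise weak formulations weighted by $\chi_i$, and products like $-\Delta_g(\chi_i u_i)$ generate cutoff commutators involving $\nabla \chi_i$ which are not present in the local PDEs. The way out is to show that, in the limit, the two local solutions $u_1^{(\infty)}$ and $u_2^{(\infty)}$ agree on the overlap $U_1 \cap U_2$: both weakly satisfy $-\Delta_g w + F(w) = f$ on $U_1 \cap U_2$ with the same boundary trace (inherited from the common global limit $u$ on $\partial(U_1\cap U_2) \subset \partial U_1 \cup \partial U_2$). The contractive uniqueness argument of Theorem \ref{Thm 2.1}, still valid because $C_1 C^2 < 1$, then forces $u_1^{(\infty)} = u_2^{(\infty)}$ on $U_1 \cap U_2$. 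Thus $u$ is unambiguously defined, and testing the two chartwise weak forms against any $v \in \mathcal{C}_c^\infty(S^n)$ decomposed as $v = \chi_1 v + \chi_2 v$ yields a global weak solution of $-\Delta_g u + F(u) = f$ on $S^n$.
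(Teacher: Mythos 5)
Your route is genuinely different from the paper's (the paper never iterates between charts: it solves once on $U_1$ with an arbitrary smooth boundary datum, then once on $U_2$ with boundary value $u_1|_{\partial U_2}$, and glues those two functions directly), but as written your argument has a real gap at the convergence step. First, a structural point: after subtracting the weak formulations for two consecutive iterates on $U_i$ you cannot test against $u_i^{(k+1)}-u_i^{(k)}$, because this difference carries the nonzero boundary trace $\bigl(u^{(k)}-u^{(k-1)}\bigr)\big|_{\partial U_i}$ and hence is not an admissible test function in the Dirichlet weak formulation on $U_i$; you must first subtract an $H^1(U_i)$ extension of the boundary-data difference and test against the remaining $H^1_0(U_i)$ part. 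Once you do this correctly, the estimate you obtain has the form
\begin{equation*}
\bigl\lVert u_i^{(k+1)}-u_i^{(k)} \bigr\rVert_{H^1(U_i)} \;\leqslant\; K\Bigl( \bigl\lVert u_1^{(k)}-u_1^{(k-1)} \bigr\rVert_{H^1(U_1)} + \bigl\lVert u_2^{(k)}-u_2^{(k-1)} \bigr\rVert_{H^1(U_2)} \Bigr),
\end{equation*}
where $K$ is built from the norm of the trace map $H^1(U_j)\to H^{1/2}(\partial U_i)$, the norm of an extension operator $H^{1/2}(\partial U_i)\to H^1(U_i)$, the $\mathcal{C}^1$ norms of the cutoffs $\chi_i$, and the factor $(1-C_1D_i^2)^{-1}$. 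The hypothesis $C_1C^2<1$ controls only the interior nonlinear term; it does nothing to make $K$ small, and there is no reason for $K<1$. So the claim that "summing the two chartwise estimates and using $C_1C^2<1$ shows $\{u^{(k)}\}$ is Cauchy in $H^1(S^n)$" is unjustified. What you have set up is an overlapping alternating (Schwarz-type) iteration, and its convergence requires a separate argument about the boundary-data transfer between charts (e.g., a contraction property of the trace-to-solution map coming from the geometry of the overlap, or maximum-principle tools), none of which follows from the Lipschitz smallness alone.

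Everything downstream — the identification of the two limits on $U_1\cap U_2$ by the uniqueness argument (which is otherwise fine, since the Dirichlet Poincar\'e constant of $U_1\cap U_2$ is dominated by $D_i$ and the limiting traces do match because $\chi_1$ vanishes on $\partial U_1$ and $\chi_2$ on $\partial U_2$), and the final partition-of-unity test-function decomposition — is conditional on that convergence, so the proof does not close as written. For contrast, the paper's proof needs no convergence of a chart-alternating scheme at all: it makes a single pass, choosing boundary data on $\partial U_1$ freely, transporting $u_1|_{\partial U_2}$ as the Dirichlet datum for $U_2$, and defining the global function piecewise; if you want to salvage your approach you must either supply the missing contraction estimate for the transfer operator or abandon the iteration in favor of such a one-pass construction.
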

\begin{proof} On $ U_{1} $, we choose $h\in C^\infty(\partial U_1)$ and consider the PDE
\begin{equation*}
-\Delta_{g} u_{1} + F(u_{1}) = f \; \text{on} \; U_{1}, u_{1} |_{\partial U_{1}} = h.
\end{equation*}
\noindent $ U_{1} $ is an open subset of $S^{n} $, so it is itself a Riemmanian manifold with Riemannian metric given by restricting $ g $ to $ U_{1} $. Since $C_1D_1^2 <1,$ by Corollary 3.1 we have a solution $ u_{1} $ on $ U_{1} $. Similarly, we consider
\begin{equation*}
-\Delta_{g} u_{2} + F(u_{2}) = f \; \text{on} \; U_{2}, u_{2} |_{\partial U_{2}} = u_{1} |_{\partial U_{2}}.
\end{equation*}
\noindent By Corollary 3.1 again, we conclude that there exists $ u_{2} $ that solves this PDE  on $ U_{2} $. We now set
\begin{equation*}
v = \begin{cases} & u_{1} \; \text{on} \; U_{1} \backslash \overline{U_{1} \cap U_{2}}, \\ & u_{2} \; \text{on} \; U_{2}, \\ & u_{1} |_{\partial U_{2}} \; \text{on} \; \partial U_{2}. \end{cases}
\end{equation*}
\noindent By the trace theorem, we conclude that $ v \in L^{2}(S^{n}) $ solves the PDE $ -\Delta_{g} u + F(u) = f $ on $ \mathbb{S}^{n} $ weakly.
\end{proof}
\noindent For a general closed manifold, it seems technically difficult to find a cover by charts 
$\{U_i\}_{i=1}^k$ such that sets like $U_1\setminus \overline{\left(\cup_{i=2}^k U_i\right)}$ have $\calC^1$ boundary.  This prevents us from using the Poincar\'e inequality Theorem \ref{Thm 3.2} and Corollary 3.1.


\section{Parametrices for Nonlinear Laplace Equations}
The powerful parametrix method to solve linear elliptic differential operators, first discussed by Hadamard (see e.g. \cite[Vol. 3, Ch. 17]{Hor2}) has been applied to many other problems in analysis, topology and geometry through the development of microlocal analysis in e.g. \cite{Hor2}. In this section, we discuss extending the parametrix method to nonlinear Laplace equations on both Euclidean domains $\Omega$ and compact manifolds $M$ with boundary. We obtain an integrated version of $-\Delta_gu + F(u) = f, u|_{\partial M} = 0$, which leads to refined estimates for the solutions to the Dirichlet problem given in Theorems \ref{Thm 2.1} and \ref{Thm 3.4}.  \\
\\ 
In this section, we assume familiarity with Sobolev spaces $H^s(\Omega)$ and $H^s(M)$, for $s\in\R$. In particular, for $s>0$, the Sobolev pairing of $ H^{-s}(\Omega), H^{-s}(M) $ with $H^s(\Omega), H^s(M)$, resp., identifies $ H^{-s}(\Omega), H^{-s}(M) $ with the dual space of 
$ H^{s}(\Omega), H^{s}(M) $, resp.
 We also assume familiarity with symbols and principal symbols of differential operators. \\
 \\
 \noindent For a domain $\Omega\in \R^n$, let $ \mathcal{C}^{-\infty}(\Omega) $ be the space of distributions on $\Omega$. 
 
\begin{definition} An operator $ R : \mathcal{C}^{-\infty}(\Omega) \rightarrow \mathcal{C}^{-\infty}(\Omega) $ is properly supported if for all $ u \in \mathcal{C}^{-\infty}(\Omega) $ with support $ \text{supp}(u)$ contained in a compact subset $K\subset \Omega$, there exists a compact subset $K' = K'(K, u)\subset \Omega$ such that $ \text{supp}(Ru)\subset K'$.
\end{definition}
\noindent Any properly supported smooth operator acting on $ H^{s}(\Omega) $ is a compact operator. \\
\\
By work of H\"ormander \cite[Vol.~3, Ch.~20]{Hor2} and Melrose \cite[Ch.~4]{RBM1}, we have the following theorem for elliptic differential operators with constant coefficients. 
\begin{theorem} Let $ \Omega $ be an open subset of $ \mathbb{R}^{n} $, and let $ D $ be an elliptic differential operator with constant coefficients and order $m$. There exists a linear operator $ Q_{\Omega} : \mathcal{C}^{-\infty}(\Omega) \rightarrow \mathcal{C}^{-\infty}(\Omega), $ which is bounded  from $ H_{loc}^{s}(\Omega) $ to $ H_{loc}^{s + m}(\Omega) $ for each $ s \in \mathbb{R} $, which gives a 2-sided parametrix for $ D $ in $ \Omega $:
\begin{equation}
D Q_{\Omega} = I - R, Q_{\Omega} D = I - R',
\end{equation}
where $ R $ and $ R' $ are properly supported operators which are bounded  between any two Sobolev spaces
$H^s_{loc}(\Omega), H^{s'}_{loc}(\Omega)$. 
\end{theorem}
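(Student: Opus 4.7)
The plan is to build a global parametrix on $\mathbb{R}^n$ by Fourier inversion of a cutoff of $1/p(\xi)$, where $p$ is the symbol of $D$, and then to localize to $\Omega$ by truncating the Schwartz kernel with a properly supported cutoff near the diagonal. Write $p(\xi) = \sum_{|\alpha|\leq m}a_\alpha \xi^\alpha$ with principal part $p_m(\xi)$. Ellipticity of $D$ gives $|p_m(\xi)|\geq c_0|\xi|^m$ on the unit sphere, hence everywhere by homogeneity; combined with $|p - p_m| \leq C(1+|\xi|)^{m-1}$, this yields $|p(\xi)|\geq (c_0/2)|\xi|^m$ for $|\xi|\geq R$ sufficiently large. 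Pick $\chi\in \mathcal{C}^{\infty}(\mathbb{R}^n)$ supported in $\{|\xi|\geq R\}$ and equal to $1$ on $\{|\xi|\geq 2R\}$, and set $q(\xi) = \chi(\xi)/p(\xi)$. Routine Leibniz-rule estimates show that $q$ lies in the classical symbol class $S^{-m}(\mathbb{R}^n)$.

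Next I would define the global Fourier multiplier $Q u = \mathcal{F}^{-1}(q\hat u)$, which is bounded $H^s(\mathbb{R}^n)\to H^{s+m}(\mathbb{R}^n)$ for every $s\in\mathbb{R}$ by Plancherel. Because $D$ has constant coefficients, $D$ and $Q$ are commuting Fourier multipliers and
\begin{equation*}
DQu \;=\; \mathcal{F}^{-1}\bigl(p(\xi) q(\xi)\hat u\bigr) \;=\; \mathcal{F}^{-1}(\chi \hat u) \;=\; QDu,
\end{equation*}
so $DQ - I = QD - I$ is convolution against $\mathcal{F}^{-1}(\chi-1)$. Since $\chi-1$ is compactly supported and smooth, its inverse Fourier transform is Schwartz, and hence $DQ-I$ is a global smoothing operator on $\mathbb{R}^n$.

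To pass to $\Omega$, I would use that the Schwartz kernel $K_Q(x,y)$ of $Q$ is smooth off the diagonal, a standard fact for symbols of class $S^{-m}$. Fix $\phi \in \mathcal{C}^{\infty}(\Omega\times \Omega)$ with $\phi \equiv 1$ in a neighborhood of the diagonal and with both coordinate projections restricted to $\mathrm{supp}(\phi)$ proper, and define $Q_\Omega$ to be the operator on $\Omega$ with Schwartz kernel $\phi(x,y) K_Q(x,y)$. By construction $Q_\Omega$ is properly supported. For $u\in \mathcal{C}_c^{\infty}(\Omega)$, extending $u$ by zero, the difference between $(Qu)\big|_\Omega$ and $Q_\Omega u$ has kernel $(1-\phi)K_Q\big|_{\Omega\times\Omega}$, which is smooth precisely because $1-\phi$ kills the diagonal singularity of $K_Q$. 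Applying $D$ and using the global identity from the previous step gives $DQ_\Omega = I - R$ with $R$ a properly supported smoothing operator on $\Omega$, and analogously $Q_\Omega D = I - R'$. The mapping property $Q_\Omega: H^s_{\mathrm{loc}}(\Omega) \to H^{s+m}_{\mathrm{loc}}(\Omega)$ then follows by conjugating $Q$ against local cutoffs and invoking the global $H^s\to H^{s+m}$ bound, while $R, R'$ are bounded between any pair $H^s_{\mathrm{loc}}(\Omega), H^{s'}_{\mathrm{loc}}(\Omega)$ because they are smoothing and properly supported.

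The main obstacle is the kernel bookkeeping in the localization step: one must verify that truncating $K_Q$ by $\phi$ converts the global identity $DQ-I=\text{smoothing}$ into the local identity $DQ_\Omega-I=R$ with $R$ still smoothing and now properly supported. The key point is that $D$ applied to $(1-\phi)K_Q$ produces derivatives of a function that is smooth precisely where $(1-\phi)\neq 0$, so no new singularities arise. Once this is in place, proper support ensures that all the operators extend from $\mathcal{C}_c^{\infty}(\Omega)$ to $\mathcal{C}^{-\infty}(\Omega)$ and that the local Sobolev bounds are automatic from the symbol calculus.
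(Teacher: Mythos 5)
Your construction is correct, and it is genuinely different in character from what the paper does: the paper offers no argument of its own but simply invokes the general pseudodifferential parametrix machinery of H\"ormander and Melrose, in which one inverts the symbol asymptotically and obtains a properly supported parametrix from the full symbol calculus (an argument that works for variable coefficients). You instead exploit the constant-coefficient hypothesis to get everything by hand: ellipticity gives $\lvert p(\xi)\rvert \geqslant (c_0/2)\lvert\xi\rvert^{m}$ for large $\lvert\xi\rvert$, the multiplier $q=\chi/p\in S^{-m}$ yields a global two-sided inverse up to convolution with the Schwartz function $\mathcal{F}^{-1}(\chi-1)$, and the Sobolev bounds $H^{s}\to H^{s+m}$ are immediate from Plancherel rather than from the calculus. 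The localization step is also sound, and your closing remark identifies the right issue; the one point worth making explicit is why $R$ and $R'$ are properly supported, since neither the restricted global error $\mathcal{F}^{-1}(\chi-1)\ast(\cdot)$ nor the operator with kernel $(1-\phi)K_Q$ is properly supported on its own. The clean justification is either that $R=I-DQ_\Omega$ is a difference of properly supported operators ($D$ is local and $Q_\Omega$ has kernel supported in $\mathrm{supp}\,\phi$), or the direct kernel computation: off $\mathrm{supp}\,\phi$ one has $D_x\bigl[(1-\phi)K_Q\bigr]=D_xK_Q=\delta+\mathcal{F}^{-1}(\chi-1)$ away from the diagonal, which exactly cancels the restricted global error, so the kernel of $R$ vanishes outside $\mathrm{supp}\,\phi$ and equals $-\mathcal{F}^{-1}(\chi-1)(x-y)$ near the diagonal, hence is smooth and properly supported; the same applies to $R'$. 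With that sentence added, your proof is complete and is arguably more elementary and self-contained than the route the paper cites, at the cost of being limited to constant coefficients, which is all the theorem requires.
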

\noindent $Q_\Omega$ is called the parametrix of $D.$
\begin{remark} 
If $\Omega$ is precompact, we can replace $ H_{loc}^{s}(\Omega), H_{loc}^{s + m}(\Omega) $ in the Theorem by $ H^{s}(\Omega),  H^{s + m}(\Omega) $ by the standard technique of extending $u\in H^s(\Omega)$, $u|_{\partial \Omega} = 0$, by zero to an open bounded set containing $\Omega.$
\end{remark}
\noindent  We now use the parametrix $Q_\Omega$ to reformulate the Dirichlet problem (\ref{Thm 2.1 PDE}) in order to obtain new estimates for solutions. Since $Q_\Omega$ and $R'$ are pseudodifferential operators, (\ref{twentyfour}) replaces the differentiations in (\ref{Thm 2.1 PDE}) with integrations. While this does not make solving (\ref{Thm 2.1 PDE}) easier, it does lead to the improved estimates (\ref{new estimate}). 

\begin{proposition} Consider the Dirichlet problem on a bounded domain $\Omega$ under the hypotheses in Theorem \ref{Thm 2.1}. There exists a parametrix $ Q_{\Omega} : L^{2}(\Omega) \rightarrow H_{0}^{2}(\Omega) $ of $ -\Delta $ such that
\begin{equation}\label{twentyfour}
u = Q_{\Omega} f + R'u - Q_{\Omega} F(u).
\end{equation}
\noindent If the Lipschitz constant $ C $ of $F$ satisfies $ C \left\lVert Q_{\Omega} \right\rVert_{L^{2} \rightarrow H^{2}} < 1 $, then  for $s\in \R$, we have
\begin{equation}\label{new estimate}
\lVert u \rVert_{H_{0}^{1}(\Omega)} \leqslant C' \left \lVert f \right\rVert_{L^{2}(\Omega)} + C_{s} \left\lVert u \right\rVert_{H^{s}(\Omega)},
\end{equation}
\noindent for some positive constants $ C' $ and $ C_{s} $.  
\end{proposition}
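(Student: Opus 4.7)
The strategy is to first convert the PDE into an integral equation via the parametrix, and then extract the estimate by taking $H_0^1(\Omega)$-norms of both sides and absorbing the nonlinear contribution using the smallness hypothesis on $C\lVert Q_\Omega\rVert_{L^2\to H^2}$.

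First, I would invoke the parametrix theorem (Theorem 4.1 above) together with Remark 4.1 to produce a parametrix $Q_\Omega$ that is bounded $H^s(\Omega)\to H^{s+2}(\Omega)$ for each $s\in\mathbb{R}$ and satisfies $Q_\Omega(-\Delta)=I-R'$, where $R'$ is a properly supported smoothing remainder bounded between any pair of Sobolev spaces. By Theorem \ref{Thm 2.1}, the unique weak solution $u\in H_0^1(\Omega)\cap H^2(\Omega)$ satisfies $-\Delta u = f - F(u)$ in $L^2(\Omega)$, where $F(u)\in L^2(\Omega)$ follows from (\ref{Thm 2.1 Lip-esti}) after the usual replacement of $F$ by $F-F(0)$. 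Applying $Q_\Omega$ to both sides and using $Q_\Omega(-\Delta u)=u-R'u$ gives the identity $u = Q_\Omega f + R'u - Q_\Omega F(u)$ claimed in (\ref{twentyfour}).

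Next, I would take $H^1(\Omega)$-norms term by term. The mapping property of $Q_\Omega$ together with the embedding $H^2\hookrightarrow H^1$ yields $\lVert Q_\Omega f\rVert_{H^1}\leqslant\lVert Q_\Omega\rVert_{L^2\to H^2}\lVert f\rVert_{L^2}$. The smoothing property of $R'$ gives $\lVert R'u\rVert_{H^1}\leqslant C_s\lVert u\rVert_{H^s}$ for any prescribed $s\in\mathbb{R}$. For the nonlinear piece, the Lipschitz bound $\lVert F(u)\rVert_{L^2}\leqslant C\lVert u\rVert_{L^2}$ combined with the continuous embedding $H_0^1(\Omega)\hookrightarrow L^2(\Omega)$ yields $\lVert Q_\Omega F(u)\rVert_{H^1}\leqslant C\lVert Q_\Omega\rVert_{L^2\to H^2}\lVert u\rVert_{H_0^1}$. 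Since $C\lVert Q_\Omega\rVert_{L^2\to H^2}<1$ by hypothesis, this term absorbs into the left-hand side, producing (\ref{new estimate}) with explicit constants $C'=(1-C\lVert Q_\Omega\rVert_{L^2\to H^2})^{-1}\lVert Q_\Omega\rVert_{L^2\to H^2}$ and $C_s=(1-C\lVert Q_\Omega\rVert_{L^2\to H^2})^{-1}\lVert R'\rVert_{H^s\to H^1}$.

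The main technical subtlety is the interaction of the parametrix with the Dirichlet boundary condition: $Q_\Omega$ and $R'$ are built at the distributional level on $\Omega$ and do not a priori preserve $u|_{\partial\Omega}=0$, so none of the three terms on the right of the integral equation need individually lie in $H_0^1(\Omega)$, only their sum. This is resolved via the extension-by-zero construction indicated in Remark 4.1, which permits us to work with the global $H^s(\Omega)$ scale rather than $H^s_{\mathrm{loc}}(\Omega)$; only the $H^1(\Omega)$ sizes of the pieces enter the final inequality. A minor secondary check is that $R':H^s(\Omega)\to H^1(\Omega)$ is bounded for the prescribed (possibly negative) $s$, which is immediate from the statement that $R'$ is bounded between any pair of Sobolev spaces.
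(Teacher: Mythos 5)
Your proposal is correct, but it reaches the integral identity (\ref{twentyfour}) by a different route than the paper. You apply the left parametrix identity $Q_{\Omega}(-\Delta)=I-R'$ directly to the equation $-\Delta u = f - F(u)$ satisfied by the limiting weak solution $u$ of Theorem \ref{Thm 2.1}, whereas the paper applies $Q_{\Omega}$ to each iterate, obtaining $u_{k} = Q_{\Omega}f + R'u_{k-1} - Q_{\Omega}F(u_{k-1})$, and then passes to the limit in $H_{0}^{1}(\Omega)$ using the continuity of $Q_{\Omega}$, $R'$ and $F$. Your version is more direct and avoids the limit-passage step entirely; the paper's version keeps the identity tied to the iterative construction and only ever applies the parametrix to the iterates $u_{k}\in H^{2}(\Omega)\cap H_{0}^{1}(\Omega)$, whose regularity was established in Theorem \ref{Thm 2.1}. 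One small caveat in your write-up: Theorem \ref{Thm 2.1} only gives the limit $u\in H_{0}^{1}(\Omega)$, so your assertion that $u\in H^{2}(\Omega)$ is not justified by anything quoted in the paper; it is also unnecessary, since $-\Delta u = f-F(u)$ holds in the distributional sense from the weak formulation (test against $\mathcal{C}_{c}^{\infty}(\Omega)$), $f-F(u)\in L^{2}(\Omega)$ by the Lipschitz bound, and $Q_{\Omega}$, $R'$ act on all of $\mathcal{C}^{-\infty}(\Omega)$, so the identity (\ref{twentyfour}) follows without any extra regularity of $u$. The absorption argument you give for (\ref{new estimate}) is the same as the paper's, with the same constants up to bookkeeping; your explicit remark that the three terms on the right of (\ref{twentyfour}) need not individually satisfy the Dirichlet condition, only their sum, is a point the paper passes over silently and is worth keeping.
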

\begin{proof} The Euclidean Laplacian $ - \Delta $ is clearly an elliptic operator a constant coefficient elliptic operator, so by Theorem 4.1, as well as Remark 4.1,  there exists a 2-sided parametrix $ Q_{\Omega} $ of $ - \Delta $ such that $ Q_{\Omega} (-\Delta) = Id - R' $, where $ R' $ is properly supported. Applying $ Q_{\Omega} $ to both sides of (3) and (4), we observe that
\begin{align}
 Q_{\Omega} (-\Delta u_{0}) &= Q_{\Omega} f \Rightarrow u_{0} = Q_{\Omega} f + R' u_{0}; \nonumber \\
 Q_{\Omega} (-\Delta u_{k}) + Q_{\Omega} (F(u_{k-1}) &= Q_{\Omega} f \Rightarrow u_{k} = Q_{\Omega} f + R' u_{k - 1} -  Q_{\Omega} (F(u_{k-1})). \label{Parametrix Iteration}
\end{align}
\noindent By (5) and the hypotheses of Theorem \ref{Thm 2.1}, we know that $ f, F(u_{k-1}) \in L^{2}(\Omega) $. By Theorem 4.1, we conclude that $ Q_{\Omega} f, Q_{\Omega} (F(u_{k-1})) \in H^{2}(\Omega) $ and $ R' u_{k - 1} \in C^{\infty}(\Omega) $. We can therefore take the limit for the sequence $ \lbrace u_{k} \rbrace $ in $ H_{0}^{1}(\Omega) $. By continuity of $Q_\Omega, R', F $, we take limit on both sides of (\ref{Parametrix Iteration}) to get 
\begin{align*}
u &= Q_{\Omega} f + R'u - Q_{\Omega} F(u).
\end{align*}
\noindent Therefore,
\begin{equation*}
\lVert u \rVert_{H_{0}^{1}(\Omega)} \leqslant \lVert Q_{\Omega} f \rVert_{H_{0}^{1}(\Omega)} + \lVert R' u \rVert_{H_{0}^{1}(\Omega)} + \lVert Q_\Omega \rVert_{L^{2}(\Omega) \rightarrow H^{2}(\Omega)} C \lVert u \rVert_{H_{0}^{1}(\Omega)},
\end{equation*}
\noindent where the estimate in the last term follows as in (5).Since $ C \left\lVert Q_{\Omega} \right\rVert_{L^{2}(\Omega) \rightarrow H^{2}(\Omega)} < 1 $, we can shift the last term to the left hand side  and conclude that
\begin{equation*}
\lVert u \rVert_{H_{0}^{1}(\Omega)} \leqslant C' \left \lVert f \right\rVert_{L^{2}(\Omega)} + C_{N} \left\lVert u \right\rVert_{H^{-N}(\Omega)}, \forall N \in \mathbb{Z}
\end{equation*}
for positive constants $ C' $ and $ C_{N} $. $ C_{N} $ depends on the choice of the Sobolev space $ H^{-N}(\Omega) $. 
\end{proof}
\noindent We want to extend the estimate (\ref{new estimate}) to the Dirichlet problem on manifolds with boundary.  To do so, we need to recall results of H\"ormander \cite[Vol. 3, Ch. 20]{Hor2} and his definition of elliptic boundary value problems for operators on sections of vector bundles.\\
\begin{definition}  (i) Let $ M $ be a smooth manifold. Assume we have an altas $ \lbrace (U_{\alpha}, \phi_{\alpha}) \rbrace $ of $ M $ and distributions $ u_{\alpha} \in \mathcal{C}^{-\infty}(\phi_{\alpha}(U_{\alpha})) $ such that
\begin{equation*}
u_{\beta} = (\phi_{\alpha} \circ \phi_{\beta}^{-1})^{*} u_{\alpha} \; \text{on} \; \phi_{\beta}(U_{\alpha} \cap U_{\beta}).
\end{equation*}
\noindent Then $ u = \lbrace u_{\alpha} \rbrace $ is called a distribution on $ M $, denoted by $ u \in \mathcal{C}^{-\infty}(M) $. \\
\\
(ii) Let $ \mathcal{C}^{-\infty}(\Omega, \Omega \times \mathbb{C}^{n})$ be the space of distributions on $C_c^\infty(\Omega,  \C^n)$; we can identify $C_c^\infty(\Omega,  \C^n)$ with the space of compactly supported smooth sections of the trivial bundle $\Omega\times \C^n\to \Omega.$\\
\\
(iii) Let $ E \xrightarrow{\pi} M $ be a complex rank $n$ vector bundle 
with
local trivializations $ \lbrace (\pi^{-1}(U_{\alpha}), \phi_{\alpha}) \rbrace $,
$\phi_\alpha:\pi^{-1}(U_\alpha) \to U_\alpha \times \C^n$. 
Given distributions $ u_{\alpha} \in  C^{-\infty}(U_{\alpha},U_\alpha\times \C^n)$ such that
\begin{equation*}
    u_{\beta} = (\phi_{\alpha} \circ \phi_{\beta}^{-1})^{*} u_{\alpha} \; \text{on} \; \phi_{\beta}(\pi^{-1}(U_{\alpha} \cap U_{\beta})),
\end{equation*}
we call $u = \lbrace u_{\alpha} \rbrace $ a distribution on $ E \xrightarrow{\pi} M $, denoted by $ u \in \mathcal{C}^{-\infty}(M, E) $.
\end{definition}
\noindent Let $(M, \partial M, g)$ be a compact, smooth Riemannian manifold with $\calC^1$ boundary. Let $ D : \mathcal{C}^{\infty}(M, E) \to \mathcal{C}^{\infty}(M, F)$ be an order $m$ elliptic differential operator acting between smooth sections of complex vector bundles $E, F$ of the same rank $N$ over $M$. Let $ B_{j} : \mathcal{C}^{\infty}(M, E) \rightarrow \mathcal{C}^{\infty}(\partial M, G_{j}), j = 1, \dotso, J $ be differential operators of order $ m_{j} $, where $ G_{j} $ are smooth, complex vector bundles over $ \partial M $. The boundary problem for $ u \in \mathcal{C}^{\infty}(M, E) $  is
\begin{equation}\label{twentyfive}
Du = f \; in \; M; B_{j}u = g_{j} \; in \; \partial M, j = 1, \dotso, J,
\end{equation}
\noindent where $ f \in \mathcal{C}^{\infty}(M, F) $ and $ g_{j} \in \mathcal{C}^{\infty}(\partial M, G_{j}) $ are given. In fact, we will only need the trivial case 
$E=F=M\times \C$, in which case $C^\infty(M,E) = C^\infty(M,F) = C^\infty(M,\C).$ \\
\\
Let $D_t = \frac{1}{i}\partial_t$, where $t$ is the globally defined inward pointing normal coordinate on $\partial M.$ For a vector bundle $E$ over $M$, let $E_x $ be the fiber at $x\in M$.
\begin{definition} \cite[Vol. 3, Def. 20.1.1]{Hor2} The boundary problem (\ref{twentyfive}) is called elliptic if \\
(i) $ D $ is elliptic; \\
(ii) The boundary conditions are elliptic in the sense that for every $ x \in \partial M $ and $ \zeta \in T_{x}^{*}(M) $ not proportional to the interior conormal vector $ n_{x} $ at $ x\in \partial M, $ the map
\begin{equation}\label{twentysix}
M_{x, \xi}^{+} \rightarrow (G_1)_x \oplus \dotso \oplus (G_J)_x, u \mapsto (b_{1}(x, \xi + D_{t}n_{x}) u(0), \dotso, b_{J}(x, \xi + D_{t}n_{x})u(0)).
\end{equation}
\noindent is bijective, where
\begin{equation*}
M_{x,\xi}^{+} = \lbrace u \in \mathcal{C}^{\infty}(\mathbb{R}_{+}, E_{x}): p(x, \xi + D_{t}n_{x})u(t) = 0, 
\sup_{t \in \mathbb{R}^{+}}\left\lvert u(t) \right\rvert < \infty, \forall t \in \mathbb{R}_{+} \rbrace.
\end{equation*}
\noindent Here $ b_{i}, i = 1, \dotso, n $ are principal symbols of differential operators $ G_{i}, i = 1, \dotso, n $, respectively.
\end{definition}
\noindent In fact, all elements in $ M_{x, \xi}^{+} $ are  exponentially decreasing on $ \mathbb{R}_{+} $ \cite[Vol.~3, Ch.~20]{Hor2}. 
\begin{definition} The space $ \bar{H}^{s}(M^{\circ}, E) $ is defined to be
\begin{equation*}
\bar{H}^{s}(\mathring M, E) = \lbrace u \in \mathcal{C}^{-\infty}(M, E) : u |_{\mathring M} \in H^{s}(M, E) \rbrace.
\end{equation*}
\noindent Here $ \mathring M $ is the interior of the smooth manifold $ X $ with boundary.
\end{definition}


\noindent
The theorem of H\"ormander corresponds to the Fredholm property of $ D : \bar{H}^{s}(\mathring M, E) \rightarrow \bar{H}^{s - m}(\mathring X, F) $ in (\ref{twentyfive}) is given as follows.

\begin{theorem}\label{Thm 4.2} \cite[Vol. 3, Thm. 20.1.2]{Hor2} If the boundary problem (\ref{twentyfive}) is elliptic and $ s \geqslant m $, then
\begin{align*}
\bar{H}^{s}(\mathring M, E) \ni & u \mapsto (Du, B_{1}u, \dotso B_{J}u) \in \\
& \bar{H}^{s - m}(\mathring M, F) \oplus H^{s - m_{1} - \frac{1}{2}}(\partial M, G_{1}) \oplus \dotso \oplus H^{s - m_{J} - \frac{1}{2}}(\partial M, G_{J})
\end{align*}
\noindent is a Fredholm operator.
\end{theorem}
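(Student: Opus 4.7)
The plan is to establish the Fredholm property by constructing a two-sided parametrix for the boundary value operator $\mathcal{P} := (D, B_1, \ldots, B_J)$ modulo compact remainders. Since Fredholmness on compact manifolds reduces, by the standard abstract result, to producing operators $Q_{\text{int}}$ and $Q_{\partial}$ with $\mathcal{P} Q = I - K_1$ and $Q \mathcal{P} = I - K_2$ where $K_1, K_2$ are compact between the relevant Sobolev spaces, the entire proof is the construction of a parametrix respecting the boundary condition.

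First I would localize with a finite partition of unity $\{\chi_\alpha\}$ subordinate to a cover of $M$ by interior charts and boundary charts. On interior charts, the ellipticity of $D$ and the pseudodifferential parametrix of Theorem 4.1 (extended to variable coefficients and to systems on trivialized vector bundles) provide an operator $Q_{\text{int}}$ with $D Q_{\text{int}} \equiv I$ modulo smoothing operators. The work happens in boundary charts. There I would freeze coefficients at a point $x \in \partial M$, Fourier-transform in the tangential variables $\xi$, and reduce the model to the ODE $p(x,\xi+D_t n_x) u(t) = 0$ on $t\in \R_+$ coupled with the boundary conditions $b_j(x,\xi+D_t n_x) u(0) = g_j$. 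By the elliptic boundary condition (the bijectivity in (\ref{twentysix}) together with the exponential decay of elements of $M^+_{x,\xi}$), this ODE boundary value problem has a unique solution depending linearly on $(g_1,\ldots,g_J)$; the solution operator is the principal symbol of a boundary (Poisson) parametrix $Q_\partial$. Standard symbolic calculus, iterated order by order, upgrades this principal solution to a full parametrix $Q_\partial$ mapping boundary data in $\bigoplus_j H^{s-m_j-1/2}(\partial M, G_j)$ into $\bar H^s(\mathring M, E)$ modulo smoothing.

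Next I would glue $Q_{\text{int}}$ and $Q_\partial$ via the partition of unity into a global operator $Q$ and verify $\mathcal{P} Q = I - K_1$ and $Q \mathcal{P} = I - K_2$, where $K_1, K_2$ arise from commutators with $\chi_\alpha$ and from remainder terms of the symbolic constructions. Each remainder is a smoothing (or at least order $-\infty$ tangentially plus regular up to the boundary) operator, and hence factors through a compact Sobolev embedding on the compact manifold $M$; here the condition $s \geq m$ ensures that traces $B_j u$ are well defined in $H^{s-m_j-1/2}(\partial M, G_j)$ via the trace theorem, so everything takes place in the correct spaces. Compactness of $K_1$ and $K_2$ then yields that $\mathcal{P}$ is Fredholm.

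The main obstacle is the construction of the boundary parametrix $Q_\partial$ together with the verification that the remainders are genuinely compact rather than merely bounded. The delicate point is handling the transition from the constant-coefficient model on $\R^n_+$ to variable coefficients on $M$: one must build symbols in the Boutet de Monvel / transmission calculus (or use H\"ormander's direct approach with exponentially decaying Poisson kernels) so that tangential smoothness gains and normal-direction regularity combine to produce operators which, after composition with $\mathcal{P}$, land in spaces compactly embedded into $\bar H^{s-m}(\mathring M, F) \oplus \bigoplus_j H^{s-m_j-1/2}(\partial M, G_j)$. Once this machinery is in place, the Fredholm conclusion is a formal consequence.
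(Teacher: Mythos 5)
Your outline is correct, but note that the paper offers no proof of this statement at all: it is quoted directly from H\"ormander \cite[Vol.~3, Thm.~20.1.2]{Hor2}, and your sketch (interior pseudodifferential parametrix, boundary/Poisson parametrix built from the bijectivity of the boundary symbol map on $M^{+}_{x,\xi}$, gluing by a partition of unity, and compactness of the smoothing remainders on the compact manifold) is essentially the standard argument given in that cited source. So you are reproducing, in outline, the same approach as the reference the paper relies on, rather than diverging from it.
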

\noindent We can apply H\"ormander's method above to show that we can obtain an integration version of Dirichlet problem (\ref{Poisson manifold}) $ -\Delta_{g} u + F(u) = f, u \equiv 0 \; on \; \partial M $ on compact manifold with boundary by choosing an appropriate Riemannian metric and assuming some regularity assumption for inhomogeneous term. \\
\\
To start, let $ \lbrace U_{i} \rbrace $ be boundary charts of $ M $. We can assume that the defining function on $ U_{i} \cap \partial M $ are $ x_{n} = 0 $ in local coordinates, so locally within each boundary chart $ U_{i} $, we choose a Riemannian metric $ g $ which is of the form $ dx_{n}^{2} + h_{i} $ on each boundary chart $ U_{i} $ where $ h_{i} $ is a metric on $ U_{i} \cap \partial M $. We then consider (\ref{Poisson manifold}) on $ (M, g) $ when $ f \in \mathcal{C}^{\infty}(M) $ so the method in Section 3.2 concludes that there exists a smooth solution of $ (\ref{Poisson manifold}) $. It follows from the procedure and we conclude that we can construct a smooth sequence $ u_{k} $ from iterative steps (\ref{twelve}) which converges to $ u $ in $ H_{0}^{1}(M) $ - norm. 
\begin{proposition} Consider the Dirichlet problem
\begin{equation}\label{twentynine}
-\Delta_{g} u + F(u) = f, u \equiv 0 \; on \; \partial M,
\end{equation}
\noindent on a smooth, connected, compact Riemannian manifolds $ (M, g) $ with $ \mathcal{C}^{1} $ boundary $ \partial M $. Here $ F $ is a complex-valued globally Lipschitz function with Lipschitz constant $ C_{1} > 0 $, $ g $ is the Riemannian metric discussed above and $ f \in \mathcal{C}^{\infty}(M) $. There exists a generalized inverse $ Q : C^{\infty}(M) \rightarrow C^{\infty}(M) $ of $ -\Delta_{g} $ on $ M $ such that the equation
\begin{equation}\label{twentyeight}
u = Ru - QF(u) + Qf.
\end{equation}
\noindent holds. If in particular the Lipschitz constant $ C $ satisfies $ \lVert Q \rVert_{L^{2}(M) \rightarrow H^{2}(M)} C < 1 $, we have
\begin{equation*}
\lVert u \rVert_{H_{0}^{1}(M)} \leqslant C' \lVert f \rVert_{L^{2}(M)} + C_{N} \lVert u \rVert_{H^{-N}(M)}, \forall N \in \mathbb{Z}
\end{equation*}
\noindent with some positive constants $ C' $ and $ C_{N} $.
\end{proposition}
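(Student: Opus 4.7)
My plan is to mirror the proof of the Euclidean parametrix proposition, replacing the constant-coefficient parametrix from Theorem 4.1 with the generalized inverse furnished by H\"ormander's Fredholm theorem (Theorem \ref{Thm 4.2}) for the Dirichlet boundary problem on $(M,\partial M,g)$. In H\"ormander's notation we take $E = F = M\times\C$, $J = 1$, and $B_1 = \mathrm{tr}_{\partial M}$ with $G_1 = \partial M \times \C$ and $m_1 = 0$. The first task is to check that the boundary problem $(-\Delta_g, B_1)$ is elliptic in the sense of Definition 4.2. Interior ellipticity is clear, so only the Shapiro--Lopatinskii condition needs verification; here the product-form metric $dx_n^2 + h_i$ near $\partial M$ does real work. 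In those coordinates the principal symbol of $-\Delta_g$ at a boundary point $x$ with tangential covector $\xi$ becomes $|\xi|_{h_i}^2 + \tau^2$, so the ODE $p(x,\xi+D_tn_x)u(t) = 0$ reads $(|\xi|_{h_i}^2 - \partial_t^2)u = 0$. For $\xi\neq 0$ the bounded solutions on $\R_+$ are spanned by $e^{-|\xi|_{h_i}t}$, and the trace map $u\mapsto u(0)$ is a bijection onto $\C = (G_1)_x$. Thus (\ref{twentysix}) is bijective and the Dirichlet problem is elliptic.

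With ellipticity in hand, Theorem \ref{Thm 4.2} gives that $-\Delta_g$ with Dirichlet data is Fredholm $\bar{H}^s(\mathring M)\to \bar{H}^{s-2}(\mathring M)\oplus H^{s-1/2}(\partial M)$. Since $-\Delta_g$ on $H_0^1(M)$ has trivial kernel (Theorem 3.2) and is self-adjoint, Fredholmness plus standard parametrix construction on the interior patched with a boundary parametrix yields a generalized inverse $Q: \mathcal{C}^\infty(M)\to \mathcal{C}^\infty(M)$ with $Q(-\Delta_g) = I - R$ on $H_0^1(M)$, where $R$ is properly supported and smoothing, hence bounded $H^s(M)\to H^{s'}(M)$ for all $s,s'\in\R$. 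By compactness of $M$ and elliptic regularity, $Q$ is bounded $L^2(M)\to H^2(M)\cap H_0^1(M)$.

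Next I would pass to the integral equation. By the discussion preceding the proposition, for $f\in \mathcal{C}^\infty(M)$ the iterative sequence $\{u_k\}$ from (\ref{twelve}) is smooth and converges to $u$ in $H_0^1(M)$. Applying $Q$ to the strong iterative identity $-\Delta_g u_k + F(u_{k-1}) = f$ and using $Q(-\Delta_g)u_k = u_k - Ru_k$ yields
\begin{equation*}
u_k \;=\; R u_k \;-\; Q F(u_{k-1}) \;+\; Q f.
\end{equation*}
Since $Q:L^2\to H^2$ and $R:H^{-N}\to H^1_0$ are bounded, and $F$ is Lipschitz hence continuous $L^2\to L^2$, passing $k\to\infty$ gives the desired $u = Ru - QF(u) + Qf$. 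For the energy estimate, I take $H_0^1(M)$-norms of both sides and estimate
\begin{align*}
\lVert u\rVert_{H_0^1(M)}
&\leqslant \lVert R u\rVert_{H_0^1(M)} + \lVert Q F(u)\rVert_{H_0^1(M)} + \lVert Q f\rVert_{H_0^1(M)} \\
&\leqslant C_N \lVert u\rVert_{H^{-N}(M)} + \lVert Q\rVert_{L^2\to H^2}\, C_1 \lVert u\rVert_{L^2(M)} + C' \lVert f\rVert_{L^2(M)},
\end{align*}
using $F(0) = 0$ (after the usual reduction), the Lipschitz estimate (\ref{Thm 2.1 Lip-esti}) adapted to $M$, and the smoothing property of $R$. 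Since $\lVert u\rVert_{L^2(M)}\leqslant \lVert u\rVert_{H_0^1(M)}$ and $C_1\lVert Q\rVert_{L^2\to H^2}<1$ by hypothesis, the middle term may be absorbed into the left-hand side, giving the stated inequality for each $N\in\Z$.

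The main obstacle is the first step: verifying that the Dirichlet problem satisfies the H\"ormander ellipticity condition and extracting a genuine \emph{two-sided} generalized inverse $Q$ respecting the Dirichlet trace, as opposed to just a Fredholm parametrix modulo compact operators. This is exactly where the product-form metric $dx_n^2 + h_i$ near $\partial M$ is essential, since it reduces the normal operator to a one-dimensional exponential ODE whose bounded solution space pairs bijectively with the scalar Dirichlet trace. Once $Q$ is obtained, everything else is a direct translation of the Euclidean parametrix argument from Proposition 4.1, and the norm absorption argument is identical.
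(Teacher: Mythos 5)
Your proposal is correct and follows essentially the same route as the paper: verify H\"ormander's ellipticity condition for the Dirichlet problem by exploiting the product-form metric $dx_n^2 + h_i$ near $\partial M$, invoke Theorem \ref{Thm 4.2} to obtain the generalized inverse $Q$ with $Q(-\Delta_g) = I - R$ and its Sobolev mapping properties (both you and the paper defer to the literature at this point), apply $Q$ to the smooth iterates from (\ref{twelve}), pass to the $H_0^1$ limit to get $u = Ru - QF(u) + Qf$, and absorb the Lipschitz term under $C\lVert Q\rVert_{L^2\to H^2}<1$. The only divergence is in how the boundary ellipticity is phrased: you check the standard Shapiro--Lopatinskii condition with $B_1$ the trace operator, showing $u\mapsto u(0)$ is a bijection from the bounded solutions of $(\lvert\xi\rvert_{h_i}^2-\partial_t^2)u=0$ on $\R_+$ (spanned by $e^{-\lvert\xi\rvert_{h_i}t}$) onto $\C$, whereas the paper treats the Dirichlet operator as the zero map and argues instead that $M^{+}_{x,\xi}=\{0\}$ via the same exponential ODE; your formulation is the cleaner and more standard one, but both rest on the same product-metric computation and lead to the identical Fredholm and absorption arguments.
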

\begin{proof} Recall the proof in Theorem 3.4. With the higher regularity here, the sequence $ u_{k} \in \mathcal{C}^{\infty}(M) $ from iteration steps (\ref{twelve}) can be considered as smooth sections of the trivial bundle $ M \times \mathbb{C} \xrightarrow{\pi} M $ where $ M $ is a compact smooth manifold with boundary $ \partial M $. In iteration, we equivalently consider the PDE
\begin{equation}\label{twentyseven}
-\Delta_{g} u_{k} + F(u_{k-1}) = f, Bu_{k} \equiv 0, k \in \mathbb{Z}_{\geqslant 0}.
\end{equation}
\noindent Here $ B: \mathcal{C}^{\infty}(M, M \times \mathbb{C}) \rightarrow \mathcal{C}^{\infty}(\partial M, \partial M \times \mathbb{C}) $ is the trivial boundary operator, which corresponds to the Dirichlet boundary condition. We now check with Definition 4.4 that the Laplacian with Dirichlet boundary condition is elliptic as a boundary value problem. \\
\\
Since $ B $ is the trivial operator, the zero operator, it follow that the RHS of (\ref{twentysix}) is constantly zero, so we must show that under this situation, the set $ M_{x, \xi}^{+} $ contains only one trivial element, the zero element. It suffices to show that
\begin{equation*}
p(x, \xi + D_{t}n_{x}) u(t) = 0 \Rightarrow u(t) \equiv 0,
\end{equation*}
\noindent whose solutions are also exponentially decay in $ \mathbb{R}_{+} $, as Remark 4.4 suggested. The conormal direction here is $ n_{x} = (0, \dotso, 0, 1) $. The symbol of $ -\Delta_{g} $ is of the form $ Q(\xi_{1}, \dotso, \xi_{n-1}) + \xi_{n}^{2} $ where $ Q $ is a quadratic polynomial of the first $ n - 1 $ variables in cotangent bundle, since the Riemannian metric is of the form $ g = dx_{n}^{2} + h_{i} $ locally in each boundary chart. It follows that we are solving the following ODE
\begin{equation*}
Q(\xi_{1}, \dotso, \xi_{n-1})u(t) + (\xi_{n} -i\frac{d}{dt})^{2}u(t) = 0 \Rightarrow \frac{d^{2}u}{dt^{2}} + 2i\xi_{n} \frac{du}{dt} - (\xi_{n}^{2} + Q(\xi_{1}, \dotso, \xi_{n-1}))u = 0.
\end{equation*}
\noindent The solution of this ODE is given by
\begin{equation*}
u(t) = ae^{\frac{\sqrt{Q(\xi_{1}, \dotso, \xi_{n-1})}t - i\xi_{n}t}{2}} + be^{\frac{\sqrt{-Q(\xi_{1}, \dotso, \xi_{n-1})}t - i\xi_{n}t}{2}}, u(0) = 0 \in M_{x, \xi}^{+}.
\end{equation*}
\noindent where $ a, b $ are some constants related to initial condition. Since $ u(0) = 0 $, then it follows that either $ a = b \equiv 0 $ or $ a = -b \neq 0 $. In the former case, $ u(t) \equiv 0 $. The latter case is impossible, since the solution should be exponentially decaying. Hence the only exponentially decaying solution of this ODE is $ u(t) \equiv 0 $, and it follows that Definition 4.4 is satisfied for (\ref{twentyseven}), which is a linear equation for each fixed $k$. \\
\\
Since Definition 4.4 is satisfied, it follows from Theorem 4.2 that the map
\begin{equation*}
(-\Delta_{g}, B) : \bar{H}^{s}(\mathring M, M \times \mathbb{C}) \rightarrow \bar{H}^{s - 2}(\mathring M, M \times \mathbb{C}) \oplus H^{s - 2}(\partial M, \partial M \times \mathbb{C}),
\end{equation*}
\noindent in  (\ref{twentyseven}) are Fredholm operators, which implies that $ -\Delta_{g} $ has a parametrix $ Q $ such that $ -\Delta_{g} Q = Id - R $, $ Q (-\Delta_{g}) = Id - R' $ for some compact operators $ R, R' $ which are bounded between any two Sobolev spaces. \\
 \\
Due to the trivial Dirichlet boundary condition, \cite{RBM2} and \cite{RBM1} indicates that the parametrix $ Q $ of $ -\Delta_{g} $ here is a bounded linear operator $ Q : \bar{H}^{s}(M, M \times \mathbb{C}) \rightarrow \bar{H}^{s +2}(M, M \times \mathbb{C}) $, and in particular with (\ref{twentyseven}), $ Q $ is a map between $ \mathcal{C}^{\infty}(M, M \times \mathbb{C}) $. Analogous in Euclidean case, we observe that
\begin{equation*}
Q(-\Delta_{g})u_{k} + QF(u_{k-1}) = Qf \Rightarrow u_{k} - Ru_{k} + QF(u_{k-1}) = Qf.
\end{equation*}
\noindent The regularity condition assures that we can pass through the limit in the sense of $ H_{0}^{1} $ - norm and hence it follows that
\begin{equation*}
u = Ru - QF(u) + Qf.
\end{equation*}
\noindent Apply $ \lVert \cdot \rVert_{H_{0}^{1}(M)} $ norm on both sides of (\ref{twentyeight}) as well as the usual argument with respect to globally Lipschitz function as in Theorem 3.4, we observe that
\begin{align*}
& \lVert u \rVert_{H_{0}^{1}(M)} \leqslant \lVert R \rVert_{H_{0}^{1}(M) \rightarrow H^{-N}(M)} \lVert u \rVert_{H^{-N}(M)} \\
& \qquad + \lVert Q \rVert_{L^{2}(M) \rightarrow H^{2}(M)} \lVert C \rVert \lVert u \rVert_{H_{0}^{1}(M)} + \lVert Q \rVert_{L^{2}(M) \rightarrow H^{2}(M)} \lVert f \rVert_{L^{2}(M)}.
\end{align*}
\noindent If the hypothesis in the statement holds, we then shift the middle term above to LHS and finally conclude that
\begin{equation*}
\lVert u \rVert_{H_{0}^{1}(M)} \leqslant C' \lVert f \rVert_{L^{2}(M)} + C_{N} \lVert u \rVert_{H^{-N}(M)}.
\end{equation*}
\noindent Note that $ C_{N} $ depends on the choice of Sobolev space $ H^{-N}(M) $.
\end{proof}
\noindent Again, (\ref{twentyeight}) is an integration version of the PDE (\ref{twentynine}) with further assumptions of regularity of $ f $ and the Riemannian metric $ g $ stated above. 

\section*{Acknowledgement}
The author would like to thank his advisor Prof. Steven Rosenberg for his support and mentorship.

\bibliographystyle{plain}
\bibliography{Elliptic}

\end{document}